\newcommand{\Mdef}[2]{\newcommand{#1}{\relax \ifmmode #2 \else $#2$\fi}}
\newcommand{\supp}{\mathrm{supp}}
\newcommand{\im}{\mathrm{im}}
\newcommand{\thick}{\mathrm{thick}}
\newcommand{\sm }{\wedge}
\newcommand{\tensor}{\otimes}
\newcommand{\Hom}{\mathrm{Hom}}
\newcommand{\Ext}{\mathrm{Ext}}
\Mdef{\bhom}{\mathbf{\hat{H}om}}
\Mdef{\Mod}{\mathrm{mod}}
\newcommand{\st}{\; | \;}
\newtheorem{thm}{Theorem}[section]
\newtheorem{lemma}[thm]{Lemma}
\newtheorem{prop}[thm]{Proposition}
\newtheorem{cor}[thm]{Corollary}
\theoremstyle{definition}
\newtheorem{defn}[thm]{Definition}
\newtheorem{cond}[thm]{Condition}
\newtheorem{example}[thm]{Example}
\newtheorem{remark}[thm]{Remark}
\newcommand{\qqed}{\qed \\[1ex]}
\renewenvironment{proof}[1][\hspace*{-.8ex}]{\noindent {\bf Proof #1:\;}}{\qqed}
\Mdef{\PH} {\Phi^H}
\Mdef{\PK} {\Phi^K}
\Mdef{\PL} {\Phi^L}
\Mdef{\PT} {\Phi^{\T}}
\Mdef{\ef}{E{\cF}_+}
\Mdef{\etf}{\widetilde{E}{\cF}}
\Mdef{\eg}{E{G}_+}
\Mdef{\etg}{\tilde{E}{G}}
\newcommand{\piA}{\pi^{\cA}}
\Mdef{\infl}{\mathrm{inf}}
\Mdef{\defl}{\mathrm{def}}
\Mdef{\res}{\mathrm{res}}
\Mdef{\ind}{\mathrm{ind}}
\Mdef{\coind}{\mathrm{coind}}
\Mdef{\univ}{\mathcal{U}}
\Mdef{\Fp}{\mathbb{F}_p}
\Mdef{\Zpinfty}{\Z /p^{\infty}}
\Mdef{\Zpadic}{\Z_p^{\wedge}}
\newcommand{\bi}{\begin{itemize}}
\newcommand{\be}{\begin{enumerate}}
\newcommand{\bc}{\begin{center}}
\newcommand{\bd}{\begin{description}}
\newcommand{\ei}{\end{itemize}}
\newcommand{\ee}{\end{enumerate}}
\newcommand{\ec}{\end{center}}
\newcommand{\ed}{\end{description}}
\newcommand{\lra}{\longrightarrow}
\newcommand{\iso}{\cong}
\newcommand{\Gspectra}{\mbox{$G$-{\bf spectra}}}
\newcommand{\spec}{\mathrm{spec}}
\newcommand{\rings}{\mathbf{Rings}}
\Mdef{\we}{\mathbf{we}}
\Mdef{\fib}{\mathbf{fib}}
\Mdef{\cof}{\mathbf{cof}}
\Mdef{\BI}{\mathcal{BI}}
\newcommand{\cofibre}{\mathrm{cofibre}}
\newcommand{\colim}{\mathop{  \mathop{\mathrm {lim}} \limits_\rightarrow} \nolimits}
\newcommand{\hocolim}{\mathop{  \mathop{\mathrm {holim}}\limits_\rightarrow} \nolimits}
\Mdef{\B}{\mathbb{B}}
\Mdef{\C}{\mathbb{C}}
\Mdef{\D}{\mathbb{D}}
\Mdef{\E}{\mathbb{E}}
\Mdef{\T}{\mathbb{T}}
\Mdef{\F}{\mathbb{F}}
\Mdef{\G}{\mathbb{G}}
\Mdef{\I}{\mathbb{I}}
\Mdef{\N}{\mathbb{N}}
\Mdef{\Q}{\mathbb{Q}}
\Mdef{\R}{\mathbb{R}}
\Mdef{\bbS}{\mathbb{S}}
\Mdef{\Z}{\mathbb{Z}}
\Mdef{\bA}{\mathbb{A}}
\Mdef{\bB}{\mathbb{B}}
\Mdef{\bC}{\mathbb{C}}
\Mdef{\bD}{\mathbb{D}}
\Mdef{\bE}{\mathbb{E}}
\Mdef{\bF}{\mathbb{F}}
\Mdef{\bG}{\mathbb{G}}
\Mdef{\bH}{\mathbb{H}}
\Mdef{\bI}{\mathbb{I}}
\Mdef{\bJ}{\mathbb{J}}
\Mdef{\bK}{\mathbb{K}}
\Mdef{\bL}{\mathbb{L}}
\Mdef{\bM}{\mathbb{M}}
\Mdef{\bN}{\mathbb{N}}
\Mdef{\bO}{\mathbb{O}}
\Mdef{\bP}{\mathbb{P}}
\Mdef{\bQ}{\mathbb{Q}}
\Mdef{\bR}{\mathbb{R}}
\Mdef{\bS}{\mathbb{S}}
\Mdef{\bT}{\mathbb{T}}
\Mdef{\bU}{\mathbb{U}}
\Mdef{\bV}{\mathbb{V}}
\Mdef{\bW}{\mathbb{W}}
\Mdef{\bX}{\mathbb{X}}
\Mdef{\bY}{\mathbb{Y}}
\Mdef{\bZ}{\mathbb{Z}}
\Mdef{\cA}{\mathcal{A}}
\Mdef{\cB}{\mathcal{B}}
\Mdef{\cC}{\mathcal{C}}
\Mdef{\mcD}{\mathcal{D}} % Something funny about \cD.
\Mdef{\cE}{\mathcal{E}}
\Mdef{\cF}{\mathcal{F}}
\Mdef{\cG}{\mathcal{G}}
\Mdef{\mcH}{\mathcal{H}} % There's something funny about \cH: it 
\Mdef{\cI}{\mathcal{I}}
\Mdef{\cJ}{\mathcal{J}}
\Mdef{\cK}{\mathcal{K}}
\Mdef{\mcL}{\mathcal{L}}% There's something funny about \cL: it 
\Mdef{\cM}{\mathcal{M}}
\Mdef{\cN}{\mathcal{N}}
\Mdef{\cO}{\mathcal{O}}
\Mdef{\cP}{\mathcal{P}}
\Mdef{\cQ}{\mathcal{Q}}
\Mdef{\mcR}{\mathcal{R}}% There's something funny about \cR: it 
\Mdef{\cS}{\mathcal{S}}
\Mdef{\cT}{\mathcal{T}}
\Mdef{\cU}{\mathcal{U}}
\Mdef{\cV}{\mathcal{V}}
\Mdef{\cW}{\mathcal{W}}
\Mdef{\cX}{\mathcal{X}}
\Mdef{\cY}{\mathcal{Y}}
\Mdef{\cZ}{\mathcal{Z}}
\Mdef{\ca}{\mathcal{a}}
\Mdef{\ct}{\mathcal{t}}
\Mdef{\At}{\tilde{A}}
\Mdef{\Bt}{\tilde{B}}
\Mdef{\Ct}{\tilde{C}}
\Mdef{\Et}{\tilde{E}}
\Mdef{\Ht}{\tilde{H}}
\Mdef{\Kt}{\tilde{K}}
\Mdef{\Lt}{\tilde{L}}
\Mdef{\Mt}{\tilde{M}}
\Mdef{\Nt}{\tilde{N}}
\Mdef{\Pt}{\tilde{P}}
\Mdef{\tA}{\tilde{A}}
\Mdef{\tB}{\tilde{B}}
\Mdef{\tC}{\tilde{C}}
\Mdef{\tE}{\tilde{E}}
\Mdef{\tH}{\tilde{H}}
\Mdef{\tK}{\tilde{K}}
\Mdef{\tL}{\tilde{L}}
\Mdef{\tM}{\tilde{M}}
\Mdef{\tN}{\tilde{N}}
\Mdef{\tP}{\tilde{P}}
\Mdef{\ft}{\tilde{f}}
\Mdef{\xt}{\tilde{x}}
\Mdef{\yt}{\tilde{y}}
\Mdef{\Ab}{\overline{A}}
\Mdef{\Bb}{\overline{B}}
\Mdef{\Cb}{\overline{C}}
\Mdef{\Db}{\overline{D}}
\Mdef{\Eb}{\overline{E}}
\Mdef{\Fb}{\overline{F}}
\Mdef{\Gb}{\overline{G}}
\Mdef{\Hb}{\overline{H}}
\Mdef{\Ib}{\overline{I}}
\Mdef{\Jb}{\overline{J}}
\Mdef{\Kb}{\overline{K}}
\Mdef{\Lb}{\overline{L}}
\Mdef{\Mb}{\overline{M}}
\Mdef{\Nb}{\overline{N}}
\Mdef{\Ob}{\overline{O}}
\Mdef{\Pb}{\overline{P}}
\Mdef{\Qb}{\overline{Q}}
\Mdef{\Rb}{\overline{R}}
\Mdef{\Sb}{\overline{S}}
\Mdef{\Tb}{\overline{T}}
\Mdef{\Ub}{\overline{U}}
\Mdef{\Vb}{\overline{V}}
\Mdef{\Wb}{\overline{W}}
\Mdef{\Xb}{\overline{X}}
\Mdef{\Yb}{\overline{Y}}
\Mdef{\Zb}{\overline{Z}}
\Mdef{\db}{\overline{d}}
\Mdef{\hb}{\overline{h}}
\Mdef{\qb}{\overline{q}}
\Mdef{\rb}{\overline{r}}
\Mdef{\tb}{\overline{t}}
\Mdef{\ub}{\overline{u}}
\Mdef{\vb}{\overline{v}}
\Mdef{\hc}{\hat{c}}
\Mdef{\he}{\hat{e}}
\Mdef{\hf}{\hat{f}}
\Mdef{\hA}{\hat{A}}
\Mdef{\hH}{\hat{H}}
\Mdef{\hJ}{\hat{J}}
\Mdef{\hM}{\hat{M}}
\Mdef{\hP}{\hat{P}}
\Mdef{\hQ}{\hat{Q}}
\Mdef{\thetab}{\overline{\theta}}
\Mdef{\phib}{\overline{\phi}}
\Mdef{\uA}{\underline{A}}
\Mdef{\uB}{\underline{B}}
\Mdef{\uC}{\underline{C}}
\Mdef{\uD}{\underline{D}}
\Mdef{\bolda}{\mathbf{a}}
\Mdef{\boldb}{\mathbf{b}}
\Mdef{\bfD}{\mathbf{D}}
\Mdef{\fm}{\frak{m}}
\Mdef{\fp}{\frak{p}}
\Mdef{\eps}{\epsilon}
\newcommand{\spc}{\mathrm{Spc}}
\newcommand{\fGspecQ}{(\mbox{$G$-spec}/\Q)^c}
\newcommand{\unI}{\mathcal{I}_{un}}
\newcommand{\gI}{\mathcal{I}_{g}}
\newcommand{\Lcl}{\Lambda_{cl}}
\newcommand{\Lqst}{\Lambda_{ct}}
\newcommand{\basic}[1]{G_+\sm_{{#1}}e_{{#1}}S^0}
\newcommand{\elr}[1]{E\langle #1\rangle}
\newcommand{\sub}{\mathrm{Sub}}
\newcommand{\tthick}{\thick_{\tensor}}
\newcommand{\cEi}{\cE^{-1}}
\newcommand{\cOcF}{\cO_{\cF}}
\newcommand{\cOcFK}{\cO_{\cF /K}}
\newcommand{\FC}{FC}
\newcommand{\Nspectra}{\mbox{$N$-{\bf spectra}}}
\newcommand{\Tspectra}{\mbox{$T$-{\bf spectra}}}
\begin{document}
\title{Rational torus-equivariant stable homotopy IV: thick tensor
  ideals and the  Balmer spectrum for finite spectra} 

\author{J.P.C.Greenlees}
\address{School of Mathematics and Statistics, Hicks Building, 
Sheffield S3 7RH. UK.}
\email{j.greenlees@sheffield.ac.uk}
\date{}

\begin{abstract}
We classify thick tensor ideals of finite objects in the category of
rational torus-equivariant spectra, showing that they are completely
determined by geometric isotropy. 

This is essentially equivalent to showing that the Balmer spectrum is
the set of closed subgroups under cotoral inclusion. 
\end{abstract}

\thanks{I am grateful to B.Sanders for conversations about this work.}
\maketitle

\tableofcontents

\section{Introduction}
\subsection{Context}
We are interested in the structural properties of the category of
$G$-equivariant cohomology theories for a compact Lie group $G$. 
To start with, the category is tensor-triangulated, and it is the
homotopy category of the category of $G$-spectra. To see the broad
features of this category we restrict attention to cohomology theories
whose values are rational vector spaces:  a $G$-equivariant cohomology theory with
values in rational vector spaces  is represented by a $G$-spectrum with rational homotopy groups. 

 The category of rational $G$-spectra has a very rich structure. The case when $G$ is a torus
has been studied extensively \cite{tnq1, tnq2, AGs}, and there is a complete algebraic model 
for rational $G$-spectra \cite{tnqcore}.  In this paper we study the
structural features (as a tensor triangulated category) of the 
homotopy category of finite rational $G$-spectra when $G$ is a
torus. From now on the standing assumption is that all spectra are
rational and until Section \ref{sec:beyond} the group $G$ is a rank
$r$ torus. In Sections \ref{sec:beyond} and \ref{sec:toral} we
describe the consequences for more general groups $G$.

\subsection{Overview}
In fact, we give a classification of the finitely generated thick tensor ideals of finite rational
$G$-spectra for a torus $G$. This is in terms of traditional
invariants of transformation groups, namely fixed points and Borel
cohomology.  

Recall that the geometric isotropy of a $G$-spectrum $X$, 
$$\gI (X)=\{ K\st \Phi^KX\not \simeq_1 0\} $$
is the collection of closed subgroups $K$ for which the geometric fixed
points $\Phi^KX$ are non-equvariantly essential.  We say that $L$ is
{\em cotoral} in $K$ if $L$ is a normal subgroup of $K$ and $K/L$ is a
torus. 

\begin{thm} 
(i) If $X$ is a finite rational $G$-spectrum then 
then $\gI (X)$ is closed under passage to cotoral
subgroups and has only finitely many maximal elements. 

(ii) If $X$ and $Y$ are finite rational $G$-spectra with
 $\gI (X)=\gI (Y)$ then $X$ and $Y$ generate the same 
 thick tensor ideal. 
\end{thm}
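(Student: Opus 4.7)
\emph{Proof proposal.}

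\textbf{Part (i).} The cotoral closure of $\gI(X)$ is true for every $G$-spectrum, not only the finite ones. When $L$ is cotoral in $K$, the $G$-equivariant identity
\[
\Phi^K X \simeq \Phi^{K/L}\!\bigl(\Phi^L X\bigr),
\]
where $\Phi^L X$ is regarded as a $G/L$-spectrum and $K/L\subseteq G/L$, shows that $\Phi^L X\simeq_1 0$ forces $\Phi^K X\simeq_1 0$; contrapositively, $K\in\gI(X)$ implies $L\in\gI(X)$. For the finiteness statement I would use that $X$ is built from finitely many cells of the form $G_+\wedge_{H_i}S^{n_i}$ and that $\gI(-)$ is sub-additive on cofibre sequences; since $\gI(G/H_{i,+})=\sub(H_i)$, one has $\gI(X)\subseteq \bigcup_i\sub(H_i)$. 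Any maximal element of $\gI(X)$ therefore lies in $\sub(H_i)$ for some $i$, and within a fixed $\sub(H_i)$ the cotoral-closed subsets are closed in a Noetherian topology on the subgroup lattice (ultimately coming from the algebraic model of \cite{tnqcore}), so they have only finitely many maximal elements. Summing over the finitely many $H_i$ gives the claim.

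\textbf{Part (ii).} For the classification of thick tensor ideals the plan is a double induction: outer induction on the rank $r$ of $G$, inner induction on the number of maximal elements of $\gI(X)=\gI(Y)$. The key geometric input is the isotropy-separation sequence $\ef\to S^0\to\etf$ associated to a family $\cF$ of subgroups. Smashing with $X$ gives a cofibre decomposition
\[
X\wedge\ef \longrightarrow X \longrightarrow X\wedge\etf
\]
whose two ends have disjoint geometric isotropy: $\gI(X\wedge\ef)=\gI(X)\cap\cF$ and $\gI(X\wedge\etf)=\gI(X)\setminus\cF$. Choosing $\cF$ so that it contains all but one of the maximal elements of $\gI(X)$ splits $X$ (up to the thick tensor ideal operations) into a piece with one maximal element and a piece with strictly fewer. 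Doing the same for $Y$ with the same $\cF$ and invoking the inner inductive hypothesis on each piece reduces (ii) to the case $\gI(X)=\gI(Y)=\sub(H)$ for a single closed subgroup $H$.

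For this base case I would identify a canonical generator of the thick tensor ideal of finite spectra with $\gI\subseteq \sub(H)$ — namely the cell $G/H_+$ (or a small twist thereof) — and then show via the algebraic model of \cite{tnqcore} that any finite spectrum $X$ with $\gI(X)=\sub(H)$ corresponds to a perfect complex of modules over the local ring of the model at $H$; such a complex is built from the free module by finitely many cofibres and retracts, which translates to $X\in\langle G/H_+\rangle_{\otimes\text{-thick}}$ and, symmetrically, to $G/H_+\in\langle X\rangle_{\otimes\text{-thick}}$. The outer induction on $r$ handles the residual $G/L$-equivariant problems that appear when $H\subsetneq G$, via the identification $\Phi^L(-)$ landing in $G/L$-spectra.

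\textbf{Main obstacle.} The hardest step is the inductive separation: verifying that smashing with $\ef$ and $\etf$ genuinely produces finite spectra with exactly the expected geometric isotropy, and that the cofibre pieces of $X$ and $Y$ obtained from the \emph{same} family $\cF$ still have equal geometric isotropy so the inductive hypothesis applies. This compatibility of $\Phi^K$, the family cofibre sequences, and the smash product is clean rationally but must be made precise; moreover, the identification of the thick tensor ideal with a fixed $\gI$ as being generated by a single $G/H_+$ is the point where the algebraic model of rational torus-equivariant spectra is indispensable.
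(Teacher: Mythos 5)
Your proposal contains several genuine gaps, one of which is a substantial error in the first part.

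\textbf{Cotoral closure in (i).} Your claim that cotoral closure holds for \emph{every} $G$-spectrum is false, and the argument you give does not go through. The identity $\Phi^K X \simeq \Phi^{K/L}(\Phi^L X)$ is correct, but knowing that $\Phi^L X$ is \emph{underlying}-trivial (i.e.\ $\Phi^1(\Phi^L X) \simeq 0$) does not tell you that $\Phi^{K/L}(\Phi^L X) \simeq 0$; the higher geometric fixed points of a non-equivariantly contractible spectrum need not vanish. For example, take $G = T$ the circle, $\cF$ the family of finite subgroups, and $X = \etf$: then $\Phi^1 X \simeq 0$ but $\Phi^T X \simeq S^0$. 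So $T \in \gI(X)$ while $1 \notin \gI(X)$, and $\gI(X)$ is not closed under cotoral specialization. The implication you want is precisely the content of the Borel--Hsiang--Quillen Localization Theorem, which is what requires $X$ to be finite — this is exactly what Lemma \ref{lem:fpBorel} and Proposition \ref{prop:gIctclosed} supply. You have inadvertently assumed the thing you need to prove.

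\textbf{Finiteness in (i).} Building $X$ from classical cells $G_+\wedge_{H_i} S^{n_i}$ gives $\gI(X)\subseteq \bigcup_i \Lcl(H_i)$, but to conclude that $\gI(X)$ has finitely many cotorally maximal elements you would need to know that each $\Lcl(H_i)$ does. You invoke a ``Noetherian topology'' without justification, and this is not trivially Noetherian (the poset of closed subgroups of a torus under cotoral inclusion has infinite antichains, such as all finite subgroups). The paper sidesteps this entirely by building $X$ from \emph{basic cells} $\sigma_K$, each with $\gI(\sigma_K)=\Lqst(K)$ having the single cotorally maximal element $K$ (Lemmas \ref{lem:gisigmaK}, \ref{lem:constructfrombasicingi}, \ref{lem:finmax}). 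With basic cells, a finite complex using cells $\sigma_{K_1},\dots,\sigma_{K_N}$ immediately has $\gI(X)\subseteq\bigcup_i\Lqst(K_i)$ with at most $N$ maximal elements.

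\textbf{Part (ii).} Your isotropy-separation induction has the structural flaw you yourself flag as the ``main obstacle,'' and it is fatal as stated: $X\wedge\etf$ and $X\wedge\ef$ are typically \emph{not finite} (compact) $G$-spectra, so the inductive step leaves the category you are classifying thick tensor ideals inside. The paper takes a different route that stays entirely inside finite spectra: it shows directly that if $K$ is maximal in $\gI(X)$ then $\sigma_K \in \tthick(X)$ (Theorem \ref{thm:sigmaKintthick}), by using the Adams spectral sequence in the algebraic model $\cA(G)$ to cancel $K$-isotropy with finitely many copies of $\sigma_K$, and conversely that $X\in\tthick(\sigma_{K_1},\dots,\sigma_{K_N})$ (Lemma \ref{lem:tthicksigmaK}). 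Your proposed base-case generator $G/H_+$ is also wrong: for a finite spectrum whose $\gI$ has unique maximal element $K$, that geometric isotropy is $\Lqst(K)$, not $\Lcl(K)$, so the correct generator is $\sigma_K$, not $G/K_+$ (whose geometric isotropy is strictly larger when $K$ is not connected). The distinction between the cotoral closure $\Lqst$ and the classical closure $\Lcl$, mediated by the basic cells, is the organizing insight of the paper and is missing from your proposal.
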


The closure under passage to cotoral subgroups comes
from the classical Borel-Hsiang-Quillen Localization Theorem. The
rest requires more detailed analysis.

One may also reformulate this in terms of the Balmer spectrum of the
category of rational $G$-spectra. Indeed, the Balmer primes are in 1:1 
correspondence with  the closed
subgroups, containment of primes  corresponds  precisely to cotoral
inclusion and the Balmer support is precisely the geometric isotropy. 

It  is a remarkable vindication of the Balmer spectrum that for the 
groups  considered so far  it  captures the space of subgroups
and  cotoral inclusions,  and even  the f-topology of \cite{ratmack}. This can be put 
down to the fact that both the Balmer spectrum and  the analysis of
rational $G$-spectra are  principally based on the Localization theorem and the
calculation of the rational Burnside ring. For some of the
more delicate arguments it is convenient to use  the very structured packaging of these 
ingredients used in the algebraic model of rational
$G$-spectra \cite{tnq1, tnq2, tnqcore}.

The outline of  the argument is as follows. First, we note that
rationally all {\em natural} cells $G/K_+$ are finitely built from
certain {\em basic}
cells $\sigma_K$ (\cite{tnq1}, see Subsection \ref{subsec:basic}). Any $X$ can be built from basic cells $\sigma_K$ 
for $K$ in the cotoral closure of the geometric isotropy of $X$. This shows that the thick
tensor ideal corresponding to any
admissible collection of subgroups $A$ is generated by the wedge of 
$\sigma_K$ with $K$ maximal in $A$. To complete the proof one must
show that the thick tensor ideal generated by $X$ contains $\sigma_K$ 
for $K$ maximal in the geometric isotropy of $X$; this is the point
where use of the algebraic model plays a central role. 

\subsection{Models}
We are interested in  the tensor-triangulated homotopy category of rational $G$-spectra for a torus $G$, 
and in the tensor-triangulated derived category of an abelian category $\cA (G)$. 

A Quillen equivalence 
$$\mbox{$G$-spectra/$\Q$}\simeq d \cA (G)$$
 is proved in \cite{tnqcore}, where $d\cA (G)$ consists of
 differential graded objects of $\cA (G)$. It follows that there is a triangulated equivalence
$$Ho(\mbox{$G$-spectra/$\Q$})\simeq  D(\cA (G)). $$
For the circle group the Quillen equivalence is shown to be monoidal in \cite{BGKS}, so the triangulated equivalence
preserves the tensor  structure. This is expected to be the case for a general torus $G$, but that is work in 
progress. 

For the present therefore, we need separate treatments for 
 the two tensor-triangulated categories $Ho (\mbox{$G$-spectra/$\Q$})$
 and $D(\cA (G))$. This is irritating, but the arguments we will give
apply equally well to both examples: they  
 only require the existence of a homology functor from the tensor
 triangulated category to $\cA (G)$, and the existence of an Adams
 spectral sequence based on it.

Because of this distinction, we have broken the argument into two parts
for purely expository reasons.  In the first part (Sections \ref{sec:locthm} and \ref{sec:basiccells})
it is easy to give the proof in the conventional language of $G$-spectra, so we give it before 
describing the category $\cA (G)$ in Section \ref{sec:AG}. The
transcription of the first part from the homotopy category of $G$-spectra to  $D(\cA (G))$ is immediate. 
In the second part (Sections \ref{sec:giforfinite} to \ref{sec:exhaust}) it is convenient to use the formal structure of 
$\cA (G)$ so we give the argument in $D(\cA (G))$. Nonetheless, the arguments with the {\em abelian} category 
$\cA (G)$ apply simultaneously to the homology of an object of $d\cA (G)$ and to $\piA_*(X)$ for a $G$-spectrum $X$,
and the triangulated arguments apply to the equivalent categories. The tensor arguments can be conducted 
in either one  of the two structures. 

Readers who feel nervous riding two horses may wish to  fix their attention on $\cA (G)$.
 
\subsection{Contents}

In Section \ref{sec:locthm} we recall some ideas from the classical
theory of transformation groups. In Section \ref{sec:basiccells} we
introduce a  structure of cells and isottropy for studying
rational spectra efficiently.  In Section \ref{sec:AG} we briefly
outline the algebraic model for rational $G$-spectra from \cite{tnq1,
  tnq2, AGs}. Section \ref{sec:giforfinite} is the core of the paper, and
contains the key facts showing that all thick tensor ideals are
determined by the basic cells in them.  In Section \ref{sec:primes} we
recall the definitions of Balmer primes, and show that from this point
of view, the work so far has been a theory of support. In Section
\ref{sec:exhaust} we use the classification of finitely generated
thick tensor ideals to show that all primes are the obvious ones
corresponding to vanishing of geometric fixed points at a closed
subgroup.

We then turn to complementary results. In Section \ref{sec:semifree}
we show that how great an effect the ideal condition has; whereas
the unit $S^0$ gives everything as a thick tensor ideal, we identify 
explicitly the proper thick subcategory it generates in the category of
finite semifree $T$-spectra. Finally, we turn to some
other groups.  In Section \ref{sec:beyond} we deduce from the
case of the circle what happens for $O(2)$ and $SO(3)$, and in Section
\ref{sec:toral} we consider toral spectra for  general groups $G$, and in
particular show that the Balmer spectrum is obtained from that of the
maximal torus by passing to the quotient under the Weyl group action
(the Going Up  and Going Down arguments may be of interest in
themselves).

\subsection{Notation}
Given a partially ordered set ({\em poset}) $X$ and a subset
$A\subseteq X$, we write $\Lambda_{\leq}(A)=\{ b\in X\st b\leq a\in A\}$
for the downward closure of $A$. 

We will consider two partial
orderings on the set of all closed subgroups of a compact Lie group $G$.
We indicate containment of subgroups by $K\subseteq H$, and refer to
this as the {\em classical} ordering. Accordingly $\Lcl (K)$ consists
of all closed subgroups of $K$. The more signifcant ordering in this
paper  is
that of  {\em cotoral inclusion}. We write $K\leq H$
if $K$ is normal in $H$ and $H/K$ is a torus. The importance of this
ordering arises from the Localization Theorem. The set 
$\Lqst (K)$ consists of all closed  subgroups cotoral in $K$.

Given an object $X$ of a triangulated category, we write $\thick (X)$
for the thick subcategory generated by $X$ (i.e., the set of objects
that can be built from $X$ by completing triangles and taking
retracts).  We write $\tthick (X)$
for the thick tensor ideal generated by $X$ (i.e., the set of objects
that can be built from $X$ by completing triangles, tensoring with an
arbitrary object and taking retracts). 

\subsection{Conventions}
 All homology and cohomology will be reduced and have rational coefficients. 

A $G$-equivalence of $G$-spectra will be denoted $X\simeq Y$. A non-equivariant
equivalence of $G$-spectra will be denoted $X\simeq_1 Y$ for emphasis.  

The natural extension to $G$-spectra   of the $H$-fixed point functor on  based  $G$-spaces 
is the geometric $H$-fixed point functor:
$$\Sigma^{\infty}(P^H)\simeq \Phi^H(\Sigma^{\infty}P). $$
Since we routinely omit the suspension spectrum functor, we may write $\Phi^HP$ for the $H$-fixed
point space of a based space $P$ to  avoid ambiguity. 

\section{The Localization Theorem}
\label{sec:locthm}

We revisit some basic facts from transformation groups in our
language. The basic tool is Borel
cohomology, $H^*_G(X):=H^*(EG\times_G X, EG\times_G pt)=H^*(EG_+\sm_GX)$.

The idea is that for finite spectra, geometric isotropy is
determined by Borel cohomology. It then follows from the Localization
Theorem that the geometric isotropy is closed under passage to cotoral
subgroups.

\begin{lemma}
\label{lem:fpBorel}
If $K$ is a torus and $X$ is a  finite $K$-CW-complex, then $X$ is non-equivariantly contractible if and
only if $H^*_K(X)=0$. 
\end{lemma}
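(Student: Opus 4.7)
The plan is to exploit the Borel fibration $X \to EK \times_K X \to BK$, using that $BK$ is simply connected since $K$ is a torus.  The easy direction is a direct verification: a non-equivariant rational contraction $X \simeq_1 \ast$ induces a rational equivalence $EK \times_K X \to EK \times_K \mathrm{pt} = BK$, so
\[
H^*_K(X) \;=\; H^*(EK \times_K X,\ BK) \;=\; 0.
\]

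For the converse, I would start from the long exact sequence of the pair $(EK \times_K X,\ BK)$: the hypothesis $H^*_K(X) = 0$ forces the basepoint section $BK \hookrightarrow EK \times_K X$ to be a rational cohomology isomorphism, hence, since it splits the Borel projection, an isomorphism $H^*(BK) \xrightarrow{\cong} H^*(EK \times_K X)$ of $H^*(BK)$-algebras.  I would then invoke the Eilenberg--Moore spectral sequence of this fibration,
\[
E_2^{*,*} \;=\; \Tor^{*,*}_{H^*(BK)}\!\bigl(H^*(EK \times_K X),\,\Q\bigr) \;\Longrightarrow\; H^*(X;\Q),
\]
which converges strongly because $BK$ is simply connected and $X$ has finite type.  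Substituting $H^*(EK \times_K X) \cong H^*(BK)$, a rank-one free module over itself, collapses the $\Tor$ to $\Q$ in total degree zero, so $\tilde H^*(X;\Q) = 0$, i.e.\ $X \simeq_1 \ast$ rationally.

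The main technical point is the strong convergence of the Eilenberg--Moore spectral sequence, but the standard sufficient hypotheses --- simply connected base and a finite-type fiber --- are both satisfied here.  An alternative closer in spirit to the paper's reliance on the Borel--Hsiang--Quillen localization theorem would proceed by induction on $\dim K$: the circle case is handled directly by exploiting the $\Q[t]$-freeness of $H^*(X_{hS^1})$ in the Serre spectral sequence, and the general torus case then reduces to a subcircle via a splitting $K \cong S \times K'$ together with localization applied to the $S$-fixed points.  I expect the Eilenberg--Moore route to be the cleaner of the two.
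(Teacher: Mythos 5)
Your proof is correct and takes essentially the same route as the paper: the easy direction is propagated through the Borel fibration, and the converse uses the Eilenberg--Moore machinery for $X \to EK \times_K X \to BK$ (the paper phrases it as the derived tensor product $C^*(X) \simeq C^*(EK\times_K X)\otimes_{C^*(BK)}\Q$; you run the $E_2$-page $\Tor$ spectral sequence, but these are the same tool).
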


\begin{proof}
First, if $X$ is simply connected,  the Hurewicz theorem shows $X\simeq *$ if
and only if $H_*(X)=0$. Since $X$ is finite, that is equivalent to
$H^*(X)=0$. 

Next, we have a fibration $X\lra EK\times_K X \lra BK$ so the Serre spectral
sequence shows that if $H^*(X)=0$ then also $H_K^*(X)=0$. Conversely,
with unreduced cochains the Eilenberg-Moore theorem gives
$$C^*(X)\simeq C^*(EK\times_K X)\tensor_{C^*(BK)} \Q,   $$
(where the tensor product is derived). This shows that $H^*_K(X)=0$ implies $H^*(X)=0$. 
\end{proof}

It follows that $\gI (X)$ can be detected from Borel cohomology of
fixed points. 

\begin{cor}
\label{cor:fpBorel}
If $X$ is finite, $K\in \gI (X)$ if and only if
$H^*_{G/K}(\Phi^KX)\neq 0$.\qqed
\end{cor}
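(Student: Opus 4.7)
The plan is to recognize this corollary as a direct application of Lemma \ref{lem:fpBorel} to the geometric fixed point spectrum $\Phi^K X$, viewed as a $G/K$-spectrum.

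First I would check that the hypotheses of the Lemma apply to $\Phi^K X$. Since $G$ is a torus and $K$ is a closed subgroup, the quotient $G/K$ is a compact connected abelian Lie group, hence itself a torus, so we are in the setting of Lemma \ref{lem:fpBorel}. Next, because $X$ is a finite $G$-spectrum, $\Phi^K X$ is a finite $G/K$-spectrum, and can be represented up to equivalence as a desuspension of a finite $G/K$-CW-complex. The conclusion of the Lemma extends from complexes to spectra in the evident way: non-equivariant contractibility and vanishing of Borel cohomology are each invariant under (de)suspension, since $H^*$ is already reduced and both the Serre spectral sequence argument and the Eilenberg--Moore argument in the proof of Lemma \ref{lem:fpBorel} apply equally to finite spectra.

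With this in hand, the corollary is essentially a tautology. By definition $K \in \gI(X)$ means $\Phi^K X \not\simeq_1 0$, i.e.\ $\Phi^K X$ is non-equivariantly non-contractible. By Lemma \ref{lem:fpBorel} applied to the finite $G/K$-spectrum $\Phi^K X$, this is equivalent to $H^*_{G/K}(\Phi^K X) \neq 0$, which is exactly the statement.

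The only mild obstacle is the spectrum-vs-space formulation of Lemma \ref{lem:fpBorel}; I would address this with a brief remark noting that the Lemma's proof goes through unchanged for finite $G/K$-spectra (using that a finite spectrum is a desuspension of a finite pointed CW-complex and that both conditions in the Lemma are stable under suspension). No further input is needed.
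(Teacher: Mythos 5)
Your proof is correct and is exactly what the paper has in mind: the corollary (left without a written proof, indicated by the terminal box) is the immediate application of Lemma \ref{lem:fpBorel} to the finite $G/K$-spectrum $\Phi^K X$, with $G/K$ again a torus. Your added remark that the lemma extends from finite CW-complexes to finite spectra by desuspension is a reasonable point of care that the paper leaves implicit.
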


For us the fundamental fact is the following consequence of the Localization Theorem. 

\begin{prop}
\label{prop:gIctclosed}
If $X$ is finite then 
$\gI (X)$ is closed under passage to
cotoral subgroups. 
\end{prop}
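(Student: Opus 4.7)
The plan is to prove the contrapositive: assume $L$ is cotoral in $K$ and $L\notin \gI(X)$, so $\Phi^L X\simeq_1 0$, and deduce $K\notin \gI(X)$. Set $T=K/L$, which is a torus by the cotoral hypothesis. Since $G$ is abelian, $L$ is normal in $G$, so $\Phi^L X$ is a finite $G/L$-spectrum and $T$ sits as a subgroup of $G/L$; restricting along this inclusion makes $\Phi^L X$ into a finite $T$-spectrum. Because $\Phi^L X\simeq_1 0$, Lemma~\ref{lem:fpBorel} applied to the torus $T$ gives $H^*_T(\Phi^L X)=0$.

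Now invoke the classical Borel--Hsiang--Quillen Localization Theorem for the torus $T$ acting on the finite $T$-spectrum $\Phi^L X$: the inclusion of geometric $T$-fixed points induces an isomorphism
$$H^*_T(\Phi^L X)[S^{-1}] \cong H^*_T(\Phi^T(\Phi^L X))[S^{-1}],$$
where $S$ is the multiplicative system generated by Euler classes of non-trivial characters of $T$. Since $\Phi^T(\Phi^L X)=\Phi^K X$ and the left hand side vanishes, we obtain $H^*_T(\Phi^K X)[S^{-1}]=0$. As $T$ acts trivially on $\Phi^K X$, rational K\"unneth identifies $H^*_T(\Phi^K X)$ with $H^*(BT)\otimes H^*(\Phi^K X)$; because $H^*(BT)[S^{-1}]$ is nonzero, this forces $H^*(\Phi^K X)=0$, whence $\Phi^K X\simeq_1 0$ and $K\notin \gI(X)$, as required. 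Equivalently, one may apply Corollary~\ref{cor:fpBorel} directly in the last step.

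The only potentially delicate point is invoking the Localization Theorem on a finite $T$-spectrum rather than on a $T$-space; working rationally, however, the Borel cohomology of such a spectrum is a finitely generated module over the Noetherian polynomial ring $H^*(BT)$, and the classical argument transcribes verbatim. Beyond that, the argument is essentially a bookkeeping exercise that leverages (i) the hypothesis that $K/L$ is a torus (to apply the Localization Theorem), (ii) the torality of $G$ (to guarantee $L\trianglelefteq G$ and hence that $\Phi^L X$ is a $G/L$-spectrum), and (iii) finiteness of $X$ (inherited by $\Phi^L X$ and $\Phi^K X$, which is needed for both Lemma~\ref{lem:fpBorel} and the Localization Theorem).
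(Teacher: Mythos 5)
Your proof is correct and takes essentially the same approach as the paper: both apply the Borel--Hsiang--Quillen Localization Theorem for the torus $K/L$ acting on $\Phi^L X$ together with Lemma~\ref{lem:fpBorel} (detecting non-equivariant contractibility via Borel cohomology), yours merely phrased as the contrapositive.
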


\begin{proof}
The Localization Theorem states that if $K$ is a torus and $X$ is a
finite $K$-CW-complex then 
$$H^*_K(X)\lra H^*_K(\Phi^K X )=H^*(BK)\tensor H^*(\Phi^K X)$$
becomes an isomorphism when the multiplicatively closed set $\cE_K =\{
e(W)\st W^K=0\} $ of Euler classes $e(W)\in H^{|W|}(BK)$ is
inverted. The proof uses the fact that the space
$$S^{\infty V(K)}=\bigcup_{W^K=0}S^W, $$
has $H$-fixed points $S^0$ if $H\supseteq K$ and contractible
otherwise. Accordingly, we have a $G$-equivalence
$$X\sm S^{\infty V(K)}\simeq \Phi^KX \sm S^{\infty V(K)}. $$ 

 It follows that if
$H^*(\Phi^KX)\neq 0$ then also $H^*_K(X)\neq 0$. The conclusion
follows from Lemma \ref{cor:fpBorel}.  
\end{proof}

\section{Basic cell complexes}
\label{sec:basiccells}
A distinctive feature of working rationally is that there are many
idempotents in the rational Burnside ring of a finite group. We follow
through the implications of this for cell structures.

Integrally, the relevant cells are homogeneous spaces
$G/K_+$, and the relevant ordering of subgroups is classical containment. 
Rationally, the splitting of Burnside rings means that the
relevant cells are certain basic cells $\sigma_K$ (a retract of
$G/K_+$ introduced below) and the
relevant ordering of subgroups is cotoral inclusion. 

We begin by running through one approach to equivariant cell
complexes, and then introduce basice cells and  follow the same pattern to give the rational
analysis in terms of basic cells.

\subsection{Unstable classical recollections}
Classically, we are used to the idea that based $G$-spaces $P$ are formed from
cells $G/K_+$. The classical unstable  isotropy is defined by 
$$\unI' (P)=\{ K \st P^K \neq
pt\}; $$
 it is not homotopy invariant, but it does have the obvious
property that it is closed under passage to subgroups. 

It is natural to move to a homotopy invariant notion 
$$\unI (P)=\{ K \st  P^K \not \simeq  *\} . $$ 
This notion fits well with the cells we use, since
$$\unI (G/K_+)=\{ L \st L\subseteq K\} =\Lcl(K). $$
Note that we are linking  notions of cell and isotropy with
a partial order on subgroups.

The homotopy invariant version of unstable isotropy may not be closed under passage to subgroups, so that we only
know that $X$ is equivalent to a complex constructed from cells
$G/K_+$ with $K\in \Lcl\unI (T)$, where $\Lcl$
indicates that we take the closure under the classical order (i.e.,
under containment). This can be proved by killing homotopy groups, or
by the method described in the next subsection for the stable
situation. 

\subsection{Stable classical recollections}
Moving to the stable world, for a $G$-spectrum $X$ we have the
stable isotropy
$$\gI (X)=\{ K \st  \Phi^K X \not \simeq_1  0\} , $$
homotopy invariant by definition. Evidently since geometric fixed
points extend ordinary fixed points on spaces,  
$$\unI (P) \supseteq \gI (\Sigma^{\infty}P), $$
and if $P$ can be constructed from cells in $A$ then $\Sigma^{\infty}P
$ can be constructed from stable cells in $A$.

\begin{remark}
In \cite{assiet} and the author's subsequent work this was called {\em
  stable isotropy} to emphasize that stability is the main change from
$\unI$.  The corresponding notion for
categorical fixed points does not seem to be useful, so this caused no
confusion. 

The name of `geometric isotropy' from \cite{HHR} seems to have
acquired currency, and the symmetry between `stable' and `unstable'
does not seem sufficient to overturn this advantage. 
\end{remark}

The attraction of geometric isotropy and geometric fixed points arises from 
the fact that their properties are familiar from the category of based spaces. 
Perhaps the most important instance is that of the Geometric Fixed Point Whitehead 
Theorem, and we state it here because it is fundamental to our approach. The result is
well known to all users of geometric fixed points, and is usually deduced
using isotropy separation to see that that an equivalence in all geometric fixed 
points is an equivalence in all categorical fixed points. 

\begin{lemma} {\em (Geometric Fixed Point Whithead Theorem)}
A map $f: X\lra Y$ of $G$-spectra is an equivalence if $\Phi^Kf: \Phi^KX\lra \Phi^KY$ 
is a non-equivariant equivalence for all closed subgroups $K$.\qqed
\end{lemma}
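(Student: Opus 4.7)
The plan is to pass to the cofibre $Z=\cofibre(f)$; exactness of $\Phi^K$ on triangles translates the hypothesis into $\Phi^K Z\simeq_1 0$ for every closed $K\leq G$, so the goal reduces to showing $Z\simeq 0$. Since the orbits $\{G/K_+\}_K$ form a set of compact generators of the $G$-equivariant stable category, this is equivalent to showing $\pi^K_*(Z)=0$ for every closed subgroup $K\leq G$, and I will extract this from the geometric hypothesis via isotropy separation, as hinted at in the excerpt.

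Fix $K$ and let $\cF[K]$ denote the family of closed subgroups of $G$ not containing a conjugate of $K$. Smashing $Z$ with the universal cofibre sequence $E\cF[K]_+\lra S^0\lra \widetilde{E}\cF[K]$ gives a triangle
$$ E\cF[K]_+\sm Z\lra Z\lra \widetilde{E}\cF[K]\sm Z $$
of $G$-spectra. By the very definition of geometric fixed points, the $K$-fixed points of the rightmost term compute $\Phi^K Z$, which is $\simeq_1 0$ by hypothesis, so this term contributes nothing to $\pi^K_*(Z)$. For the leftmost term, recall that $E\cF[K]_+$ admits a $G$-CW structure with cells $G/L_+$, $L\in\cF[K]$; the tom Dieck splitting, applied layer-by-layer, then expresses $\pi^K_*(E\cF[K]_+\sm Z)$ as a sum of terms
$$ \pi_*\bigl(E(W_K H)_+\sm_{W_K H}\Phi^H Z\bigr) $$
indexed by conjugacy classes of closed $H\leq K$ with $H\in\cF[K]$. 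In particular every such $H$ is strictly contained in $K$, and only the summands with $H\in\cF[K]$ survive because $\Phi^H(E\cF[K]_+)=S^0$ precisely on that range.

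The main obstacle is that these Borel constructions depend on $\Phi^H Z$ as a $W_G H$-spectrum, whereas the hypothesis delivers only non-equivariant vanishing. The standard remedy is iteration: for $J\leq W_G H$ with preimage $\widetilde J$ in $N_G H$, one has $\Phi^J(\Phi^H Z)\simeq \Phi^{\widetilde J}Z\simeq_1 0$, so $\Phi^H Z$ satisfies the same hypothesis as $Z$ but now regarded as a $W_G H$-spectrum. A Noetherian induction on closed subgroups — for example on dimension and $\pi_0$ of the ambient group, with the trivial group as the vacuous base case — then promotes iterated non-equivariant vanishing to $W_G H$-equivariant vanishing of each $\Phi^H Z$. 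With this in hand every summand of the splitting vanishes, forcing $\pi^K_*(Z)=0$ for every $K$ and hence $Z\simeq 0$.
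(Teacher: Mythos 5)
Your proposal is correct and fills in precisely the isotropy-separation argument that the paper leaves unproved as ``well known,'' with the expected Noetherian induction on closed subgroups (equivalently on Weyl groups) promoting non-equivariant vanishing of geometric fixed points to equivariant vanishing. The one small imprecision is the phrase ``tom Dieck splitting'': for a general $G$-spectrum $Z$, $E\cF[K]_+\sm Z$ gives a filtration (by adding one conjugacy class of isotropy at a time) whose subquotients are the Borel constructions you list, not an honest direct-sum splitting; but since vanishing of every subquotient still forces vanishing of $\pi^K_*(E\cF[K]_+\sm Z)$, your ``layer-by-layer'' reading carries the argument through unchanged.
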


Next, we note that
$$\gI (G/K_+) =\Lcl(K).  $$
Any $G$-spectrum $X$ can be constructed from stable cells $G/K_+$ with
$K \in \Lcl \gI (X)$. One method is to construct a filtration
analogous to the (thickened) fixed point filtration of a space.
Simplifying this, if $\cF =\Lcl\gI (X)$ then $X\sm
\tilde{E}\cF$ has trivial   geometric fixed points (and is thus contractible by the Geometric Fixed Point
Whitehead Theorem). Now $X \sm E\cF_+$ may be constructed from cells
$G/K_+$ for $K \in \cF$. In effect we use the result for the special
case $E\cF_+$ together with the fact that $G/H_+ \sm G/K_+$ can be
constructed from cells $G/K'_+$ with $K'\subseteq K$.

We now see how we can to take advantage of the additional flexibility of working rationally.

\subsection{Basic cells}
\label{subsec:basic}
The classical cell $G/K_+$  is $S^0$ induced up from $K$, so if the
$K$-equivariant sphere decomposes, so does $G/K_+$. We know $[S^0, S^0]^K=A(K)$ is the
Burnside ring. Rationally this consists of continuous $\Q$-valued
functions on the space $\Phi K$ of conjugacy classes of subgroups of
finite index in its normalizer \cite{tD}. When $K$ has identity component a
torus this means  $A(K)\cong \prod_{(\overline{L})}\Q$, with the product
over conjugacy classes of subgroups $\overline{L}$ of
$\overline{K}=\pi_0(K)$. Accordingly we obtain one primitive
idempotent $e_{\overline{L}}$ for each conjugacy class. 
The building blocks are thus the {\em basic cells} 
$$\sigma_K:=\basic{K}.$$
We  develop cell structures  based on these. The point is that
the geometric isotropy of $\sigma_K$  is smaller than that of $G/K_+$.

\begin{lemma}
\label{lem:gisigmaK} 
The geometric isotropy of $\sigma_K$ consists of all cotoral subgroups
of $K$:
$$\gI (\sigma_K)=\Lqst (K). $$
\end{lemma}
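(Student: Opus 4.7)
The plan is to compute $\Phi^H\sigma_K$ for closed subgroups $H\subseteq G$ and characterize when it is non-equivariantly nontrivial. Since $G$ is abelian, the double-coset decomposition of geometric fixed points of an induced spectrum collapses (all $G$-conjugates of $H$ coincide with $H$), giving: for any $K$-spectrum $Y$, $H\in\gI(G_+\wedge_K Y)$ iff $H\subseteq K$ and $\Phi^H(Y)\not\simeq_1 0$. Applied to $Y=e_K S^0$, this reduces to: for $H\subseteq K$, when is $\Phi^H(e_K S^0)$ non-equivariantly nontrivial?

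Since $\Phi^H(e_K S^0)$ is an idempotent summand of $\Phi^H(S^0)\simeq S^0$ (as a $W_K H$-spectrum), its underlying non-equivariant spectrum is $c\cdot S^0$, where $c=\chi(e_K^H)\in\Q$ is the \emph{mark} of $e_K$ at $H$: the value at $e_K$ of the composite $A(K)_\Q\to A(W_K H)_\Q\to\Q$ (apply $\Phi^H$, then augment). So $H\in\gI(\sigma_K)$ iff $H\subseteq K$ and $\chi(e_K^H)\neq 0$.

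To compute the mark, use the identification $A(K)_\Q\cong\prod_{\bar L\subseteq\bar K}\Q$ from the paper: the idempotent $e_K=e_{\bar K}$ corresponds to the delta function at $K$, so $\chi(e_K^{H'})=\delta_{H'=K}$ for every $H'\supseteq K_0$ (i.e.\ every $H'\in\Phi K$). For general $H\subseteq K$, observe that on each generator $[K/K']$ with $K'\supseteq K_0$, $\chi([K/K']^H)$ equals $|K/K'|$ if $H\subseteq K'$ and $0$ otherwise --- a value depending only on $HK_0$, since $K'\supseteq K_0$ forces $H\subseteq K'\Leftrightarrow HK_0\subseteq K'$. By $\Q$-linearity, $\chi(e_K^H)=\chi(e_K^{HK_0})=\delta_{HK_0=K}$.

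Finally, $HK_0=K$ is exactly the cotoral condition on $H$ in $K$: since $K$ is abelian (being a subgroup of the torus $G$), $H\triangleleft K$ is automatic, and $K/H$ is a compact abelian Lie group --- a torus iff connected, iff $\pi_0(K/H)=K/HK_0$ is trivial. Hence $\gI(\sigma_K)=\{H\subseteq K:K/H\text{ is a torus}\}=\Lqst(K)$, as claimed. The main obstacle is the third paragraph's mark computation; its crux is the \emph{toral stability} that enlarging $H$ to $HK_0$ leaves the mark of $e_K$ unchanged, which encodes the reason the rational theory is sensitive only to cotoral inclusions.
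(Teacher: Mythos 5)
Your proof is correct, but it follows a genuinely different route from the paper's. The paper splits the two inclusions asymmetrically: for $\Lqst(K)\subseteq\gI(\sigma_K)$ it first shows $K\in\gI(\sigma_K)$ by a cell-structure argument (the cofibre of $e_KS^0\to S^0$ is built from non-$K$-fixed $K$-cells, hence $\Phi^Ki$ is a non-equivariant equivalence) and then invokes Proposition~\ref{prop:gIctclosed} (cotoral closure of $\gI$, i.e.\ the Localization Theorem) to get the whole cotoral ideal; for the reverse inclusion it notes that for $L\subset K$ not cotoral, an orthogonal idempotent absorbs $\Phi^L S^0$. Your argument instead computes the full answer in one pass by evaluating the mark $\chi(e_K^H)\in\{0,1\}$ for every closed $H\subseteq K$, reducing via the double-coset collapse for abelian $G$ to a Burnside-ring character calculation, and isolating the ``toral stability'' $\chi(e_K^H)=\chi(e_K^{HK_0})$ (visible already on the basis elements $[K/K']$ with $K'\supseteq K_0$) as the mechanism underlying cotoral invariance. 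This bypasses the Localization Theorem entirely for this lemma, which buys self-containedness and makes the arithmetic source of cotorality explicit; the paper's route buys consistency with its global narrative, in which the Localization Theorem is the organizing engine and cell structures do the bookkeeping. Both routes ultimately rest on the same Burnside ring facts, but package them at opposite ends of the structural/computational spectrum.
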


\begin{proof}
The $K$-spectra  $e_LS^0 $ for $L$ a proper subgroup of $K$ can be
constructed with non-fixed $K$-cells, so this is true of the cofibre
of the inclusion $i:e_KS^0\lra S^0$. Inducing up to $G$, the 
cofibre of $i:\sigma_K=\basic{K}\lra G_+\sm_KS^0\simeq G/K_+$ is
constructed from cells $G/L_+$ with $L$ a proper subgroup of $K$. Thus
$\Phi^Ki$ is a non-equivariant equivalence. Since
$\Phi^KG/K_+=G/K_+\not \simeq 0$, $K\in \gI (\sigma_K)$ and hence
by Proposition \ref{prop:gIctclosed}
$\Lqst (K)\subseteq \gI (\sigma_K)$.

Conversely, since $\sigma_K$ is a retract of $G/K_+$, $\gI (\sigma_K)$
consists of subgroups of $K$, and if $L\subset K$ is not cotoral in
$K$ then there is an idempotent $e_L$ orthogonal to $e_K$ with 
$\Phi^L(e_LS^0)=\Phi^L(S^0)$ and hence $\Phi^L(\sigma_K)=
\Phi^L(G_+\sm_Ke_Le_KS^0)\simeq 0$.
\end{proof}

\subsection{Basic detection}
Basic cells play a comparable role to classical cells in that they
generate the category and detect equivalences. The smaller isotropy
means that we can make slightly stronger statements. 

\begin{lemma}
\label{lem:basicbuildsclass}
The category of rational $G$-spectra is generated by the basic cells.
\end{lemma}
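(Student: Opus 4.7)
The plan is to reduce to the classical fact that the orbits $G/K_+$, for $K$ a closed subgroup of $G$, generate the $G$-equivariant stable homotopy category, and then to show that rationally each $G/K_+$ is built from basic cells.

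Since $K_0$ is a torus, $\overline{K}=K/K_0$ is finite and the rational Burnside ring splits: $A(K)=[S^0,S^0]^K\cong\prod_{(L)}\Q$, indexed by $K$-conjugacy classes of closed subgroups $L$ with $K_0\subseteq L\subseteq K$. The primitive idempotents $e_L\in A(K)$ give a finite wedge splitting $S^0\simeq\bigvee_{(L)}e_L S^0$ of $K$-spectra, and hence
$$G/K_+\;\simeq\;\bigvee_{(L)} G_+\sm_K e_L S^0,$$
with the $L=K$ summand equal to $\sigma_K$. To handle the other summands I would exhibit each $G_+\sm_K e_L S^0$ as a retract of $\sigma_L$. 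Since $\Phi^H(e_L)$ is $1$ when $H$ is $K$-conjugate to $L$ and $0$ otherwise, and since the only $K$-conjugate of $L$ contained in $L$ is $L$ itself, $\res^K_L(e_L)=e_L\in A(L)$. Transitivity of induction together with the identification $K_+\sm_L\res^K_L Z\simeq K/L_+\sm Z$ then gives
$$\sigma_L\;\simeq\; G_+\sm_K(K/L_+\sm e_L S^0).$$
Because $[K:L]$ is finite and invertible rationally, the Wirthm\"uller transfer $S^0\to K/L_+$ is (after scaling) a $K$-equivariant section of the collapse $K/L_+\to S^0$, so $e_L S^0$ is a $K$-equivariant retract of $K/L_+\sm e_L S^0$, and hence $G_+\sm_K e_L S^0$ is a $G$-equivariant retract of $\sigma_L$. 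The geometric isotropies of both sides agree and equal $\Lqst(L)$ by Lemma~\ref{lem:gisigmaK}, a useful consistency check.

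The main technical point is this last retraction step, which relies on rationality of $[K:L]$ via the Wirthm\"uller transfer; the remainder is standard Burnside ring combinatorics. If the direct approach becomes awkward, a fallback is detection by the Geometric Fixed Point Whitehead Theorem: orthogonality $[\sigma_K,X]^G_*=0$ for all $K$ forces $e_K\pi^K_*(X)=0$ for every $K$, and the rational tom Dieck splitting then forces $\pi_*(\Phi^K X)=0$ for all $K$, so that $X\simeq 0$.
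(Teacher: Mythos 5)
Your approach differs from the paper's, and in the end gives a sharper conclusion. The paper shows $G/K_+$ is built from basic cells by induction on $K$ in the poset of closed subgroups: using the observation in the proof of Lemma~\ref{lem:gisigmaK} that each $e_LS^0$ (for $L\subsetneq K$) has a $K$-CW structure with non-fixed cells, every summand $G_+\sm_K e_LS^0$ other than $\sigma_K$ is built from $G/M_+$ with $M\subsetneq K$, and the inductive hypothesis finishes the step. You instead exhibit each such summand directly as a retract of the single basic cell $\sigma_L$, so that $G/K_+$ becomes a retract of a \emph{finite wedge} $\bigvee_{(L)}\sigma_L$ over $K_0\subseteq L\subseteq K$, with no induction at all. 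The reduction $\sigma_L \simeq G_+\sm_K\bigl(K/L_+\sm e_LS^0\bigr)$ via Frobenius reciprocity, and the identity $\res^K_L e_L^K=e_L^L$ (which you justify correctly by the index argument that no proper $K$-conjugate of $L$ can lie inside $L$), are both fine.

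The one step that does not go through as stated is the retraction via the transfer. The composite $S^0\xrightarrow{\tau}K/L_+\xrightarrow{c}S^0$ of pretransfer and collapse is the element $[K/L]\in A(K)$, and this is \emph{not} a scalar multiple of the identity: its value at a subgroup $M$ with $K_0\subseteq M\subseteq K$ is $|(K/L)^M|$, and in particular $\Phi^K([K/L])=|(K/L)^K|=0$ whenever $L\subsetneq K$, so $[K/L]$ is a zero-divisor in $A(K)\tensor\Q$ and $\tau$ cannot be a section of $c$ after any rescaling. What makes your conclusion true nonetheless is that you only need the statement after smashing with $e_L^KS^0$: there one has $e_L^K\cdot[K/L]=|N_K(L)/L|\cdot e_L^K$ (since $(K/L)^L=N_K(L)/L$), and $|N_K(L)/L|\geq 1$ is invertible in $\Q$. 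So $e_L^KS^0$ is indeed a rational $K$-equivariant retract of $K/L_+\sm e_L^KS^0$, but the scaling factor is $|W_K(L)|=|N_K(L)/L|$ rather than $[K:L]$, and one must pass to the $e_L$-summand first --- the unlocalized transfer is not a section. With that replacement your argument is correct. (Your fallback sketch needs more care: ``$e_K\pi^K_*(X)=0$ for all $K$ implies $\Phi^KX\simeq 0$ for all $K$'' is not an application of the tom~Dieck splitting for a general $G$-spectrum $X$; the cleaner deduction is that $\pi^K_*(X)=\bigoplus_{(L)}e_L\pi^K_*(X)$ and $e_L\pi^K_*(X)$ is, by the retraction above, a summand of $e_L^L\pi^L_*(X)=0$, giving $\pi^K_*(X)=0$ directly.)
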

\begin{proof}
It suffices to show that the classical cells $G/K_+$ are built from
the basic cells. The proof is by induction on $K$ (i.e., we work with the
poset of all subgroups ordered by inclusion, and note that there are no
infinite decreasing chains). 

Since $G/1_+=\sigma_1$, the induction begins. Now suppose $K$ is
non-trivial  and that 
$G/L_+$ is built from basic cells for proper subgroups $L$ of $K$. Now
$G/K_+$ is a sum of $\sigma_K$ and the spectra $G_+\sm_K e_LS^0$ for
$L\subset K$. The spectrum  $G_+\sm_K e_LS^0$ is built from cells
$G/M_+$ for $M\subseteq K$ as in Lemma \ref{lem:gisigmaK} and hence
from basic cells by induction.
\end{proof}

\begin{remark}
The proof shows that $G/K_+$ is built from basic cells $\sigma_L$ with
$L\subseteq K$.
\end{remark}

There is a useful criterion for vanishing of homotopy in terms of
geometric isotropy. 

\begin{lemma}
\label{lem:disjtsupp}
(i) If $\Lcl (K) \cap \gI (X) =\emptyset$ then $[G/K_+,X]^G_*=0$. 

(ii) If $\Lqst (K)\cap \gI (X)=\emptyset$ then $[\sigma_K,X]^G_*=0$
\end{lemma}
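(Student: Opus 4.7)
The plan is to deduce both statements from the Geometric Fixed Point Whitehead Theorem by moving the source of the mapping space into a smash product via a suitable adjunction, and then verifying that geometric isotropy of that smash product is empty.

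For part (i), I would use the Wirthmüller/Frobenius identification
$$[G/K_+, X]^G_* \cong [S^0, X]^K_* = \pi^K_*(X)$$
and reduce to showing that $X$, regarded as a $K$-spectrum, is contractible. For any closed $L \subseteq K$, the underlying non-equivariant spectrum $\Phi^L X$ is the same whether $X$ is viewed as a $G$-spectrum or as a $K$-spectrum, so the $K$-equivariant geometric isotropy of $X|_K$ equals $\Lcl(K)\cap \gI(X)$. By hypothesis this set is empty, and the Geometric Fixed Point Whitehead Theorem applied over $K$ then forces $X|_K \simeq 0$, so $\pi^K_*(X)=0$.

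For part (ii), I plan to exploit dualizability of $\sigma_K$. Equivariant Atiyah duality gives that $G/K_+$ is dualizable (with dual a suitable representation-desuspension of itself), and since $\sigma_K$ is a retract of $G/K_+$ it is also dualizable. Because $\Phi^L$ is symmetric monoidal on finite $G$-spectra, it commutes with both smash products and dualization, so
$$\gI(D\sigma_K) = \gI(\sigma_K) = \Lqst(K)$$
by Lemma \ref{lem:gisigmaK}, and
$$\gI(D\sigma_K \wedge X) = \gI(D\sigma_K)\cap \gI(X) = \Lqst(K)\cap \gI(X) = \emptyset$$
by hypothesis. Applying the Geometric Fixed Point Whitehead Theorem gives $D\sigma_K \wedge X \simeq 0$, so
$$[\sigma_K, X]^G_* \cong [S^0, D\sigma_K \wedge X]^G_* = \pi^G_*(D\sigma_K \wedge X) = 0.$$

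The main obstacle, such as it is, lies not in any one calculation but in marshalling the standard facts: dualizability of the basic cell, the compatibility of geometric fixed points with both smash products and Spanier--Whitehead duals, and the Geometric Fixed Point Whitehead Theorem. Once these are in place the argument is essentially formal, and in fact (i) is really just the special case in which one works over the ambient group $K$ rather than $G$, while (ii) extracts the sharper statement by replacing the classical cell with its $e_K$-summand so that the relevant support shrinks from $\Lcl(K)$ to $\Lqst(K)$.
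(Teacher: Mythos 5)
Your proposal is correct. For part (i) you and the paper are doing the same thing: restrict to $K$, observe that the $K$-equivariant geometric isotropy is $\Lcl(K)\cap\gI(X)=\emptyset$, and invoke the Geometric Fixed Point Whitehead Theorem. For part (ii), however, your route is genuinely different from the paper's. You stay in $G$-spectra, observe that $\sigma_K$ is dualizable (retract of $G/K_+$), pass to $[\sigma_K,X]^G_*\cong\pi^G_*(D\sigma_K\sm X)$, and kill $D\sigma_K\sm X$ via the Whitehead theorem after noting $\gI(D\sigma_K\sm X)\subseteq\gI(\sigma_K)\cap\gI(X)=\emptyset$. The paper instead restricts along the change-of-groups adjunction, writing $[\sigma_K,X]^G_*=[e_KS^0,X]^K_*=[e_KS^0,e_KX]^K_*$, observes $\gI(e_KX)=\Lqst(K)\cap\gI(\res^G_KX)=\emptyset$, and applies the Whitehead theorem over $K$. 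Both are short and formal; your version imports the monoidal compatibility of $\Phi^L$ with duals and makes the reduction entirely internal to $G$-spectra, while the paper's version stays closer to the definition of $\sigma_K$ as an induced idempotent summand and avoids any appeal to duality. One small overstatement in your write-up: you assert $\gI(D\sigma_K\sm X)=\gI(D\sigma_K)\cap\gI(X)$ as an equality; the containment $\subseteq$ is what always holds (and is all you use), whereas equality needs the rational hypothesis to guarantee that a smash of two nonzero non-equivariant spectra is nonzero.
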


\begin{proof}
The first statement is immediate from the Geometric Fixed Point
Whitehead Theorem, since $\gI (\res^G_KX)=\emptyset$.

For the second, we note 
$$[\sigma_K ,X]^G_*=[e_KS^0, X]^K_*=[e_KS^0, e_KX]^K_*. $$
By hypothesis $\gI (e_KX)=\Lqst (K)\cap \gI (\res^G_K X)=\emptyset$,
so that $e_KX\simeq_K 0$ by the Geometric Fixed Point Whitehead Theorem. 
\end{proof}

A slightly refined version  of  the Whitehead Theorem holds in the
rational context. 

\begin{prop}
\label{prop:basicWHT}
Suppose that $\gI (X), \gI (Y)\subseteq \cK$ and 
$f:X\lra Y$ induces an isomorphism of $[\sigma_K, \cdot ]^G_*$ for all
$K\in \cK$. Then $f$ is an equivalence.  
\end{prop}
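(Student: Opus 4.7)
The plan is to form the cofibre $C$ of $f$ and argue $C\simeq 0$ via the Geometric Fixed Point Whitehead Theorem. Two observations are immediate. Applying $[\sigma_K,-]^G_*$ to the cofibre triangle shows $[\sigma_K,C]^G_*=0$ for every $K\in\cK$. And since $\Phi^K$ is exact, $\gI(C)\subseteq \gI(X)\cup\gI(Y)\subseteq \cK$, which by Proposition~\ref{prop:gIctclosed} is cotoral-closed. Thus it suffices to show $\gI(C)=\emptyset$.

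I would argue by contradiction. Suppose $\gI(C)\neq\emptyset$ and pick $L\in\gI(C)$ of minimal dimension. Any proper cotoral subgroup $M<L$ has $\dim M<\dim L$ (since $L/M$ is a non-trivial torus), so minimality together with the cotoral closure of $\gI(C)$ forces
$$\Lqst(L)\cap \gI(C)=\{L\}.$$

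The heart of the argument is then a local analysis at $L$. Consider the $L$-spectrum $e_L C$. Because the rational idempotent $e_L$ satisfies $\Phi^M(e_L)=1$ for $M\leq L$ cotoral and $\Phi^M(e_L)=0$ for $M\subseteq L$ not cotoral, the $L$-equivariant geometric isotropy is $\gI_L(e_L C)=\Lqst(L)\cap \gI_G(C)=\{L\}$. The Geometric Fixed Point Whitehead Theorem then gives an $L$-equivalence $e_L C\simeq e_L C\wedge \widetilde{E}\cP$, where $\cP$ is the family of proper subgroups of $L$, and by construction of geometric fixed points
$$\pi^L_*(e_L C)\;=\;\pi^L_*(e_L C\wedge \widetilde{E}\cP)\;=\;\pi_*(\Phi^L(e_L C))\;=\;\pi_*(\Phi^L C).$$
On the other hand, by the Wirthm\"uller isomorphism and the definition of $\sigma_L$,
$$[\sigma_L,C]^G_* \;=\;[e_L S^0,\res^G_L C]^L_*\;=\;\pi^L_*(e_L C),$$
which vanishes by hypothesis because $L\in\cK$. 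Therefore $\Phi^L C\simeq_1 0$, contradicting $L\in\gI(C)$, so $\gI(C)=\emptyset$ and $C\simeq 0$.

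The step I expect to require most care is the identification $\pi^L_*(e_L C)=\pi_*(\Phi^L C)$. This is where the rational idempotent $e_L$ is indispensable: without it, $\res^G_L C$ could have nontrivial geometric isotropy at non-cotoral subgroups of $L$, and the $L$-equivariant categorical and geometric homotopies would not agree. Concentrating the isotropy onto the single subgroup $L$ (via the minimality of $L$ and the kill-off properties of $e_L$) is precisely what makes the calculation collapse.
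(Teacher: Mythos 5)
Your proof is correct, and it hinges on the same local computation as the paper's proof---the identification $[\sigma_L,C]^G_* = e_L\pi^L_*(C) = \pi_*(\Phi^L C)$ for a \emph{finite} subgroup $L$, made possible by the rational idempotent $e_L$ and the fact that $\Lqst(L)=\{L\}$ when $L$ is finite. Where you differ is the route into and out of that computation. The paper's proof does not argue on $\gI(C)$ directly: it appeals to Lemma~\ref{lem:basicbuildsclass} (basic cells generate the whole category) so as to reduce to proving $[\sigma_K, C]^G_*=0$ for \emph{every} closed subgroup $K$, and then runs a three-way case analysis on the relation of $K$ to $\gI(C)$; the finite $L$ appears in the third case (an arbitrary finite $L\in\Lqst(K)\cap\gI(C)$), and the conclusion $\Phi^K C\simeq 0$ then feeds back into the earlier cases. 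You instead aim directly at $\gI(C)=\emptyset$ via the Geometric Fixed Point Whitehead Theorem on the cofibre, and the contradiction is surfaced by a minimal-dimension choice: if $L\in\gI(C)$ has minimal dimension, cotoral closure (Proposition~\ref{prop:gIctclosed}) forbids any proper cotoral subgroup of $L$ from lying in $\gI(C)$, so $L$ must be finite--- and then $\Lqst(L)\cap\gI(C)=\{L\}$ automatically, so $e_LC$ has isotropy concentrated at $L$ and the computation is immediate. This is a tidier packaging: it replaces the exhaustive case split and the detour through the basic-cell generation lemma with a single well-chosen element, but it buys nothing essentially new, since step~4 (the idempotent/fixed-point identification) is identical in both arguments. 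One presentational quibble: the map $[\sigma_L,C]^G_*\cong[e_LS^0,\res^G_L C]^L_*$ is the induction--restriction adjunction rather than the Wirthm\"uller isomorphism. Also be aware that your invocation of cotoral closure of $\gI(C)$ relies on Proposition~\ref{prop:gIctclosed}, which is stated for finite spectra, while neither $X$, $Y$ nor $C$ is assumed finite here---but the paper's own proof makes exactly the same appeal at the parallel point (``in view of Proposition~\ref{prop:gIctclosed} we may take $L$ to be finite''), so this is inherited rather than introduced.
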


\begin{proof}
Taking mapping cones, it suffices to show that if $\gI (Z)\subseteq
\cK$ and  $[\sigma_K ,
Z]^G_*=0$ for all $K\in \cK $ then $Z\simeq 0$.

By Lemma \ref{lem:basicbuildsclass}, it suffices to show $[\sigma_K,
Z]^G_*=0$ for all $K$. There are three cases. If $K\in \gI (Z)$ then
the vanishing follows from the hypothesis on $f$. If $\Lqst (K)\cap \gI
(Z)=\emptyset$, it  follows from Lemma
\ref{lem:disjtsupp}. 

Finally if $K\not \in \gI Z)$ but there is a
subgroup $L \in \Lqst (K) \cap \gI (Z)$, then in view of Proposition  
\ref{prop:gIctclosed} we  may take $L$ to be  finite.  Now, from the hypothesis we have
$$0= [\sigma_L, Z]^G_*=e_L[S^0, Z]^L_*=[S^0, \Phi^LZ]_*.$$
By the Serre spectral sequence we conclude $H^*_{H/L}(\Phi^LZ)=0$ for
every $H$ with $H/L$ a torus. It follows that $\Lqst
(K/L) \cap \gI (\Phi^LZ) =\emptyset$ and $e_{K/L}\Phi^LZ\simeq 0$. Hence
$\Phi^KZ=\Phi^{K/L}\Phi^LZ\simeq 0$. 

Combining the three cases,  $\Lqst K \cap \gI Z=\emptyset$, and therefore
 by Lemma \ref{lem:disjtsupp} $[\sigma_K, Z]^G_*=0$ as required. 
\end{proof}

\subsection{Basic structures}
When we work rationally, classical containment of subgroups is
replaced by cotoral inclusion. Cells $G/K_+$ are no longer
indecomposable, and we have basic cells $\sigma_K$. If $K$ is a torus
then $\sigma_K=G/K_+$, but if $K$ is not a torus then we have a proper inclusion
$$\gI (\sigma_K)=\Lqst (K)\subset \Lcl(K)=\gI (G/K_+). $$

\begin{lemma}
\label{lem:constructfrombasicingi}
Any $G$-spectrum $X$ can be constructed from basic cells $\sigma_K$
with $K$ in $\Lqst (\gI (X))$. 
\end{lemma}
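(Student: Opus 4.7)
The plan is to apply the Basic Whitehead Theorem (Proposition~\ref{prop:basicWHT}). Set $\cK = \Lqst(\gI(X))$; I will construct a $G$-spectrum $X'$ built from basic cells $\sigma_K$ with $K \in \cK$, together with a map $f: X' \to X$, so that the hypotheses of that theorem are met and $f$ is forced to be an equivalence. Since $\cK$ is by construction closed under cotoral passage, exhibiting $X \simeq X'$ finishes the proof.

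The construction of $f: X' \to X$ is the standard transfinite cell-attachment argument, restricted to basic cells indexed by $\cK$. Start with a wedge $X'_0 = \bigvee_\alpha \sigma_{K_\alpha}[n_\alpha]$ indexed by generators of the groups $[\sigma_K, X]^G_*$, as $K$ ranges over $\cK$ and the degree over $\Z$, giving a map $X'_0 \to X$ that is surjective on $[\sigma_K, \cdot]^G_*$ for every $K \in \cK$. Iteratively cone off kernel classes (each represented by a map out of some $\sigma_K[n]$ with $K \in \cK$) and wedge on representatives of any newly missed homotopy classes, taking sequential homotopy colimits at limit ordinals. The resulting $X'$ is assembled entirely from basic cells $\sigma_K$ with $K \in \cK$, and by construction $[\sigma_K, f]^G_*$ is an isomorphism for every $K \in \cK$ and every degree.

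It remains to verify the geometric isotropy hypotheses of Proposition~\ref{prop:basicWHT}. For the target, $\gI(X) \subseteq \Lqst(\gI(X)) = \cK$ is immediate. For the source, Lemma~\ref{lem:gisigmaK} gives $\gI(\sigma_K) = \Lqst(K) \subseteq \cK$ for every building cell (using the cotoral closure of $\cK$), and geometric fixed points commute with wedges, cofibre sequences and sequential homotopy colimits, so $\gI(X') \subseteq \cK$ as well. Proposition~\ref{prop:basicWHT} then yields that $f$ is an equivalence, so $X \simeq X'$ is built from $\sigma_K$ with $K \in \cK$, as required. The main point requiring care is setting up the cell-attachment construction so that the transfinite colimits behave correctly with respect to the geometric fixed point functors used in Proposition~\ref{prop:basicWHT}; this is a routine small-object argument given a suitable model structure on rational $G$-spectra, but it is where the technical work lives.
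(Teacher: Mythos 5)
Your proof is correct and proceeds from the same basic toolkit (smallness of the basic cells and Proposition~\ref{prop:basicWHT}), but it runs the construction in the opposite direction from the paper. You build a basic-cell approximation $X' \to X$ from below, by the standard small-object/CW-approximation argument: wedge on cells to achieve surjectivity on $[\sigma_K,\cdot]^G_*$, cone off kernel classes, iterate, and conclude with Proposition~\ref{prop:basicWHT} applied to $f\colon X'\to X$. The paper instead builds a \emph{killing tower} on $X$ itself: it takes wedges $P_s$ of basic cells mapping onto all of $[\sigma_K,X_s]^G_*$, lets $X_{s+1}$ be the cofibre, observes by smallness that $[\sigma_K,X_\infty]^G_*=0$ and $\gI(X_\infty)\subseteq\cK$, hence $X_\infty\simeq 0$ by Proposition~\ref{prop:basicWHT}; the required cell structure on $X$ is then read off from the dual (fibre) tower $X^s=\fibre(X\to X_s)$, whose successive cofibres are the $\Sigma^{-1}P_s$. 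The two arguments carry the same mathematical content, but the paper's version is slightly leaner: because the approximation $X^s$ is manufactured as the fibre of an already-understood tower, there is no need to interleave ``wedge on to hit new classes'' with ``cone off kernel classes'' and check that surjectivity survives the coning --- convergence is handled in one stroke by showing $X_\infty\simeq 0$. Also, as you note yourself, your appeal to transfinite induction is unnecessary: since each $\sigma_K$ is small, a countable iteration suffices, which is what the paper uses. The main check in both approaches --- that the hypotheses $\gI(X),\gI(X')\subseteq\cK$ (resp.\ $\gI(X_\infty)\subseteq\cK$) of Proposition~\ref{prop:basicWHT} are met --- is the same, and you carry it out correctly by citing Lemma~\ref{lem:gisigmaK} and the compatibility of geometric fixed points with the relevant (co)limits.
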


\begin{proof}
Take $\cK =\Lqst \gI (X)$.  We may construct a map $p:P\lra X$ so that $P$ is a wedge of suspensions
of basic cells $\sigma_K$ for $K\in \cK$, and so that $p_*$ is surjective on $[\sigma_K, \cdot
]^G_*$ for all $K \in \cK$. Iterating this, we form a diagram 
$$\diagram
X \ar@{=}[r]&X_0\rto &X_1\rto &X_2\rto &\cdots \\
&P_0\uto &P_1\uto &P_2\uto &
\enddiagram$$

We take $X_{\infty}=\hocolim_s X_s$, and note that 
since $\sigma_K$ is small for each $K$, it follows
 $[\sigma_K, X_{\infty}]^G_*=0$ for $K\in \cK$.
Since  $\gI (X_{\infty})\subseteq \Lqst \gI (X)=\cK$, it follows from
Proposition \ref{prop:basicWHT} that $X_{\infty}\simeq 0$. Arguing with the
 dual tower, we see $X$ can be constructed from cells $\sigma_K$:
 indeed, we define $X^s$ by the cofibre sequence
$$X^s\lra X\lra X_s. $$
By definition  $X^0\simeq *$, and we have
$$\Sigma^{-1}P_s \lra X^s\lra X^{s+1}. $$
Again $X^{\infty}=\colim_s X^s$, and since $X_{\infty}\simeq 0$, we see
$X^{\infty}\simeq X$. 
\end{proof}

\begin{remark}
One can imagine other proofs. One is to construct an analogue of the
map  $E\cF_+ \lra S^0$ for a family $\cF$. This is a spectrum 
$\elr{\cK}$ with geometric isotropy $\cK$ and a map $\elr{\cK}\lra S^0$ which is an equivalence in geometric $K$-fixed points for all
$K\in \cK$ (one construction follows from the results below).  One then mimics the rest of the proof in the classical
case. 

It then follows that  $X\simeq X \sm \elr{\cK}$. 
Now we construct $\elr{\cK}$ out of basic cells $\sigma_K$ with $K
\in \cK$, and claim that $G/H_+\sm \sigma_K$ can be constructed from 
basic cells $\sigma_{K'}$ for $K'$ cotoral in $K$.

It may also be useful to formulate a statement which replaces a
classical cell structure by a basic cell structure with basic cells
$\sigma_K$ for $K$ lying in a cotorally closed collection. 
\end{remark}

The two natural notions of finiteness for rational spectra coincide.

\begin{lemma}
 A rational $G$-spectrum is constructed from finitely many 
basic cells $\sigma_K$ if and only if it is constructed from finitely
many classical cells $G/K_+$.
\end{lemma}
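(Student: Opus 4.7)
The plan is to handle the two implications separately.

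For the implication from basic cells to classical cells, I would use that the primitive idempotents of $A(K) \cong \prod_{(\overline{L})} \Q$ split $S^0$ as a $K$-spectrum into a finite wedge, and consequently split $G/K_+ = G_+ \wedge_K S^0$ as a $G$-spectrum into a finite wedge of summands, one of which is $\sigma_K$. (The product is finite because $K$ is a closed subgroup of the torus $G$, so $\pi_0(K)$ is finite.) Thus $\sigma_K$ is a retract of $G/K_+$, so $\thick(\sigma_K) \subseteq \thick(G/K_+)$, and anything finitely built from $\sigma_{K_1}, \dots, \sigma_{K_n}$ is finitely built from $G/{K_1}_+, \dots, G/{K_n}_+$.

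For the converse, it suffices to show that each classical cell $G/K_+$ is finitely built from basic cells. I would induct on $K$ in the inclusion order on closed subgroups of $G$; since every such $K$ has the form $T' \times F$ with $T'$ a subtorus and $F$ a finite abelian group, this order satisfies the descending chain condition (e.g.\ lexicographic on $\dim K$ and $|\pi_0(K)|$). The base case $K = 1$ is immediate from $G/1_+ = \sigma_1$. For the inductive step I would invoke the finite Burnside-ring splitting
$$G/K_+ \simeq \sigma_K \vee \bigvee_{\overline{L} \subsetneq \overline{K}} G_+ \wedge_K e_{\overline{L}} S^0,$$
and note, as in the proof of Lemma \ref{lem:gisigmaK}, that each non-$\sigma_K$ summand is built from classical cells $G/M_+$ with $M \subsetneq K$. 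By the induction hypothesis each such $G/M_+$ is finitely built from basic cells, so each summand is, and hence so is $G/K_+$.

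The main obstacle is guaranteeing finiteness in the inductive step: the argument for Lemma \ref{lem:gisigmaK} produces a cell structure on each summand $G_+ \wedge_K e_{\overline{L}} S^0$ (for $\overline{L} \subsetneq \overline{K}$) using classical cells $G/M_+$ with $M \subsetneq K$, but does not on its face use only finitely many. To secure this, I would exploit that the summand is compact (being a retract of the compact $G/K_+$) and invoke the standard principle that in a compactly generated triangulated category any compact object already lies in the thick subcategory generated by finitely many of a chosen set of compact generators; applied to the localising subcategory of $G$-spectra with geometric isotropy in $\{M : M \subsetneq K\}$, compactly generated by $\{G/M_+ : M \subsetneq K\}$, this yields the required finite building.
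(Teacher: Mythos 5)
Your proposal is correct and uses essentially the same ideas as the paper's proof: the retract argument ($\sigma_K$ is a retract of $G/K_+$) for the easy direction, and compactness to upgrade a possibly infinite building to a finite one for the other direction. The paper is slightly more economical: rather than re-running the induction inline and invoking compactness at each step, it simply cites Lemma \ref{lem:basicbuildsclass} (which does the induction without any finiteness claim) and then applies smallness of $G/K_+$ exactly once to extract a finite basic subcomplex. Your appeal to the Thomason--Neeman principle for compact objects in a compactly generated category is the same compactness argument, just packaged inside the induction rather than at the very end.
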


\begin{proof}
Since $\sigma_K$ is a retract of  $G/K_+$, a basic-finite complex is a
finite complex. The standard cell $G/K_+$ is a basic-finite complex
(it is built by basic cells using Lemma \ref{lem:basicbuildsclass},
and then $G/K_+$ is a retract of a finite basic complex using
smallness). Accordingly,  any
classical-finite complex is basic-finite.  
\end{proof}

\section{The abelian model of rational $G$-spectra}
\label{sec:AG}
The main theorem of \cite{tnqcore} states that there is a Quillen equivalence
$$\mbox{$G$-spectra/$\Q$}\simeq d\cA (G)$$
between rational $G$-spectra and differential objects of $\cA (G)$. 
So far we have worked directly in the category of rational $G$-spectra, but for 
the most delicate part of the proof it is convenient to use the algebraic model. 

Accordingly we summarize the definition and properties of the abelian category
$\cA (G)$ that we need from  \cite{tnq1} including an Adams spectral sequence based on it. 
 The present
account is very brief and readers may need to refer to \cite{tnq1} for
details. The structures from that analysis will be relevant to much of what
we do here.

\subsection{Definition of the category}
First we must construct the category $\cA (G)$, which is a  category of modules
over a diagram of rings.  For a category $\bfD$ and a diagram of $R
:\bfD \lra \rings$ of rings,   an $R$-module is given by a $\bfD$-diagram $M$ such that $M(x)$ is an
$R(x)$-module for each object $x$ in $\bfD$, 
and for every morphism $a: x\lra y$ in $\bfD$, the map $M(a): M(x) \lra M(y)$ 
is a module map over the ring map $R(a): R(x) \lra R(y)$.

The shape of the diagram for $\cA (G)$ is given by the partially ordered set $\sub_c(G)$ 
of connected subgroups of $G$.  To start with we consider the single
graded ring
$$\cOcF =\prod_{F\in \cF }H^*(BG/F), $$
where the product is over the family $\cF$ of finite subgroups of $G$. 
To specify the value of the ring at a connected subgroup $K$,  
we use Euler classes: indeed if $V$ is a representation of $G$
we may define $c(V) \in \cO_{\cF}$ by specifying its components. 
In the factor corresponding to the finite subgroup $F$ we take
$c(V)(F):=c_{|V^F|}(V^F) \in H^{|V^F|}(BG/F)$ where $c_{|V^F|}(V^F)$ is the  classical Euler class
of $V^H$ in ordinary rational cohomology.

\newcommand{\cOtcF}{\widetilde{\cO_{\cF}}}
The diagram of rings $\cOtcF$ is defined by the following functor on $\sub_c(G)$ 
$$\cOtcF (K)=\cEi_K \cOcF$$
where $\cE_K =\{ c(V) \st V^K=0\} \subseteq \cOcF$ is the multiplicative
set of Euler classes of $K$-essential representations. Each of the Euler
classes is a finite sum of mutually orthogonal homogeneous terms, and so this localization is again a graded ring. 
Next we consider the category of modules $M$ over the diagram $\cOtcF$. 
Thus the value $M(K)$ is a module over $\cEi_K\cOcF$, and if
$L\subseteq K$, the structure map 
$$M(L)\lra M(K)$$
is a map of modules over the map 
$$\cEi_L \cOcF \lra \cEi_K \cOcF$$
of rings.   Note this map of rings is a localization since
$V^L=0$ implies $V^K=0$ 
so that $\cE_{L}\subseteq \cE_{K}$. 
The category $\cA (G)$ is formed from a subcategory of the
category of $\cOtcF$-modules by adding structure. There are two requirements which
we briefly indicate here.  Firstly they must 
be {\em quasi-coherent}, in that they are determined by their 
value at the trivial subgroup $1$ by the formula 
$$M(K):=\cEi_K M(1). $$

The second condition involves the relation between $G$ and its quotients. 
Choosing a particular connected subgroup $K$, we consider the
relationship between the group
$G$ with the collection $\cF$ of its finite subgroups  
and the quotient group $G/K$ 
with the collection $\cF /K$ of its finite subgroups.  
For $G$ we have the ring $\cOcF$ and for $G/K$ we have 
the ring
$$\cOcFK =\prod_{\tK \in \cF /K}H^*(BG/\tK)$$
where we have identified finite subgroups of $G/K$ with 
their inverse images in $G$, i.e., with subgroups $\tK$ of $G$
having identity component $K$. Combining the inflation maps associated
to passing to quotients by $K$ for individual groups, there is an inflation map 
$$\cOcFK \lra \cOcF. $$
%that we consider in more detail below. %in Section \ref{sec:PKDEFpvsDEFKp} below.
The second condition is  that the object should be {\em extended}, in 
the sense that for each connected subgroup $K$ there is a specified isomorphism 
$$M(K) \iso \cEi_K \cOcF \otimes_{\cOcFK} \phi^K M$$
for some $\cOcFK$-module $\phi^KM$, which is  a given part of the structure. 
These identifications should be compatible  when we have inclusions of connected
subgroups.  If we choose a subgroup $L$ and then the modules $\phi^KM$ for
$K\supseteq L$ fit together to make an object of $\cA (G/L)$.

\subsection{Connection with topology}
The connection between $G$-spectra and $\cA (G)$
is given by  a homotopy functor 
$$\piA_*: \Gspectra \lra \cA (G)$$
with the exactness properties of a homology theory. 
It is rather easy to write down the value of the functor as
a diagram of abelian groups.
 
\newcommand{\efp}{E\cF_+}
\newcommand{\siftyV}[1]{S^{\infty W(#1)}}
\begin{defn}
\label{defn:piA}
For a $G$-spectrum $X$ we define $\piA_*(X)$ on $K$  by 
$$\piA_*(X)(K)=\pi^G_*(D \efp \sm \siftyV{K} \sm X).$$
Here $\efp$ is the universal space for the family $\cF$ of finite 
subgroups with a disjoint basepoint added and $\DH \efp =
F(\efp , S^0)$ is its functional dual
(the function $G$-spectrum of maps from $\efp$ to $S^0$).
The $G$-space $\siftyV{K}$ is defined by  
$$\siftyV{K} =\colim_{V^K=0} S^V, $$
when $K \subseteq H$, so there is a map 
$\siftyV{K} \lra \siftyV{H}$ inducing the map 
$\piA_*(X)(K)\lra \piA_*(X)(H)$.\qqed
\end{defn}

The  definition of $\piA_*(X)$ shows that quasi-coherence for 
$\piA_*(X)$ is just a matter of understanding Euler classes. 
The extendedness of  $\piA_*(X)$ is a little more subtle, and will 
play a significant role later.  We take 
$$\phi^K \piA_*(X)= \pi^{G/K}_*(D E\cF/K_+ \sm \Phi^K(X)),$$ 
where $\PK$ is the geometric fixed point functor, and the extendedness
 follows from properties of the geometric
fixed point functor.

To see that $\piA_*(X)$ is a module over $\cO$, the key is to understand $S^0$.
 
\begin{thm}
\cite[1.5]{tnq1}
The image of $S^0$ in $\cA (G)$ is the structure functor:
$$\widetilde{\cO}_{\cF} =\piA_*(S^0), $$
with the canonical structure as an extended module. 
\end{thm}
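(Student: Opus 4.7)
The plan is to unfold Definition \ref{defn:piA}, compute the underlying diagram directly, and then verify the extended structure. Thus I need to check two things for each connected subgroup $K$: first, that the abelian group $\pi^G_*(D\efp \sm \siftyV{K})$ is canonically $\cEi_K \cOcF$, with the expected localization as the structure map for $L\subseteq K$; second, that the extended structure furnished by the geometric fixed point functor reproduces this via the canonical isomorphism $\cEi_K\cOcF \iso \cEi_K\cOcF\tensor_{\cOcFK}\cOcFK$.

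The heart of the argument is the base computation
$$\pi^G_*(D\efp) \iso \cOcF. $$
Rationally, the primitive idempotents of the Burnside rings $A(F)$ for $F\in \cF$ split $\efp$ as a product indexed by conjugacy classes of finite subgroups, and the piece indexed by $F$ has $G$-equivariant functional dual whose $G$-fixed homotopy is $H^{-*}(BG/F)$. Assembling over $F\in \cF$ identifies the total with $\cOcF=\prod_{F\in \cF}H^*(BG/F)$, endowed with its ring structure coming from that of the unit $S^0$.

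Next, smashing with $\siftyV{K}=\colim_{V^K=0}S^V$ inverts the multiplicative set $\cE_K$: each sphere map $S^0\lra S^V$ with $V^K=0$ induces multiplication by the Euler class $c(V)\in \cOcF$ on $\pi^G_*(D\efp)$, by the standard Thom-class identification applied componentwise in the product $\prod_F H^*(BG/F)$. Passing to the colimit therefore realizes the ring localization, so
$$\piA_*(S^0)(K) = \pi^G_*(D\efp \sm \siftyV{K}) \iso \cEi_K \cOcF, $$
and for $L \subseteq K$ the inclusion $\siftyV{L}\lra \siftyV{K}$ induces precisely the localization map $\cEi_L\cOcF \lra \cEi_K\cOcF$, reproducing the diagram $\cOtcF$ as an $\cOcF$-algebra.

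For extendedness, since $\Phi^K S^0\simeq S^0$ as a $G/K$-spectrum, applying the same base computation to the quotient group gives
$$\phi^K \piA_*(S^0) = \pi^{G/K}_*(DE(\cF/K)_+) \iso \cOcFK, $$
and the required isomorphism $\cEi_K\cOcF \iso \cEi_K\cOcF \tensor_{\cOcFK}\cOcFK$ is tautological; compatibility across nested connected inclusions is immediate from the naturality of $\siftyV{-}$ and of the geometric fixed point functor. The main obstacle is really the base identification $\pi^G_*(D\efp)\iso \cOcF$: it rests on the rational splitting of $\efp$ by Burnside-ring idempotents together with a tom Dieck style identification of each factor in terms of Borel cohomology, and is the structural input that makes $\cA (G)$ the correct algebraic model. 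Once it is granted, the remainder of the proof is naturality and Euler-class bookkeeping.
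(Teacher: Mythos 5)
The paper does not actually prove this statement; it is quoted with a citation to \cite[1.5]{tnq1}, so there is no in-text argument to compare against. What you are doing is reconstructing the proof from \cite{tnq1}, and the overall skeleton you propose is the right one: compute $\pi^G_*(D\efp)$, note that smashing with $\siftyV{K}$ inverts $\cE_K$ via Euler classes on Thom spectra, and read off extendedness from $\Phi^K S^0\simeq S^0$ together with the definition of $\phi^K\piA_*$. The second and third steps are essentially bookkeeping, as you say.

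The one place your sketch is actually imprecise is exactly where you admit the weight lies: the base identification $\pi^G_*(D\efp)\iso\cOcF$. You frame the splitting of $D\efp$ as coming from ``the primitive idempotents of the Burnside rings $A(F)$ for $F\in\cF$,'' but for an individual finite subgroup $F$ the idempotents of $A(F)$ live in $[G/F_+,G/F_+]^G$, not in an endomorphism ring acting on $\efp$; and since $G$ is a torus, $A(G)\tensor\Q=\Q$ offers no idempotents at all to do the splitting at the top level. The rational product decomposition of $D\efp$ (this is the result \cite{EFQ} in the bibliography, ``A rational splitting theorem for the universal space for almost free actions'') is obtained from idempotents in $\pi^G_0(D\efp)$ itself, which exist only after passing to this dual/limit object, and the fact that the $F$-indexed factor has $G$-homotopy $H^{-*}(BG/F)$ is a genuine theorem, not an immediate consequence of tom Dieck's splitting. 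You should either quote \cite{EFQ} as a black box at that point or fill in why $\pi^G_0(D\efp)$ contains the required family of orthogonal idempotents; as written the phrase ``idempotents of $A(F)$'' papers over the gap. Aside from that, the localization and extendedness steps are fine: the map $\siftyV{L}\lra\siftyV{K}$ induces the ring localization $\cEi_L\cOcF\lra\cEi_K\cOcF$ because $\cE_L\subseteq\cE_K$, and the tautological isomorphism $\cEi_K\cOcF\iso\cEi_K\cOcF\tensor_{\cOcFK}\cOcFK$ supplies the extended structure, with compatibility being naturality.
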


Some additional work confirms that $\piA_*$ has the appropriate behaviour.  
\begin{cor}
\cite[1.6]{tnq1}
The functor $\piA_*$ takes values in the abelian category $\cA (G)$.
\end{cor}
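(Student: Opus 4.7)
The plan is to verify the three defining properties of an object of $\cA(G)$ for the diagram $\piA_*(X)$: that it is an $\cOtcF$-module, that it is quasi-coherent, and that it is extended. The module structure and its naturality come essentially for free from Theorem 1.5. The unit map $S^0 \sm X \lra X$ combined with the identification $\piA_*(S^0) = \cOtcF$ turns $\piA_*(X)(K)$ into a module over $\cEi_K \cOcF$, and naturality of the pairing in the subgroup variable ensures that for $L \subseteq K$ the structure map $\piA_*(X)(L) \lra \piA_*(X)(K)$ is a module map over $\cEi_L \cOcF \lra \cEi_K \cOcF$.

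Quasi-coherence is a direct calculation from the definition. Since $\siftyV{K} = \colim_{V^K = 0} S^V$ and smashing with $S^V$ corresponds on $\pi^G_*$ to multiplication by the Euler class $c(V) \in \cE_K$, passing to the colimit gives
$$\piA_*(X)(K) = \pi^G_*(D\efp \sm \siftyV{K} \sm X) \iso \cEi_K \pi^G_*(D\efp \sm X) = \cEi_K \piA_*(X)(1).$$

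The substantive point is extendedness. Here one uses that $G$ is abelian, so $K$ is normal, and that the standard identity $\pi^G_*(Y \sm \siftyV{K}) \iso \pi^{G/K}_*(\Phi^K Y)$ applied to $Y = D\efp \sm X$ yields
$$\piA_*(X)(K) \iso \pi^{G/K}_*(\Phi^K D\efp \sm \Phi^K X).$$
One then identifies $\Phi^K D\efp$ with (a suitable version of) $D E(\cF/K)_+$ as a $G/K$-spectrum; this comes from the fact that the geometric $K$-fixed points of $E\cF_+$ represent the universal family on $G/K$ whose members are the images of finite subgroups of $G$ with identity component $K$. Unwinding the ring action, the right-hand side is exactly $\cEi_K \cOcF \tensor_{\cOcFK} \phi^K \piA_*(X)$, which is the required extended structure. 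Compatibility of these isomorphisms under inclusions $L \subseteq K$ is automatic from naturality.

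The main obstacle is the extendedness: one has to be careful both with the identification of $\Phi^K D\efp$ (because the functional dual does not commute with arbitrary fixed-point functors, and the equivalence relies on the specific nature of $E\cF_+$ and on $K$ being connected so that $G/K$ is again a torus) and with tracking how the inflation map $\cOcFK \lra \cOcF$ encodes the change of groups. Once this identification is in hand, the three conditions assemble into the statement that $\piA_*(X) \in \cA(G)$.
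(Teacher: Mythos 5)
The paper does not actually prove this corollary: it is quoted from \cite[1.6]{tnq1}, and the present text merely sketches the ingredients (``quasi-coherence\ldots is just a matter of understanding Euler classes,'' and for extendedness it records the formula $\phi^K\piA_*(X)=\pi^{G/K}_*(DE\cF/K_+\sm\Phi^KX)$). Your overall decomposition into module structure, quasi-coherence, and extendedness matches that sketch, and your treatment of the first two is fine. The problem is in the extendedness step, which you yourself flag as the main obstacle.

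Your plan identifies $\Phi^K D\efp$ with (``a suitable version of'') $DE(\cF/K)_+$ as a $G/K$-spectrum and then says the extended form drops out by ``unwinding the ring action.'' That identification cannot hold. You correctly get $\piA_*(X)(K)\iso\pi^{G/K}_*(\Phi^K D\efp\sm\Phi^KX)$, and the paper's $\phi^K\piA_*(X)=\pi^{G/K}_*(DE\cF/K_+\sm\Phi^KX)$. If $\Phi^K D\efp$ were equivalent to $DE(\cF/K)_+$, these two would be equal, not related by $\cEi_K\cOcF\tensor_{\cOcFK}(-)$; and indeed taking $X=S^0$ shows $\pi^{G/K}_*(\Phi^KD\efp)\cong\cEi_K\cOcF$ whereas $\pi^{G/K}_*(DE(\cF/K)_+)\cong\cOcFK$, so the two $G/K$-spectra are not equivalent. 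The actual content of extendedness is that $\Phi^K D\efp$ is the \emph{extension} of $DE(\cF/K)_+$ along the inflation $\cOcFK\lra\cEi_K\cOcF$, a nontrivial structural fact that in \cite{tnq1} is derived from the rational splitting of $D\efp$ into $F$-isotypical pieces (cf.\ \cite{EFQ}) rather than from any commutation of $\Phi^K$ with the functional dual. As written, you have replaced the theorem with a false simplification, so the extendedness step is a genuine gap and needs that splitting input (or an equivalent argument) to close.
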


\subsection{Geometric fixed points, geometric isotropy and basic cells}

Since the model was built on the idea that geometric fixed points are fundamental it is easy 
to read off the algebraic counterparts. 

\begin{remark}
The counterpart of the geometric $K$-fixed points in $\cA (G)$ is obtained by restricting attention to 
subgroups over $K$. More precisely, if $H\supseteq K$ then 
$$\phi^{H/K}(\Phi^KX) =\phi^H X, $$
or equivalently
$$X(H)=\cEi_H\cOcF \tensor_{\cO_{\cF /K}} \cEi_{H/K}\cO_{\cF/K} \tensor_{\cO_{\cF/H}}\phi^HX=\cEi_H\cOcF \tensor_{\cO_{\cF /K}} (\Phi^KX)(H/K). $$
\end{remark}

\begin{remark}
The geometric fixed points of an object $X$ of $d\cA(G)$ are determined in terms of the algebraic model by 
$$\gI (X)=\{ K\st H_*X(K)\neq 0\}. $$
\end{remark}

\begin{remark}
If $\tilde{K}$ is a subgroup with identity component $K$ then 
the algebraic model of the basic cell $\sigma_{\tilde{K}}$ is $f_{K}(\Q_{\tilde{K}})$. In other words, it is determined by its value at the identity component of $K$, 
and at that point it is extended from the $\cO_{\cF/K}$-module $\Q$ pulled back from $H^*(BG/\tilde{K})$. In fact, the value is $\cEi_K\cOcF \tensor_{H^*(BG/\tilde{K})}\Q$. 

Its value at $L$ is zero unless $L\subseteq K$, where  the value is $\cEi_L\cOcF\tensor_{H^*(BG/\tilde{K})}\Q$. 
\end{remark}

\subsection{The Adams spectral sequence}
The homology theory $\piA_*$ may be used as the basis of an 
Adams spectral sequence for calculating maps between rational
$G$-spectra. The main theorem of \cite{tnq1} is as follows.

\begin{thm} (\cite[9.1]{tnq1})
For any rational $G$-spectra $X$ and $Y$ there is a natural
Adams spectral sequence
$$\Ext_{\cA (G)}^{*,*}(\piA_*(X) , \piA_*(Y))\Rightarrow [X,Y]^{G}_*.$$
It is a finite spectral sequence concentrated in rows $0$ to $r$ (the rank of $G$)
and strongly convergent for all $X$ and $Y$. \qqed
\end{thm}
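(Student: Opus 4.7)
The plan is to produce the spectral sequence as an Adams spectral sequence based on the homology theory $\piA_*$, by topologically realizing an injective resolution of $\piA_*(Y)$ in $\cA(G)$. Three ingredients are required: $\piA_*$ is a homology theory landing in $\cA(G)$; the abelian category $\cA(G)$ has enough injectives and global injective dimension at most $r$; and every injective $I$ of $\cA(G)$ is realized by a $G$-spectrum $E(I)$ satisfying the representability formula $[X, E(I)]^G_* \cong \Hom_{\cA(G)}(\piA_*(X), I)$.

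That $\piA_*$ is a homology theory is essentially built in: by its level-wise definition in terms of $\pi^G_*$ applied to a smash product with $D\efp \sm \siftyV{K}$, it converts cofibre sequences to long exact sequences, and both the quasi-coherent and extended structures are preserved by cofibre sequences. For the injective dimension bound I would argue by assembling local injective resolutions across the poset $\sub_c(G)$: at each connected subgroup $K$ one has modules over $\cEi_K\cOcF$, a product of localizations of polynomial rings whose injective dimension is controlled by the codimension of $K$ in $G$, and the extended structure allows level-wise resolutions to be glued into a global resolution of length at most $r$. For realization of injectives, one reduces via the extended structure to realizing an injective $J$ supported at a single connected subgroup $K$, and builds $E(J)$ as a product of shifted Eilenberg-MacLane-type spectra obtained by smashing with $D\efp$, inflating from $G/K$, and inverting Euler classes in $\cE_K$; the representability formula is then checked directly from the formula for $\piA_*$ and the adjunction between $\Phi^K$ and inflation.

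With these ingredients in place the Adams tower is built in the standard way: embed $\piA_*(Y)$ in an injective $I_0$, realize the embedding by a map $Y \to E(I_0)$, take its fibre $Y_1$, and iterate. The injective dimension bound forces the tower to terminate after at most $r+1$ stages, so the associated spectral sequence is finite, concentrated in rows $0$ through $r$, and automatically strongly convergent for every $X$ and $Y$. The identification of the $E_2$ page as $\Ext^{*,*}_{\cA(G)}(\piA_*(X), \piA_*(Y))$ is routine once realization and representability are in hand.

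The main obstacle is the realization of injectives together with the verification of the Hom-formula: the definitions of $\piA_*$ and of $\cA(G)$ are intricate, and producing a $G$-spectrum whose algebraic invariants realize a prescribed injective --- and then proving that maps into it are computed by $\Hom$ in $\cA(G)$ --- is where the bulk of the technical work sits, and requires the full structural analysis of $\cA(G)$ carried out in \cite{tnq1}. The injective dimension bound is cleaner but still requires a careful homological analysis over $\sub_c(G)$, keeping track of the interplay between the localizations $\cEi_K\cOcF$ at different $K$.
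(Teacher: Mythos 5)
The paper does not actually prove this theorem: it is stated as a citation to \cite[9.1]{tnq1}, and no argument is given here. That said, your sketch is a faithful reconstruction of the strategy actually used in \cite{tnq1}: one constructs the spectral sequence by realizing an injective resolution of $\piA_*(Y)$ topologically, using the facts that $\piA_*$ is a homology theory valued in $\cA(G)$, that $\cA(G)$ has injective dimension $r$, and that each injective $I$ is realized by a spectrum $E(I)$ with $[X,E(I)]^G_* \cong \Hom_{\cA(G)}(\piA_*(X),I)$; the tower then terminates after $r+1$ stages, giving finiteness and strong convergence. You have correctly located where the real work sits --- the realization of injectives and the Hom-formula, together with the injective dimension bound over the poset $\sub_c(G)$ --- and these are indeed the substantive technical results established in \cite{tnq1} that the present paper takes as given.
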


\section{Geometric isotropy of finite spectra}
\label{sec:giforfinite}
This section is the core of the entire paper. We show that thick
tensor ideals are completely determined by the basic cells they
contain. In one direction, we show that if $K$ is maximal in the
geometric isotropy of $X$, then $\sigma_K$ is in $\tthick (X)$, and in
the other that basic cells construct everything that their geometric
isotropy suggests. This comes down to a detailed calculational
understanding of maps out of basic cells.

The following finiteness theorem is fundamental. 

\begin{lemma}
\label{lem:finmax}
If $X$ is a finite spectrum then $\gI (X)$ has finitely many maximal
elements. 
\end{lemma}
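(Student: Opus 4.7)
The plan is to combine the basic-cell presentation of finite spectra with a finiteness argument at the level of identity components, passing to the algebraic model of Section 4 for the delicate part.

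By Lemma 3.5 together with the compatibility of geometric fixed points with cofibre sequences and wedges, a finite $X$ satisfies
$$\gI(X) \subseteq \bigcup_{i=1}^n \gI(\sigma_{K_i}) = \bigcup_{i=1}^n \Lqst(K_i)$$
for some basic cells $\sigma_{K_1},\ldots,\sigma_{K_n}$ presenting $X$. Since ascending cotoral chains strictly increase dimension, any chain in this union has length at most $r$, so maximal elements exist and every element lies beneath a maximal one; the question is their finiteness.

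For the finiteness of the maximal antichain I would pass to the algebraic model via the Quillen equivalence, where finite $X$ corresponds to a compact object of $D(\cA(G))$. At each connected subgroup $K \in \sub_c(G)$ the extended-module datum $\phi^K \piA_*(X)$ is finitely generated over the product ring $\cO_{\cF/K} = \prod_{\tilde K \in \cF/K} H^*(BG/\tilde K)$, where $\tilde K$ runs over subgroups of $G$ with identity component $K$; finite generation over a product forces the module to be supported on only finitely many factors $\tilde K$. Translating back, for each connected $K$ only finitely many subgroups $\tilde K$ with $\tilde K_0 = K$ can lie in $\gI(X)$.

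The main obstacle is controlling which connected subgroups $K \in \sub_c(G)$ occur as identity components of maximal elements. Compactness alone permits $\phi^K \piA_*(X)$ to be nonzero for infinitely many $K$ (for $X = S^0$, this happens at every $K$), so maximality is essential: if $L$ is maximal then no cotoral extension $L \cdot T \supsetneq L$ (for $T$ a positive-dimensional subtorus) lies in $\gI(X)$. I expect this condition to restrict the identity components of maximal elements to a finite set via a Borel-cohomology support argument: the finitely-generated $H^*(BG)$-module $H^*_G(X)$ has support with finitely many irreducible components in $\spec H^*(BG)$ by Corollary 2.2 and the Localization Theorem, which bounds the ``top'' subtori and hence the relevant identity components. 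Making this precise in the general possibly-disconnected case (where the identity component of a maximal $L$ need not itself lie in $\gI(X)$, as the example $X = \sigma_{C_p}$ already shows) is the bulk of the work.
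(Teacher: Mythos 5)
Your first paragraph already contains everything you need, and the rest of the proposal is an unnecessary (and ultimately vague) detour through the algebraic model. You correctly deduce, from Lemma \ref{lem:constructfrombasicingi} plus smallness, that a finite $X$ is a retract of a finite complex built from basic cells $\sigma_{K_1},\dots,\sigma_{K_n}$, and hence $\gI(X)\subseteq\bigcup_i\Lqst(K_i)$. But then you treat ``finiteness of the maximal antichain'' as a genuinely open problem, when in fact it falls out of a poset observation you have already set up. Lemma \ref{lem:constructfrombasicingi} supplies the $K_i$ from $\Lqst\gI(X)$, and by Proposition \ref{prop:gIctclosed} one has $\Lqst\gI(X)=\gI(X)$, so each $K_i$ lies in $\gI(X)$. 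Now if $K$ is any maximal element of $\gI(X)$, it lies in $\Lqst(K_i)$ for some $i$, i.e.\ $K\le K_i$ under cotoral inclusion; since $K_i\in\gI(X)$ as well, maximality forces $K=K_i$. Thus the maximal elements of $\gI(X)$ are a subset of $\{K_1,\dots,K_n\}$ and the lemma is done. This is essentially the paper's proof, phrased via the ``dual tower.''

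The later material --- finite generation of $\phi^K\piA_*(X)$ over the product ring $\cO_{\cF/K}$ forcing support on finitely many factors, and then an appeal to irreducible components of $\supp H^*_G(X)$ in $\spec H^*(BG)$ to control the identity components --- is not wrong in spirit, but it is both harder to make precise than you acknowledge (you say so yourself) and entirely superfluous once you notice that your generating set $\{K_i\}$ may be taken inside $\gI(X)$. You should delete that portion and close with the elementary observation above.
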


\begin{proof}
By Lemma \ref{lem:constructfrombasicingi}, we may suppose that $X\simeq X' $ where $X'$ is constructed from
basic cells $\sigma_K$ with $K$  from $\Lqst \gI (X)$. Since $X\simeq
X'$, the identity factors as $X\lra X'\lra X$, and since $X$ is small,
we see $X$ is a retract of a finite subcomplex of $X'$. In other
words, $X$ is a retract of  a finite complex using cells
$\sigma_{K_1}, \ldots , \sigma_{K_N}$ with $K_i\in \Lqst \gI (X)$. 
and hence
$$\supp (X)\subseteq \bigcup_i \Lqst (K_i). $$

Forming the dual tower, we see that $X$ may be converted to a point
using the basic cells $\sigma_{K_1}, \ldots , \sigma_{K_N}$. Elements
$K$ of $\gI (X)$  can only be removed if one of the cells is
$\sigma_H$ with $K\in \Lqst H$, and hence the maximal elements
of $\gI (X)$ must be amongst the $K_i$.
\end{proof}

\begin{lemma}
If $A$ is a set of primes with finitely many maximal elements
there is a finite spectrum with $\gI (X)=A$.
\end{lemma}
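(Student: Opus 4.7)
The plan is to exhibit $X$ explicitly as a finite wedge of basic cells indexed by the maximal elements of $A$. The statement is to be read with the implicit understanding that $A$ is cotorally closed, since by Proposition \ref{prop:gIctclosed} every geometric isotropy of a finite spectrum has this property. Enumerate the maximal elements of $A$ as $K_1,\ldots,K_N$ and set
$$X := \sigma_{K_1}\vee\sigma_{K_2}\vee\cdots\vee\sigma_{K_N},$$
with the empty wedge (i.e.\ the zero spectrum) understood when $A=\emptyset$.

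Finiteness of $X$ is immediate: each $\sigma_{K_i}=\basic{K_i}$ is by construction a retract of the classical cell $G/{K_i}_+$, hence itself a finite $G$-spectrum, and a finite wedge of finite spectra is finite. The geometric isotropy calculation uses that $\Phi^K$ commutes with finite wedges together with Lemma \ref{lem:gisigmaK}:
$$\gI(X)\;=\;\bigcup_{i=1}^{N}\gI(\sigma_{K_i})\;=\;\bigcup_{i=1}^{N}\Lqst(K_i).$$

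To finish, one checks this union equals $A$. The inclusion $\bigcup_i\Lqst(K_i)\subseteq A$ follows because each $K_i\in A$ and $A$ is cotorally closed. For the reverse inclusion, observe that inside a torus $G$ a strict cotoral inclusion $L<L'$ forces $\dim L'>\dim L$ (as $L'/L$ is a nontrivial torus), so cotoral chains have bounded length; therefore every $L\in A$ lies below some maximal element $K_i$, i.e.\ $L\in\Lqst(K_i)$. There is no real obstacle in the argument — the lemma is essentially the realization half of a converse to Lemma \ref{lem:finmax}, made transparent by the design of the basic cells (Lemma \ref{lem:gisigmaK}), and the finiteness-of-maximal-elements hypothesis is precisely what permits a single finite wedge to suffice.
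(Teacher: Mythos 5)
Your construction is identical to the paper's: take the wedge $X=\sigma_{K_1}\vee\cdots\vee\sigma_{K_N}$ over the maximal elements and appeal to Lemma \ref{lem:gisigmaK}. You have simply spelled out the two inclusions verifying $\bigcup_i \Lqst(K_i)=A$ and the implicit hypothesis that $A$ is cotorally closed with finite cotoral chains, which the paper leaves tacit.
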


\begin{proof}
We simply enumerate the maximal elements $K_1, \ldots , K_N$ and take
$X=\sigma_{K_1}\vee\cdots \vee \sigma_{K_N}$. The result follows from
Lemma \ref{lem:gisigmaK}. 
\end{proof}

\begin{lemma}
\label{lem:tthicksigmaK}
 If $\gI (X)\subseteq \Lqst (K_1) \cup \cdots \cup \Lqst (K_N)$
then 
$X\in \tthick(\sigma_{K_1}, \ldots , \sigma_{K_N})$. 
\end{lemma}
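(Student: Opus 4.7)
The argument splits into a reduction step followed by a key structural claim, which I propose to verify via a Koszul-style cofibre tower in the algebraic model.

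\emph{Reduction.} I would first invoke Lemma \ref{lem:constructfrombasicingi} to build $X$ from basic cells $\sigma_L$ with $L \in \Lqst \gI (X)$. Because $G$ is a torus and hence abelian, cotoral inclusion is transitive, so each $\Lqst (K_i)$ is cotorally closed, and the hypothesis upgrades to $\Lqst \gI (X) \subseteq \bigcup_i \Lqst (K_i)$. Every basic cell used is therefore $\sigma_L$ for some $L \leq K_i$, and the lemma reduces to the key claim: \emph{if $L \leq K$ then $\sigma_L \in \thick (\sigma_K) \subseteq \tthick (\sigma_K)$.}

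\emph{Key claim.} For this I would pass to the algebraic model $\cA (G)$. Since $\tilde K / \tilde L$ is a torus of dimension $d = \dim K - \dim L$, the rational Serre spectral sequence for the fibration $B(\tilde K/\tilde L) \to BG/\tilde L \to BG/\tilde K$ collapses to give a ring isomorphism
$$H^*(BG/\tilde L) \iso H^*(BG/\tilde K) \otimes \Q[s_1, \ldots , s_d],$$
with $s_1, \ldots , s_d$ the degree-two Euler classes of the rank-one representations of $\tilde K/\tilde L$. Unwinding the formula for basic cells recorded in \S\ref{sec:AG}, at each connected subgroup $M \subseteq L$ one obtains
$$\sigma_L (M) = \cEi_M \cOcF \tensor_{H^*(BG/\tilde L)} \Q \iso \sigma_K (M) / (s_1, \ldots , s_d),$$
while both sides vanish elsewhere. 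The $s_j$ act on the whole diagram $\sigma_K$ through the composition $\Q[s_1, \ldots , s_d] \hookrightarrow H^*(BG/\tilde L) \to \cOcF$, yielding morphisms $\Sigma^{|s_j|} \sigma_K \to \sigma_K$ in $\cA (G)$; iterated mapping cones realise the Koszul resolution of $\Q$, terminating after $d$ steps at $\sigma_L$. Each intermediate cofibre sits in $\thick (\sigma_K)$, and transport along the triangulated equivalence $D(\cA (G)) \simeq Ho(\mbox{$G$-spectra}/\Q)$ delivers the topological conclusion.

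\emph{Main obstacle.} The hard part will be verifying that $s_1, \ldots , s_d$ really act as a regular sequence on the diagram $\sigma_K$, so that the iterated cofibre tower computes the derived tensor $\sigma_K \tensor^L_{\Q[s_1, \ldots , s_d]} \Q$ and the final term is precisely $\sigma_L$ rather than a complex with extra Koszul cohomology. Concretely this reduces to pointwise flatness (in fact freeness) of $\cEi_M \cOcF$ over $H^*(B\tilde K/\tilde L)$ at each $M \subseteq L$, which should follow from the decomposition of $\cOcF$ into polynomial factors $H^*(BG/F)$ and the explicit form of the Euler classes.
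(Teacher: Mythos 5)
Your reduction step is sound in outline (modulo explicitly invoking smallness of $X$ to extract a retract of a finite basic-cell complex, as in Lemma~\ref{lem:finmax}), but your ``key claim'' is false as stated, and the error is precisely the one that makes this lemma about tensor ideals rather than thick subcategories. You assert that multiplication by $s_j$ yields morphisms $\Sigma^{|s_j|}\sigma_K \to \sigma_K$ in $\cA(G)$. It does not. The element $s_j$ is the Euler class of a $K$-essential one-dimensional representation $W_j$ of $\tilde K/\tilde L$, and the morphism it realises is $S^{-W_j}\sm\sigma_K \to \sigma_K$, a \emph{representation-sphere} twist of $\sigma_K$, not an integer suspension. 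Concretely, in $\cA_{sf}(\T)$ with $K=\T$, $L=1$: the cofibre sequence $S^{-z}\xrightarrow{c}S^0\to \sigma_1$ exists, but a nonzero map $\Sigma^{-2}S^0\to S^0$ does not (the $\phi^\T$-component is forced to vanish for degree reasons, and the compatibility square with the basing map then forces the $N$-component to vanish too). Indeed, your claimed conclusion $\sigma_L\in\thick(\sigma_K)$ is refuted explicitly in Section~\ref{sec:semifree} of the paper, where it is shown that $T_+=\sigma_1\notin \thick(S^0)=\thick(\sigma_\T)$.

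The fix is routine once identified: the Koszul tower must be built from $S^{-W_j}\sm\sigma_K$, and since these lie in the tensor ideal $\tthick(\sigma_K)$ (not in $\thick(\sigma_K)$), each intermediate cofibre and hence the endpoint $\sigma_L$ lies in $\tthick(\sigma_K)$. That is enough for your reduction and would be a legitimate alternative proof, genuinely different in structure from the paper's, which instead kills top geometric isotropy one maximal $K$ at a time by a direct cell-attachment and dimension-lowering argument rather than first decomposing $X$ into basic cells. But as written, the conflation of $\Sigma^{|s_j|}$ with $S^{-W_j}\sm(-)$ is a real gap, not merely sloppy notation, because it is exactly the point at which the tensor ideal hypothesis enters.

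Your ``main obstacle'' (regularity of the Euler-class sequence on $\sigma_K$) is the lesser worry; the more serious issue, once you correct the twist, is verifying that the Koszul endpoint really is $\sigma_L$ on the nose in $\cA(G)$, i.e.\ that the extended structure and the identification $f_L(\Q_{\tilde L}) \simeq \sigma_K \otimes^L_{\Q[s_1,\ldots,s_d]}\Q$ (with the appropriate representation twist) are compatible at every subgroup $M$. That needs a sentence or two beyond pointwise freeness.
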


\begin{proof}
It suffices to show we can kill top isotropy. Indeed, if $K$ is
maximal in $\gI (X)$ we must show that the isotropy can be killed by
attaching finitely many cells $\sigma_K$. In fact, since
$K$ is maximal,  $T=H^*_{G/K}(\Phi^KDX)$  is torsion, and since it is
also finitely generated, this is a finite
dimensional vector space. We show that this dimension can be reduced
by adding a basic cell $\sigma_K$. 

Indeed, we may define $X'$ by the cofibre sequence
$$X\stackrel{\alpha}\lra f_K(T)\lra X', $$
where $\alpha$ is the identity at $K$. Accordingly, 
 $K$ is not in $\gI (X')$. Picking $t\in T$ of lowest degree $n$
we see there is a map $\alpha: \sigma_K^n \lra f_K(T)$ hitting
$t$. Indeed, we consider the Adams spectral sequence for calculating
$[\sigma_K, f_K(T)]^G_*$. Its $E_2$-term is easy to understand, since
$$\Ext_{\cA (G)}^{*,*}(\piA_*(\sigma_K), \piA_*(f_K(T))
=\Ext_{H^*(BG/K)}^{*,*}(\piA_*(\sigma_K)(K), T)=
\Ext_{H^*(BG/K)}^{*,*}(\Q, T),  $$
and we see by degree that $t\in \Hom_{H^*(BG/K)}^{*}(\Q, T)$ is an infinite cycle. 

\begin{lemma}
  Since $K\not \in \gI (X')$ the module 
$[\sigma_K, X']$ is annihilated by $\cE_K$
\end{lemma}

\begin{proof}
First note that by the nature of injectives, the property that $X'(K)=0$ is inherited by its injective resolution. 
It therefore suffices to show $\Hom (\sigma_K, Y')$ is torsion when $Y'(K)=0$. Next, note that a map from $\sigma_K$ 
is determined by its value at 1. 
 Now consider the square
$$\begin{array}{ccc}
\sigma_K(K)&\lra & Y'(K)=0\\
\uparrow&&\uparrow\\
\sigma_K(1)&\lra &Y'(1)
\end{array}$$
It follows that the image of each element of $x\in \sigma_K(1)$ is annihilated by an Euler class $e(W_x)$ with $W_x^K=0$. 
Since $\sigma_K(L)$ is finitely generated, there is a single representation $W$  independent of $x$ with $W^K=0$ so that $e(W)$
annihilates the whole image of $\sigma_K(1)$
\end{proof}

By the lemma,  we may find a
$K$-essential representation $W$ so that the right hand diagonal composite 
$$\diagram 
&S^{-W}\sm \sigma_K^n\dto^{e(W)} \ddrto^0 \ar@{.>}[ddl]_{\widetilde{e(W)\alpha}}
&\\
&\sigma^n_K\dto^{\alpha}&\\
X\rto &f_K(T)\rto &X'
\enddiagram$$
is zero, and so $e(W)\alpha$ lifts to a map  $\widetilde{e(W)\alpha}$ to $X$. 

We note that $\alpha$ is nonzero as a map  $\Q \lra T$ (i.e., more precisely, this is the map $\phi^K\alpha$), 
and hence the same is true of $\widetilde{e(W)\alpha}$. Accordingly, if we take $X_1=\cofibre
(\widetilde{e(W)\alpha})$ the vector space $H^*_{G/K}(\Phi^KDX_1)$ is of smaller dimension. After adding 
finitely many cells we reach $X_N$ with
$H^*_{G/K}(\Phi^KDX_1)=0$, so that $K\not \in \gI (X_N)$, and we
have a cofibre sequence
$$X\lra X_N \lra C$$ 
where $C$ is built from suspensions of copies of $\sigma_K$.

By  the triangle property of geometric isotropy, 
$$\gI (X_N)\subseteq \gI (X)\cup \Lqst (K)=\gI (X). $$
In finitely many steps we may use the above to
 remove all top dimensional geometric isotropy. Repeating this
 finitely many times we are left with empty geometric isotropy and
 hence a contractible object by the Geometric Fixed Point Whitehead Theorem. 
\end{proof}

\begin{thm}
\label{thm:sigmaKintthick}
If $K$ is maximal in $\gI (X)$ then $\sigma_K\in\tthick (X)$
\end{thm}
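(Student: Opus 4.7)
The plan is to produce $\sigma_K$ inside $\tthick(X)$ by combining an isotropy-separation decomposition of $\sigma_K$ with an Adams spectral sequence argument in the algebraic model. First I would reduce to the convenient case $\gI(X) \subseteq \Lqst(K)$ by replacing $X$ with $X \sm \sigma_K$: by Lemma \ref{lem:gisigmaK} this has geometric isotropy $\gI(X) \cap \Lqst(K)$ with $K$ still maximal, it manifestly lies in $\tthick(X)$, and so it suffices to prove the theorem for $X \sm \sigma_K$.

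Let $\cF = \{H \st K \not\subseteq H\}$ and consider the cofibre sequence $\ef \to S^0 \to \etf$. Smashing it with $\sigma_K$, it is enough to place both end-terms of
$$\sigma_K \sm \ef \to \sigma_K \to \sigma_K \sm \etf$$
in $\tthick(X)$. The top piece $W := \sigma_K \sm \etf$ has $\gI(W) = \{K\}$ with algebraic value $\Q$ at $K$, while the parallel object $X' := X \sm \etf \in \tthick(X)$ is likewise concentrated at $K$ but with non-zero dualizable $\cO_{\cF/K}$-module $\phi^K X$ at $K$. The key technical step is to realise $W$ as a retract of an expression built from $X'$ and its Spanier--Whitehead dual $DX'$: at $\phi^K$ this amounts to showing that the unit $\Q$ lies in the thick tensor ideal generated by the non-zero dualizable complex $\phi^K X$, and the Adams spectral sequence of Section \ref{sec:AG} then lifts the resulting projector to a genuinely $G$-equivariant splitting, so $W \sm X' \sm DX' \in \tthick(X)$ places $W$ in $\tthick(X)$.

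For the bottom piece $\sigma_K \sm \ef$, Lemma \ref{lem:constructfrombasicingi} presents it as built from basic cells $\sigma_L$ with $L$ a proper cotoral subgroup of $K$. I would proceed by induction on the cotoral height of $K$: the object $X \sm \ef \in \tthick(X)$ has every cotoral-maximal element of its geometric isotropy strictly below $K$ (and still contains its cotoral closure by Proposition \ref{prop:gIctclosed}), so the inductive hypothesis applied to $X \sm \ef$ produces $\sigma_L \in \tthick(X \sm \ef) \subseteq \tthick(X)$ for every such $L$. Consequently $\sigma_K \sm \ef \in \tthick(X)$, and the cofibre sequence above then yields $\sigma_K \in \tthick(X)$.

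The main obstacle lies in the retraction step for $W$. The naive unit--counit composite $\Q \to \phi^K X \otimes (\phi^K X)^{\vee} \to \Q$ is multiplication by $\chi(\phi^K X)$ and can vanish even when $\phi^K X$ is non-zero. The fix is to split off a non-trivial summand of $\phi^K X$ as a dualizable $\cO_{\cF/K}$-complex (using that rationally every finitely generated module over $H^*(BG/\tilde K)$ has $\Q$ in its thick tensor ideal when supported at the origin, which is forced by the freeness of $\Phi^K X$ as a $G/K$-spectrum after our reduction), and then to lift the resulting algebraic projector through the Adams spectral sequence while maintaining compatibility with the diagram structure of $\cA(G)$. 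This lifting, and the control of convergence, is precisely where the explicit algebraic model is indispensable.
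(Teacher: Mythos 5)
Your approach is genuinely different from the paper's: you try to reach $\sigma_K$ via the isotropy--separation triangle $\sigma_K\sm\ef\to\sigma_K\to\sigma_K\sm\etf$, whereas the paper constructs (by attaching finitely many copies of $\sigma_K$ as in Lemma~\ref{lem:tthicksigmaK}) an explicit \emph{finite} complex $\FC$ with a map $X\to\FC$ that is an isomorphism at $K$, kills top isotropy in its cofibre, and then extracts $\sigma_K$ from $\FC$ by Hopkins's tensor--nilpotence trick. Unfortunately your version has a genuine gap centred on finiteness, which is exactly the difficulty the paper's construction of $\FC$ is designed to avoid.

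Concretely: $\ef$ and $\etf$ are not finite $G$-spectra, so none of $\sigma_K\sm\ef$, $\sigma_K\sm\etf$, or $X\sm\ef$ are objects of the category of finite rational $G$-spectra in which $\tthick(X)$ lives. When you write ``the inductive hypothesis applied to $X\sm\ef$ produces $\sigma_L\in\tthick(X\sm\ef)$,'' you are applying the theorem being proved to a non-finite spectrum, which is outside its scope; worse, $\gI(X\sm\ef)$ generally has \emph{infinitely} many cotorally maximal elements (e.g.\ for the circle with $K=\T$ the set of finite cyclic subgroups in $\gI(X)$ may be infinite), so there is no single dimension bound on which to induct and Lemma~\ref{lem:finmax} does not apply. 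Even granting (by Lemma~\ref{lem:constructfrombasicingi} and a limit argument) that both end-terms lie in the \emph{localizing} tensor ideal generated by $X$, passing from there back to the thick tensor ideal of compacts requires a Neeman--Thomason--type argument that you do not supply and that the paper avoids entirely. Secondly, the ``top piece'' step is only a sketch: after observing that $\Q\in\tthick_\otimes(\phi^KX)$ in $D(H^*(BG/K))$, you need to realise a corresponding retraction $\sigma_K\sm\etf\to X\sm\etf\sm D(X\sm\etf)\to \sigma_K\sm\etf$ of non-compact $G$-spectra, and lifting an $E_2$-level projector through the Adams spectral sequence to an actual idempotent is not automatic (idempotents need not lift through a filtered spectral sequence, and the SW-dual of a non-finite spectrum is not well-behaved). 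The paper circumvents both issues at a stroke by constructing a \emph{finite} model $\FC=\Sigma^W\FC'$ of $f_K(T)$, using the annihilation-by-Euler-class lemma to choose the suspension $W$ so that the map $X\to f_K(T)$ factors through $\FC$; then everything in sight is finite, the cofibre $Y$ is finite with strictly smaller isotropy, and the (well-founded, because restricted to finite objects) induction on $\dim K$ goes through.
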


\begin{proof}
For the particular group $G$ we will argue by induction on the
dimension of the subgroup $K$ maximal in $\gI (X)$. If $K$ is 0
dimensional then $f_1(X(K))$ is a retract of $X$, and $X(K)$ is a
torsion  module over $H^*(BG/K)$. It is well known that any two
finitely generated torsion modules over $H^*(BG/K)$ generate the same thick subcategory,
so $\Q$ is finitely built by $X(K)$ and $\sigma_K$ is finitely built
by $f_1(X(K))$. 

Now suppose $K$ is of dimension $d$   and that the result is
true for those of smaller dimension.  
If $L$ is a proper cotoral subgroup of $K$ then $L$ is a maximal
subgroup of $\gI (X\sm \sigma_L)$ of smaller dimension, and hence  by induction 
$\sigma_L\in \tthick (X\sm \sigma_L)\subseteq \tthick (X)$. We will
use this Inductive Observation several times below.  It remains
to show that $\sigma_K$ is in $\tthick (X)$.

As a warm-up, we will first consider the special case $K=G$ (those
fluent with $\cA (G)$ can skip straight to the general case). 
Now $X(G)=\cEi_G\cOcF\tensor V$ for a graded vector space $V$, and
there is a map $i: X\lra f_G(V)$ which is the identity at $G$. However
$f_G(V)$ is not a finite spectrum. We
claim that the image of $i$ lies inside a wedge of  spheres $WS$.

Note that since $X$ is finite, $V$ is finite dimensional and we may
write $V=\bigoplus_i \Sigma^{n_i}\Q$.  The most obvious finite spectrum $Y$ with 
$\Phi^GY=V$ is $\bigvee_i S^{n_i}$, but typically the image of $X$
will not be a subspace of this.  However, if we suspend this wedge of
spheres by any representation $W$ with $W^G=0$ we obtain another candidate. 
On the other hand, 
the quotient of $f_G(V)(1)=\cEi_G\cOcF \tensor V$ by $iX(1)$ 
is $\cE_G$-torsion, and by finiteness of $X$ there is a single complex
representation $W$ so that the Euler class $c(W)$ annihilates it. 
and the sum is finite since $X$ (and hence $\Phi^GX$) is finite. 

Hence we find
$$i(X)\subseteq S^{-W} \sm \bigvee_i S^{n_i} =:WS. $$
We may now consider the cofibre sequence
$$X\lra WS\lra Y. $$
Since $X$ and $WS$ are small so is $Y$.  Since $X \lra WS$ is an
isomorphism  at $G$, the group $G$ is not in the geometric isotropy of
$Y$, so $Y$ is constructed from  $\sigma_L$ with $L$ proper. By the
Induction Observation these basic cells can be built from $X$, so  $WS$ is in $\tthick (X)$. 
Any one of the spheres $S^{n_i-W}$ is a retract of $WS$ and a
suspension of it is in the thick tensor ideal, so that 
$$S^0 \in \tthick (WS) \subseteq \tthick(X). $$

Now we turn to general case.  First, $X(K)=\cEi_K\cOcF\tensor_{\cOcFK} T$ for a torsion
$\cOcFK$-module $T$. By the maximality of $K$ and the adjointness
property of $f_K$, there is a map $i: X\lra f_K(T)$ which is the
identity at $K$. However $f_K(T)$ will typically not be a finite
spectrum.

We construct a finite complex $\FC'$ from suspensions of copies of
$\sigma_K$ to play the role of $f_K(T)$. Indeed, we use the  the procedure of
Lemma \ref{lem:tthicksigmaK} to construct  a map $f': \FC' \lra f_K(T)$ which is an isomorphism 
on $\phi^K$. The quotient of $f_K(T)(1)=\cEi_K\cOcF\tensor_{\cOcFK}T$ by $iX(K)$ is $\cE_K$-torsion, 
so there is a representation $W$ with $W^K=0$ annihilating it. Accordingly we may take $\FC=\Sigma^W\FC'$
and obtain a map  $f: \FC=\Sigma^W\FC' \lra f_K(T)$ which is the identity at $K$ and with 
$\im (i) \subseteq \im (f)$. 

We may now consider the cofibre sequence
$$X\lra \FC \lra Y. $$
Since $X$ and $\FC $ are small so is $Y$.  Since $X \lra \FC$ is an
isomorphism  at $K$, the group $K$ is not in the geometric isotropy of
$Y$, so $Y$ is constructed from  $\sigma_L$ with $L$ a proper cotoral
subgroup of $K$. By the Inductive Observation, these basic cells can be built from $X$, so  $\FC$ is in
$\tthick (X)$. 

Finally, $\sigma_K \in \tthick (\FC)$. This can be proved in various ways, and we will use Hopkins's method
 \cite{Hopkins}. Suppose the top dimensional
basic  cells of $\FC$ are of dimension $n$. They are all attached to 
the basic $(n-1)$-skeleton, so we can add them in any order. In
particular, there is a map $\FC \lra \sigma^n_K$ which is surjective
in $H_*$. Accordingly the mapping cone $f:\sigma^n_K \lra R$ is zero in homology and hence
tensor nilpotent. Now $\Sigma \FC =C(f)$ so $C(f)\tensor \sigma_K \in \tthick(\FC)$, and hence
$C(f^n)\tensor k \in \tthick (\FC)$. If $f^n\simeq 0$ then  $\sigma_K$ is a retract of $C(f^n)$. 
\end{proof}

\begin{cor}
\label{cor:tthick}
 If $X$ and $Y$ are finite spectra with $\gI (X)=\gI (Y)$ then 
$\tthick (X)=\tthick(Y)$. \qqed
\end{cor}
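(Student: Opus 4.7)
The plan is to sandwich $X$ and $Y$ inside each other's thick tensor ideals by exploiting the three main results already proved in this section. The symmetry of the hypothesis $\gI(X)=\gI(Y)$ means it is enough to show $X\in\tthick(Y)$; the reverse containment is then obtained by swapping the roles.

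First, I would appeal to Lemma \ref{lem:finmax} to fix finitely many maximal elements $K_1,\dots,K_N$ of the common geometric isotropy $\gI(X)=\gI(Y)$. Because geometric isotropy is closed under cotoral inclusion (Proposition \ref{prop:gIctclosed}), this gives the clean description
\[
\gI(X)=\gI(Y)=\Lqst(K_1)\cup\cdots\cup\Lqst(K_N).
\]

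Next, since each $K_i$ is maximal in $\gI(Y)$, Theorem \ref{thm:sigmaKintthick} places every basic cell $\sigma_{K_i}$ in $\tthick(Y)$. On the other hand, Lemma \ref{lem:tthicksigmaK}, applied to $X$ with the above covering of $\gI(X)$, yields $X\in\tthick(\sigma_{K_1},\dots,\sigma_{K_N})$. Chaining these two inclusions gives $X\in\tthick(Y)$. Running the same argument with $X$ and $Y$ interchanged produces $Y\in\tthick(X)$, and hence $\tthick(X)=\tthick(Y)$.

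There is essentially no obstacle here: the real work has already been done in Theorem \ref{thm:sigmaKintthick} (extracting $\sigma_K$ from an $X$ whose geometric isotropy has $K$ as a maximal element) and Lemma \ref{lem:tthicksigmaK} (showing the $\sigma_{K_i}$ at the maximal subgroups are sufficient to rebuild $X$). The corollary is just the combinatorial consequence of these two facts together with the finiteness statement of Lemma \ref{lem:finmax}.
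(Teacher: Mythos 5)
Your proposal is correct and follows essentially the same route as the paper: fix the finitely many maximal elements $K_1,\dots,K_N$ of $\gI(X)=\gI(Y)$ (Lemma \ref{lem:finmax}), use Theorem \ref{thm:sigmaKintthick} to place each $\sigma_{K_i}$ in each ideal, and use Lemma \ref{lem:tthicksigmaK} to rebuild each spectrum from those basic cells; the paper phrases this as showing both thick tensor ideals equal $\tthick(\sigma_{K_1},\dots,\sigma_{K_N})$, while you show the two mutual containments directly and invoke symmetry, which is the same argument.
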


\begin{proof}
 If $K_1, \ldots , K_N$ are the maximal elements of $\gI (X)$ then
$\sigma_{K_i} \in \tthick (X)$ by Theorem \ref{thm:sigmaKintthick}. 
Hence $\tthick (X) \supseteq \bigcup_i \Lqst (K_i)$ by Lemma
\ref{lem:tthicksigmaK}. The reverse inclusion is obvious. 
\end{proof}

Some may prefer the following reformulation. A thick tensor ideal is
called {\em finitely generated} if it is generated by a finite number
of  small spectra (or equivalently, by just one). 

\begin{cor}
\label{cor:geomiso}
The finitely generated thick tensor ideals of finite rational $G$-spectra are precisely
the fibres of geometric isotropy
$$\gI : \mbox{finite-rational-$G$-spectra}\lra \mathcal{P}(\sub
(G)); $$
the image consists of collections of subgroups with finitely many
cotorally maximal elements that are
closed under cotoral specialization. 
\end{cor}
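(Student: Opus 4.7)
The plan is to establish the claimed bijection by verifying that $\gI$ is constant on each thick tensor ideal (so descends to a well-defined map out of thick tensor ideals), that finite spectra with the same geometric isotropy generate the same thick tensor ideal, and that the image of $\gI$ on finite spectra consists precisely of the admissible subsets: those closed under cotoral specialization with only finitely many cotorally maximal elements.

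First I would check monotonicity: if $Y\in \tthick(X)$ then $\gI(Y)\subseteq \gI(X)$. This follows because each geometric fixed point functor $\Phi^K$ is exact, preserves retracts, and is strong symmetric monoidal. Rationally the K\"unneth formula yields $\gI(X\sm Z)=\gI(X)\cap \gI(Z)$ (smashing with an arbitrary object can only shrink $\gI$); retracts can only shrink $\gI$; and the middle term of a triangle has $\gI$ contained in the union of the outer two. Applying these closure properties along a construction of $Y$ from $X$ gives the inclusion. By symmetry, if $\tthick(X)=\tthick(Y)$ then $\gI(X)=\gI(Y)$. The converse is exactly Corollary \ref{cor:tthick}. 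Together these show that, among finite spectra, generating the same thick tensor ideal is equivalent to having the same geometric isotropy, so the finitely generated thick tensor ideals are precisely the fibres of $\gI$ (interpreted as equivalence classes under $\gI(X)=\gI(Y)$).

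Next I would identify the image. Proposition \ref{prop:gIctclosed} gives closure under cotoral specialization, and Lemma \ref{lem:finmax} gives finitely many cotorally maximal elements, so the image of $\gI$ on finite spectra lies inside the admissible subsets. For surjectivity, given an admissible $A$ with cotorally maximal elements $K_1,\ldots,K_N$, the finite spectrum $\sigma_{K_1}\vee \cdots\vee \sigma_{K_N}$ has geometric isotropy $\Lqst(K_1)\cup\cdots\cup \Lqst(K_N)=A$ by Lemma \ref{lem:gisigmaK}, since geometric fixed points commute with wedges.

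There is no new obstacle; the proof is essentially an assembly of the preceding work. The monotonicity of $\gI$ under the thick tensor ideal operations is a routine check, and the nontrivial direction (equal geometric isotropy forces equal thick tensor ideal) has already been packaged into Corollary \ref{cor:tthick} via Theorem \ref{thm:sigmaKintthick} and Lemma \ref{lem:tthicksigmaK}. The hardest conceptual step in the corollary is thus already spent upstream: producing $\sigma_K$ inside $\tthick(X)$ for each $K$ cotorally maximal in $\gI(X)$.
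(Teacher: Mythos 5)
Your proof is correct and is essentially the assembly the paper intends: the corollary carries no proof environment in the source, being stated as an immediate consequence of Corollary~\ref{cor:tthick} (equal geometric isotropy gives equal $\tthick$), the routine monotonicity of $\gI$ under triangles, retracts and smashing (which gives the converse), and Proposition~\ref{prop:gIctclosed} together with Lemma~\ref{lem:finmax} and the wedge-of-$\sigma_{K_i}$ construction for identifying the image. Your write-up simply makes that implicit assembly explicit, so it matches the paper's approach.
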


\begin{remark}
The finite generation hypothesis is necessary, since for example the
collection of finite $\cF$-spectra is a thick tensor ideal but its
geometric isotropy has infinitely many maximal elements. 
\end{remark}

%\begin{lemma}
%\label{lem:finchains}
%The length of any cotoral chain in $G$ is bounded  by the rank. 
%\end{lemma}

\section{Primes}
\label{sec:primes}

It is interesting to reformulate our classification in terms of Balmer
prime spectra. In fact, the calculation of Balmer spectra was the
initial objective of this project, but it only made progress when
formulated in terms of geometric isotropy and thick tensor ideals. In
effect this shows the power of considering supports. 

\subsection{Background}
We recall the basic definitions from \cite{Balmer1}. 

\begin{defn}
A {\em prime ideal} in a tensor triangulated category is a thick
proper tensor ideal $\wp$ with the
property that $ab\in \wp$ implies that $a$ or $b$ is in $\wp$. 

The {\em Balmer spectrum} of a tensor-triangulated category $\C $ is $\spc (\C)=\{ \wp \st \wp \mbox{
  is prime } \}.$ The Zariski topology on $\spc (\C)$ is the one
 generated by the closed sets $\supp (X)=\{ \wp \st X\not \in \wp\}$ as $X$ runs through objects of $\C$. 
\end{defn}

\begin{thm}
If $G$ is a torus then 
$\spc (\fGspecQ)$ is the partially ordered set
$\sub_a(G)$ whose elements are closed subgroups with $L\leq K$ if $L\subseteq K$
with $K/L$ a torus. 
\end{thm}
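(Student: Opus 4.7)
The plan is to construct an explicit bijection $\phi:\sub_a(G)\to\spc(\fGspecQ)$ by $K\mapsto\wp_K:=\{X\text{ finite}\st K\not\in\gI(X)\}$, and verify it identifies cotoral inclusion on the left with the inclusion order of prime ideals on the right.

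The easy parts come first. Each $\wp_K$ is thick and a tensor ideal by exactness of $\Phi^K$ together with the identity $\Phi^K(X\sm Y)\simeq \Phi^K X\sm \Phi^K Y$, and it is prime by the rational K\"unneth formula $\pi_*(\Phi^K X\sm \Phi^K Y)=\pi_*(\Phi^K X)\tensor_{\Q} \pi_*(\Phi^K Y)$, which forces one factor to vanish whenever the product does. The equivalence $\wp_K\subseteq \wp_L\Leftrightarrow K\leq L$ follows from Lemma \ref{lem:gisigmaK} (which gives $\sigma_L\not\in\wp_K\Leftrightarrow K\leq L$) by testing against $X=\sigma_L$ in one direction and using cotoral closedness of $\gI(X)$ (Proposition \ref{prop:gIctclosed}) in the other; antisymmetry of cotoral inclusion then gives injectivity of $\phi$.

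Surjectivity is the main obstacle. Given a prime $\wp$, set $B:=\{L\st\sigma_L\in\wp\}$; by Corollary \ref{cor:geomiso} this $B$ is cotorally closed and $X\in\wp\Leftrightarrow\gI(X)\subseteq B$. It will suffice to show the complement $B^c$ has a unique \emph{minimum} element $K$ in cotoral order: given such $K$, cotoral closedness of $\gI(X)$ yields $\gI(X)\cap B^c\neq\emptyset\Leftrightarrow K\in\gI(X)$, whence $\wp=\wp_K$. Two inputs combine to produce $K$. First, primality supplies a meet property: if $K_1,K_2\in B^c$ then $\sigma_{K_1}\sm\sigma_{K_2}\not\in\wp$, so $\gI(\sigma_{K_1}\sm\sigma_{K_2})=\Lqst(K_1)\cap\Lqst(K_2)\not\subseteq B$, yielding $M\in B^c$ with $M\leq K_1,K_2$. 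Second, the cotoral order satisfies DCC, since any strict cotoral descent $K_1>K_2>\cdots$ has each quotient $K_i/K_{i+1}$ a nontrivial torus and so $\dim K_i$ strictly decreases. DCC guarantees that $B^c$ (nonempty since $S^0=\sigma_G\not\in\wp$) has minimal elements, and the meet property forces uniqueness: two minimal elements admit a common lower bound in $B^c$, which must coincide with each. The resulting $K$ is then the unique element with $\wp=\wp_K$, completing surjectivity and hence the identification of posets.
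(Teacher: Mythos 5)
Your proof is correct and follows the same overall path as the paper: define $\wp_K=\{X : K\notin\gI(X)\}$, verify primality and that containments of the $\wp_K$ realize cotoral inclusion (the paper does exactly this in Section~\ref{sec:primes}), and then reduce surjectivity of $K\mapsto\wp_K$ to the tensor-ideal classification of Section~\ref{sec:giforfinite}. The one place where your argument genuinely diverges is the exhaustion step. The paper (Theorem~\ref{thm:allprimes} via Lemma~\ref{lem:primenotint}) writes $\wp=\bigcap_{L\in A}\wp_L$ and argues: if the minimal element is not unique, pick a \emph{non-redundant} $L\in A$, split the intersection, and contradict primality. This is brief and quietly relies on the existence of a non-redundant term (which on an infinite descending chain could fail); it works here because of the same DCC you invoke, but the paper never says so. You instead isolate exactly the two facts that make the argument go: (a) primality of $\wp$ gives a \emph{meet property} for $B^c$, via $\gI(\sigma_{K_1}\sm\sigma_{K_2})=\Lqst(K_1)\cap\Lqst(K_2)$, and (b) DCC for cotoral inclusion, from strict dimension drop, so $B^c$ has a unique minimum $K$, and $\wp=\wp_K$ follows formally. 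Your version is a cleaner, fully explicit form of the paper's Lemma~\ref{lem:primenotint}, and I would say it is slightly preferable for that reason.

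Two small points. First, the citation of Corollary~\ref{cor:geomiso} to get ``$B$ is cotorally closed and $X\in\wp\Leftrightarrow\gI(X)\subseteq B$'' is a minor misattribution: that corollary is stated for \emph{finitely generated} thick tensor ideals, and a prime $\wp_K$ is not finitely generated (the paper even flags this). What you actually need is Lemma~\ref{lem:tthicksigmaK} and Theorem~\ref{thm:sigmaKintthick} applied one finite $X$ at a time — exactly the ingredients behind Corollary~\ref{cor:geomiso} — which do give your two claims; the statement is true, just not by direct appeal to the corollary as phrased. Second, the theorem as stated only asserts a poset identification, which you prove; you do not (and need not) reprove the Zariski topology statement that the paper treats separately.
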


It is rather easy to find $\sub_a(G)$ inside $\spc (\fGspecQ)$. The
main work will be to show that we have found all the primes. 

To start with we consider 
$$\wp_K=\{ X\st \Phi^KX\simeq_1 0 \}. $$
To see this is prime we note that $0$ is a prime in rational spectra
(since it is equivalent to chain complexes of $\Q$-modules) and 
$$\wp_K=(\Phi^K)^*((0)) \mbox{ where } \Phi^K: \mbox{$G$-spectra}\lra
\mbox{spectra}. $$

\begin{lemma}
For any finite spectrum $X$, the support in the sense of Balmer for
this set of primes coincides with the geometric isotropy:
$$\supp (X)=\{ H \st X\not \in \wp_H \}=\{ H \st \Phi^HX \not  \simeq
0\}=\gI (X). \qqed $$
\end{lemma}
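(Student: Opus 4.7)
The statement is essentially a chain of definitional unfoldings once we recognise that the lemma is indexing the Balmer support, a priori a set of prime ideals, by the subgroups parametrising the primes $\wp_H$ we just exhibited. So the plan is simply to trace the definitions.

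First, by Balmer's definition, $\supp(X) = \{\wp \st X \not\in \wp\}$ as $\wp$ ranges over prime ideals. Restricting to the particular family $\{\wp_H\}_{H}$ of primes constructed just before the lemma, we get the set $\{H \st X \not\in \wp_H\}$, which is the middle expression in the statement. (Strictly speaking, at this point in the paper we have only exhibited the $\wp_H$ as some primes; the fact that these exhaust $\spc(\fGspecQ)$ is deferred to Section~\ref{sec:exhaust}. So the identification being stated here is: the support of $X$, \emph{regarded via the injection $H \mapsto \wp_H$ as a subset of closed subgroups}, is computed by geometric isotropy. No exhaustion is needed for the lemma itself.)

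Next, unfolding the definition $\wp_H = \{Z \st \Phi^H Z \simeq_1 0\}$, the condition $X \not\in \wp_H$ is equivalent to $\Phi^H X \not\simeq_1 0$. This gives the equality of the second and third sets.

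Finally, the set $\{H \st \Phi^H X \not\simeq_1 0\}$ is, by the definition of geometric isotropy given in the introduction, exactly $\gI(X)$. Concatenating the three equalities yields the lemma; finiteness of $X$ is not actually used. There is no main obstacle, only bookkeeping, which is why the statement carries \verb|\qqed| in place of a proof.
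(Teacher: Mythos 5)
Your proposal is correct and matches the paper's intent exactly: the paper itself treats the lemma as an immediate consequence of the definitions and supplies no proof beyond the terminal \verb|\qqed|, and your unfolding of Balmer support, the definition of $\wp_H$, and the definition of $\gI$ is precisely the implicit argument. Your parenthetical caveat that the exhaustion of primes is not needed here (only the injection $H\mapsto\wp_H$) is also the right reading of the phrase ``for this set of primes'' in the statement.
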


We will in due course show that the $\wp_K$ give all primes, so that
this will be the full Balmer support. 

\begin{lemma}
The intersection of all these primes is trivial 
$$\bigcap_{K}\wp_K=0.$$
\end{lemma}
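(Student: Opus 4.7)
The plan is to reduce the claim to a direct invocation of the Geometric Fixed Point Whitehead Theorem stated earlier in the paper. The only mild subtlety is unpacking what ``$\bigcap_K \wp_K = 0$'' means: by the definition of $\wp_K$, an object $X$ lies in this intersection if and only if $\Phi^K X \simeq_1 0$ for every closed subgroup $K$ of $G$. Thus the statement is equivalent to the assertion that a (finite) rational $G$-spectrum all of whose geometric fixed points vanish non-equivariantly must itself be $G$-contractible. The reverse containment $0 \subseteq \wp_K$ is obvious since $\Phi^K$ preserves zero objects.

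To finish, I would apply the Geometric Fixed Point Whitehead Theorem to the map $X \to 0$. By hypothesis, on $K$-geometric fixed points it becomes $\Phi^K X \to 0$, which is a non-equivariant equivalence for every closed $K \leq G$; hence $X \to 0$ is a $G$-equivalence, and $X \simeq 0$ as required.

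There is no real obstacle at this stage: all the content has been pushed upstream into the Geometric Fixed Point Whitehead Theorem, which is in turn built on isotropy separation to promote a check on geometric fixed points to one on categorical fixed points. The same observation is what makes the primes $\wp_K$ natural candidates in the first place, so the lemma should really be read as confirming that this family of primes collectively detects the zero object, which is exactly what one needs in order for $\{\wp_K\}_K$ to have a chance of exhausting the Balmer spectrum (the latter exhaustion being the subject of the subsequent section).
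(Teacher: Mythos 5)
Your proof is correct and follows exactly the paper's argument: unwind the definition of $\bigcap_K \wp_K$ to the statement that all geometric fixed points vanish, then invoke the Geometric Fixed Point Whitehead Theorem. No difference from the paper's own proof.
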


\begin{proof}
This says that any spectrum with trivial geometric
fixed points for all subgroups is contractible, which is the Geometric
Fixed Point Whitehead Theorem. 
\end{proof}

We should note that these tensor ideals are not finitely generated. 
\begin{lemma}
The geometric isotropy of $\wp_K$ is 
$$\gI (\wp_K)=\{ H \st K\not \leq H\}=\bigcup_{K\not \leq
  H}\Lambda_{ct}(H). $$
Thus $\sigma_H\in \wp_K$ whenever $K\not \leq H$.\qqed
 \end{lemma}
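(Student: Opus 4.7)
The plan is to prove both containments in the displayed equality and then read off the second assertion as an immediate consequence. The key ingredients are the cotoral closure of geometric isotropy from Proposition \ref{prop:gIctclosed} together with the computation $\gI(\sigma_H)=\Lqst(H)$ from Lemma \ref{lem:gisigmaK}.

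First I would establish $\gI(\wp_K)\subseteq\{H\st K\not\leq H\}$. Suppose $X\in\wp_K$ is finite and $H\in\gI(X)$. If we had $K\leq H$, then by Proposition \ref{prop:gIctclosed} the set $\gI(X)$ is closed under passage to cotoral subgroups, so $K\in\gI(X)$ and $\Phi^KX\not\simeq_1 0$, contradicting $X\in\wp_K$. Hence no element of $\gI(X)$ can have $K$ cotoral in it, giving the desired containment on taking the union over $X\in\wp_K$.

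For the reverse containment, given any closed subgroup $H$ with $K\not\leq H$, I would simply exhibit a witness inside $\wp_K$. The natural candidate is the basic cell $\sigma_H$: by Lemma \ref{lem:gisigmaK}, $\gI(\sigma_H)=\Lqst(H)$, which certainly contains $H$; and the hypothesis $K\not\leq H$ says exactly that $K\notin\Lqst(H)$, so $\Phi^K\sigma_H\simeq_1 0$ and $\sigma_H\in\wp_K$. This simultaneously yields the reverse containment $\{H\st K\not\leq H\}\subseteq\gI(\wp_K)$ and the second sentence of the lemma.

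Finally, the alternative description $\{H\st K\not\leq H\}=\bigcup_{K\not\leq H}\Lqst(H)$ is a formal consequence of transitivity of cotoral inclusion: the $\supseteq$ direction says that if $K\not\leq H$ and $H'\in\Lqst(H)$, i.e.\ $H'\leq H$, then $K\leq H'$ would force $K\leq H$ by transitivity, a contradiction, so $K\not\leq H'$; the $\subseteq$ direction is immediate from $H\in\Lqst(H)$. The only mild subtlety is the transitivity of cotoral inclusion, which is standard; no step presents a serious obstacle, as the result is essentially an unpacking of definitions driven by the geometric isotropy computation for basic cells.
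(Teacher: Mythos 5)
Your proof is correct and is exactly the unpacking the paper leaves implicit (the lemma is stated with no proof in the paper). Both containments follow from the two ingredients you cite — $\gI(\sigma_H)=\Lqst(H)$ from Lemma \ref{lem:gisigmaK} and the cotoral closure of $\gI$ from Proposition \ref{prop:gIctclosed} — and the final equality of sets is, as you note, a direct consequence of transitivity of cotoral inclusion among subgroups of a torus.
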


\subsection{The lattice of subgroup primes}
The Localization Theorem shows that containment of subgroup primes
precisely corresponds to cotoral inclusion.

\begin{lemma}
 $\wp_L \subseteq \wp_K$ if and only if $L\subseteq K$ with $K/L$
a torus. 
\end{lemma}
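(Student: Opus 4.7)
The plan is to reformulate the containment of prime ideals as a statement about geometric isotropy, and then dispatch the two directions using the ingredients already assembled. Since $X\in\wp_H$ iff $H\notin\gI(X)$, the containment $\wp_L\subseteq\wp_K$ is equivalent (by contraposition) to the implication $K\in\gI(X)\Longrightarrow L\in\gI(X)$ for every finite rational $G$-spectrum $X$.

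For the backward direction, I would assume $L\subseteq K$ with $K/L$ a torus, so $L\leq K$ cotorally. Proposition \ref{prop:gIctclosed} says that $\gI(X)$ is closed under passage to cotoral subgroups for any finite $X$, which gives the required implication at once. For the forward direction I would use $\sigma_K$ as a test object. By Lemma \ref{lem:gisigmaK}, $\gI(\sigma_K)=\Lqst(K)$, and since $K\leq K$ trivially, we have $K\in\gI(\sigma_K)$, so $\sigma_K\notin\wp_K$. The hypothesis $\wp_L\subseteq\wp_K$ then forces $\sigma_K\notin\wp_L$, i.e.\ $L\in\gI(\sigma_K)=\Lqst(K)$, which is precisely the cotoral inclusion $L\leq K$.

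I do not expect a genuine obstacle here: the whole point is that earlier sections have arranged geometric isotropy to behave well under cotoral inclusion (for the backward direction) and constructed the basic cells $\sigma_K$ with geometric isotropy \emph{exactly} $\Lqst(K)$ (for the forward direction), so the proof is essentially a matter of packaging. The one point requiring attention is to remember that the Balmer spectrum is computed in the subcategory of finite spectra, so the finiteness hypothesis in Proposition \ref{prop:gIctclosed} is automatic, and $\sigma_K$ is itself a legitimate finite test object.
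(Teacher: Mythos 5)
Your proof is correct and takes essentially the same approach as the paper: for the cotoral direction you cite Proposition \ref{prop:gIctclosed} (itself the packaged Localization Theorem, which the paper re-derives inline), and for the converse you test with $\sigma_K$, using $\gI(\sigma_K)=\Lqst(K)$. The one minor streamlining is that the paper splits the converse into two cases, testing with $G/K_+$ when $L\not\subseteq K$ and only reaching for $\sigma_K$ when $L\subseteq K$ is non-cotoral, whereas you note that $\gI(\sigma_K)=\Lqst(K)$ handles both at once.
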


\begin{proof}
First suppose $L$ is cotoral in $K$. We need to show that
$\Phi^LX\simeq 0$ (non-equivariantly)  implies $\Phi^KX\simeq 0$
(non-equivariantly). This  follows from the Localization
Theorem. In more detail, we may as well consider the $K/L$-spectrum
$Y=\Phi^LX$.  The hypothesis is that $Y$ is nonequivariantly
contractible (ie that $EK/L_+\sm Y\simeq 0$) and the conclusion is
that $\Phi^{K/L}Y$ is nonequivariantly contractible. However the
localization theorem states that for finite $K/L$-complexes $Z$ the map
$$H_{K/L}^*(Z)\lra  H_{K/L}^*(\Phi^{K/L}Z)=H^*(BK/L)\tensor H^*(\Phi^{K/L}Z)$$
becomes an isomorphism when $\cE_{K/L}$ is inverted. Taking $Z=DY$
gives the required result since $\cEi_{K/L}H^*(BK/L)$ is nonzero. 

If $L$ is not a subgroup of $K$ then $G/K_+$ has trivial $L$-fixed
points and non-trivial $K$-fixed points so $\wp_L\not \subseteq
\wp_K$. Finally,  suppose $L$ is a subgroup of $K$, but not cotoral. 
We may consider the $K$-spectrum $\sigma_K$; by Lemma \ref{lem:gisigmaK} 
$\Phi^K\sigma_K\not\simeq 0$ but $\Phi^L\sigma_K\simeq 0$ so 
$\wp_L\not \subseteq \wp_K$.
\end{proof}

\section{The subgroup primes exhaust the primes}
\label{sec:exhaust}
Finally, in this section we deduce the structure of the Balmer
spectrum from our classification of thick tensor ideals. The main
point is to show that all primes are of the form $\wp_H$ for some
closed subgroup $H$.

\subsection{Thick tensor ideals and primes}
The relationship between thick tensor ideals and primes is easily
deduced from the classification of thick tensor ideals Corollary \ref{cor:tthick}.  

\begin{cor}
The thick subcategory generated by a finite rational spectrum $X$ is
an intersection of primes: 
$$\tthick(X)=\bigcap_{K\not \in \gI (X)}\wp_K.\qqed $$
\end{cor}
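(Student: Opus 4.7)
The plan is to prove the two inclusions separately, using the classification theorem from Section \ref{sec:giforfinite} for the nontrivial direction.

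\textbf{Easy direction ($\subseteq$).} For each $K \not\in \gI(X)$, the definition of geometric isotropy gives $\Phi^K X \simeq_1 0$, so $X \in \wp_K$. Since $\wp_K$ is a (prime, and in particular thick) tensor ideal, we obtain $\tthick(X) \subseteq \wp_K$. Intersecting over all such $K$ gives $\tthick(X) \subseteq \bigcap_{K \not\in \gI(X)} \wp_K$.

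\textbf{Reverse direction ($\supseteq$).} Suppose $Y$ lies in $\bigcap_{K \not\in \gI(X)} \wp_K$. Unwinding the definition of $\wp_K$, this simply says $\Phi^K Y \simeq_1 0$ whenever $K \not\in \gI(X)$, i.e., $\gI(Y) \subseteq \gI(X)$. Assuming first that $Y$ is finite, I would apply Lemma \ref{lem:finmax} to list the finitely many cotorally maximal elements $K_1, \ldots, K_N$ of $\gI(X)$; by Proposition \ref{prop:gIctclosed}, $\gI(X) = \bigcup_i \Lqst(K_i)$, and therefore $\gI(Y) \subseteq \bigcup_i \Lqst(K_i)$. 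Lemma \ref{lem:tthicksigmaK} then yields $Y \in \tthick(\sigma_{K_1}, \ldots, \sigma_{K_N})$, while Theorem \ref{thm:sigmaKintthick} applied to each $K_i$ (maximal in $\gI(X)$) gives $\sigma_{K_i} \in \tthick(X)$. Combining these, $Y \in \tthick(X)$.

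\textbf{Main obstacle.} The subtle point is handling arbitrary (non-finite) $Y$ in the intersection, since $\tthick(X)$ is closed under tensoring with any object and so is not a priori contained in finite spectra. To deal with a general $Y$ with $\gI(Y) \subseteq \gI(X)$, I would invoke Lemma \ref{lem:constructfrombasicingi} to build $Y$ from basic cells $\sigma_K$ with $K \in \Lqst \gI(Y) \subseteq \gI(X)$; since each such $\sigma_K$ lies in $\tthick(X)$ by Theorem \ref{thm:sigmaKintthick} (applied to the maximal $K_i \geq K$) together with Lemma \ref{lem:tthicksigmaK}, and $\tthick(X)$ is closed under the operations used in the construction, we conclude $Y \in \tthick(X)$. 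The whole argument is essentially a bookkeeping repackaging of Corollary \ref{cor:tthick}/\ref{cor:geomiso} in the language of Balmer primes, with the main content sitting inside Theorem \ref{thm:sigmaKintthick}.
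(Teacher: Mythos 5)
Your proof is correct and follows essentially the same route the paper intends: it is a direct unpacking of Corollary \ref{cor:tthick}/Corollary \ref{cor:geomiso} (translating support containment $\gI(Y)\subseteq\gI(X)$ into membership of $\tthick(X)$ via Lemma \ref{lem:finmax}, Lemma \ref{lem:tthicksigmaK}, and Theorem \ref{thm:sigmaKintthick}), which is exactly what the preceding sentence in the paper says the corollary is deduced from. Your ``main obstacle'' paragraph is addressing a non-issue: the ambient category throughout is $\fGspecQ$, the compact objects, so any $Y$ in the intersection is automatically finite and the general-$Y$ discussion can be dropped.
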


\subsection{Exhaustion}
We finally want to prove that we have found all the primes. 

\begin{thm}
\label{thm:allprimes}
Every prime $\wp$ of $\fGspecQ$ is $\wp_K$ for some closed subgroup $K$.
\end{thm}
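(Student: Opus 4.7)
The plan is to associate to each prime $\wp$ a canonical closed subgroup $K_\wp$ using basic cells, and then verify $\wp=\wp_{K_\wp}$. I begin by considering
$$T(\wp)=\{K\in \sub(G) : \sigma_K\notin \wp\},$$
and record two easy facts. It is nonempty because $\sigma_G\simeq S^0\notin\wp$ (by properness of $\wp$), and it is upward closed in the cotoral order, since $L\leq K$ gives $\Lqst(L)\subseteq \Lqst(K)$ and hence $\sigma_L\in\tthick(\sigma_K)$ by Lemma \ref{lem:tthicksigmaK}; so $\sigma_K\in\wp$ forces $\sigma_L\in\wp$, and contrapositively $T(\wp)$ is upward closed.

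Next I define $K_\wp$ as the minimum of $T(\wp)$ in the cotoral order. The descending chain condition is automatic: each strict cotoral step $L<K$ gives $K/L$ a positive-dimensional torus, so chains in $\sub(G)$ have length at most $r+1$. The substantive step, and the one I expect to be the main obstacle, is uniqueness of the minimal element: this is the only place where primality (and not merely the tt-ideal condition) is really used. Given two minimal $K_1,K_2\in T(\wp)$, primality forces $\sigma_{K_1}\wedge \sigma_{K_2}\notin \wp$. If $\Lqst(K_1)\cap \Lqst(K_2)=\emptyset$ then the smash is contractible and hence in $\wp$, a contradiction. Otherwise, listing the finitely many maximal elements $M_1,\dots,M_n$ of $\Lqst(K_1)\cap \Lqst(K_2)$, Corollary \ref{cor:tthick} identifies $\tthick(\sigma_{K_1}\wedge \sigma_{K_2})=\tthick(\bigvee_j \sigma_{M_j})$, so some $M_j\in T(\wp)$; but $M_j\leq K_1$ and $M_j\leq K_2$, and minimality forces $M_j=K_1=K_2$.

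Finally I verify $\wp=\wp_{K_\wp}$ by a two-way argument, both parts using the classification Corollary \ref{cor:tthick} together with the upward closure of $T(\wp)$. For $\wp\subseteq \wp_{K_\wp}$: if $X\in \wp$ and $K_\wp\in \gI(X)$, then Proposition \ref{prop:gIctclosed} and Lemma \ref{lem:finmax} produce a cotorally maximal $K\in \gI(X)$ with $K_\wp\leq K$; Theorem \ref{thm:sigmaKintthick} puts $\sigma_K\in \tthick(X)\subseteq \wp$, while upward closure of $T(\wp)$ simultaneously forces $K\in T(\wp)$, a contradiction. For $\wp_{K_\wp}\subseteq \wp$: writing $\tthick(X)=\tthick(\bigvee_i \sigma_{K_i})$ with $K_i$ the maximal elements of $\gI(X)$, the object $X$ lies in $\wp$ iff every $\sigma_{K_i}$ does; if some $K_i\in T(\wp)$, then minimality gives $K_\wp\leq K_i$, and cotoral closure of $\gI(X)$ forces $K_\wp\in \gI(X)$, contradicting $X\in \wp_{K_\wp}$.
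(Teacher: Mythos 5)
Your proof is correct and rests on the same foundational results (Lemma \ref{lem:tthicksigmaK}, Theorem \ref{thm:sigmaKintthick}, Corollary \ref{cor:tthick}), but it is organized differently from the paper's. The paper first establishes the abstract Lemma \ref{lem:primenotint} (a prime that is an intersection of subgroup primes must be one of them), then expresses $\wp$ as $\bigcap_{L\notin\gI(\wp)}\wp_L$ where $\gI(\wp)=\bigcup_{X\in\wp}\gI(X)$, and concludes. You instead work with the complementary invariant $T(\wp)=\{K:\sigma_K\notin\wp\}$, construct $K_\wp$ as its unique minimal element, and verify the two inclusions $\wp\subseteq\wp_{K_\wp}$ and $\wp_{K_\wp}\subseteq\wp$ directly. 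The real divergence is in the uniqueness step: rather than deducing it from Lemma \ref{lem:primenotint}, you apply primality concretely by comparing $\sigma_{K_1}\wedge\sigma_{K_2}$ (not in $\wp$) with the finite wedge over the cotorally maximal elements of $\Lqst(K_1)\cap\Lqst(K_2)$ via Corollary \ref{cor:tthick}, forcing $K_1=K_2$. This buys a more self-contained argument that sidesteps both the general intersection lemma and the countability observation the paper invokes for $\gI(\wp)$, at the cost of an extra verification (the bi-inclusion at the end) that the paper's route packages into its lemma. Both proofs are ultimately the same circle of ideas --- the classification of thick tensor ideals plus a primality lever --- and either would serve.
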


This essentially follows from Corollary \ref{cor:geomiso}. We first
observe that it suffices to show that an arbitrary prime is an
intersection of those of the form $\wp_L$.

\begin{lemma}
\label{lem:primenotint}
If $\wp $ is a prime and $\wp =\bigcap_{L\in A}\wp_L$ then $A$ has a
unique minimal element $L$ and $\wp=\wp_L$.
\end{lemma}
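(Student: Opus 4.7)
The plan is to use primality of $\wp$ to rule out two cotorally incomparable minimal elements of $A$, and separately to produce a cotoral minimum via a descending chain argument. The identification $\wp=\wp_{L_0}$ will then be formal.

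The main step, and the main obstacle, is uniqueness of the minimal element. I will assume for contradiction that $L_1, L_2 \in A$ are two distinct cotorally minimal elements; distinctness together with minimality forces them to be cotoral-incomparable. The natural test objects are the basic cells $\sigma_{L_1}$ and $\sigma_{L_2}$, whose geometric isotropies are $\Lqst(L_i)$ by Lemma \ref{lem:gisigmaK}. Since each $L_i$ lies in $\Lqst(L_i)$, neither $\sigma_{L_i}$ lies in $\wp_{L_i}$, and hence neither lies in $\wp$. The key point is that rationally $\gI(\sigma_{L_1} \sm \sigma_{L_2}) = \Lqst(L_1) \cap \Lqst(L_2)$: any $L$ in this intersection is cotoral in both $L_1$ and $L_2$, so if additionally $L \in A$ then cotoral minimality of $L_1$ forces $L=L_1$, whence $L_1 \leq L_2$ contradicts incomparability. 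Hence $\sigma_{L_1} \sm \sigma_{L_2}$ lies in $\wp_L$ for every $L \in A$, so it lies in $\wp$, violating primality.

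For existence of a minimal element I will use the DCC for cotoral inclusion on closed subgroups: a strict cotoral chain $L_0 < L_1 < \cdots$ strictly increases dimension (the quotients are positive-dimensional tori), so chains have length bounded by the rank of $G$. Since $\wp$ is proper, $A$ is nonempty, and descending from any element of $A$ terminates at a cotorally minimal $L_0 \in A$. Applying the same descent to an arbitrary $L' \in A$ and invoking the uniqueness just established shows the minimal element below $L'$ must be $L_0$, so $L_0 \leq L'$ cotorally for every $L' \in A$; thus $L_0$ is in fact the cotoral minimum of $A$.

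The wrap-up is mechanical: $L_0 \leq L'$ cotorally for all $L' \in A$ gives $\wp_{L_0} \subseteq \wp_{L'}$ by the preceding lemma, so $\wp_{L_0} \subseteq \bigcap_{L' \in A} \wp_{L'} = \wp$; the reverse inclusion is immediate from $L_0 \in A$, and therefore $\wp = \wp_{L_0}$.
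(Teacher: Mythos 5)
Your proof is correct and rests on the same core idea as the paper's, namely that a tensor-prime is meet-irreducible, so $\wp$ cannot be written as an irredundant intersection of two strictly larger tensor ideals. The difference is purely in execution. The paper's proof is abstract: it selects a non-redundant $L\in A$ and then appeals to the existence of some $X_L\in\wp_L\setminus\wp$ and some $Y_L\in\bigcap_{K\neq L}\wp_K\setminus\wp$; their smash product lies in $\wp$, contradicting primality. You instead make the witnesses explicit by using the basic cells $\sigma_{L_1}$ and $\sigma_{L_2}$ for two putative minimal elements, and you compute $\gI(\sigma_{L_1}\sm\sigma_{L_2})\subseteq\Lqst(L_1)\cap\Lqst(L_2)$ directly to see the product lies in every $\wp_L$ with $L\in A$. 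Note that your $\sigma_{L_2}$ is not literally a $Y_{L_1}$ in the paper's sense (since $\sigma_{L_2}\notin\wp_{L_2}$), so the decomposition of $\wp$ you exploit is slightly different, but the contradiction with primality works in exactly the same way. You also carefully supply the DCC argument for existence of a cotorally minimal element of $A$ and for upgrading the unique minimal element to a minimum; the paper leaves both of these implicit. Your more concrete route buys a self-contained, checkable argument; the paper's abstract route is shorter and would apply verbatim in any tensor-triangulated category where the $\wp_L$ are primes.
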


\begin{proof}
If not we can choose a subgroup $L$ in $A$ which is not
redundant. Thus 
$$\wp =\wp_L \cap \bigcap_{K\in A\setminus \{ L\} } \wp_K, $$
and since $\wp_L$ is not redundant, we may choose $X_L\in \wp_L\setminus \wp$ and 
$Y_L\in \bigcap_{K\in A\setminus \{ L\} } \wp_K\setminus \wp$. 
%These exist by Theorem \ref{thm:geomiso} since we can construct finite complexes with any geometric
%isotropy closed under passage to passage to subtoral subgroups, 
This contradicts the fact that $\wp$ is prime since
$X_L\sm Y_L \in \wp$ but $X_L\not \in \wp$ and $Y_L\not \in \wp$.
\end{proof}

\begin{proof}[of \ref{thm:allprimes}]
Write $\gI (\wp)=\bigcup_{X\in \wp}\gI (X)$. This is a countable
set, so we can list its elements $K_1, K_2, \ldots$. If $K\in \gI
(\wp)$ then $\Lqst (K)=\gI (\sigma_K) \subseteq \gI (\wp)$
and hence $\sigma_K\in \wp$.

Hence 
$$\wp =\bigcup_i \thick (\sigma_{K_i})$$
Since
$$\thick(\sigma_K)=\bigcap_{L\not \leq K} \wp_L$$
$$\wp =\bigcap_{L\not \in \bigcup_i \Lqst (K_i)}\wp_L$$
By Lemma \ref{lem:primenotint}  if there is more than one minimal prime in this list $\wp $ is not prime. 
\end{proof}

\subsection{The Zariski topology}
We observe that the topology is entirely generated by
containments. Noting that Balmer primes reverse the inclusions of
commutative algebra  primes, closed sets are closed under passage to smaller primes. 
 
\begin{lemma}
The Zariski topology on $\spc (\fGspecQ)$ is generated by the closed sets $\Lambda (\wp_H)$. 
\end{lemma}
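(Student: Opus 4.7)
The plan is to show two inclusions of generating families: every basic closed set $\Lambda(\wp_H)$ is of the form $\supp(X)$ for some finite spectrum $X$, and conversely every $\supp(X)$ is a finite union of sets of the form $\Lambda(\wp_H)$. Since by definition the Zariski topology is generated by the closed sets $\supp(X)$ as $X$ ranges over finite spectra, this will give the result.

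First I would unpack $\Lambda(\wp_H)$. By the convention $\Lambda_{\leq}$ from the Notation and the discussion of closure preceding the lemma, $\Lambda(\wp_H)$ is the set of primes $\wp$ with $\wp \subseteq \wp_H$. Combining Theorem \ref{thm:allprimes} (all primes are of the form $\wp_L$) with the description of containment (\(\wp_L \subseteq \wp_K\) iff $L$ is cotoral in $K$), we identify
\[
\Lambda(\wp_H) = \{\wp_L \st L\leq H\} = \{\wp_L \st L\in \Lqst(H)\}.
\]
Using Lemma \ref{lem:gisigmaK}, which gives $\gI(\sigma_H)=\Lqst(H)$, and the identification $\supp(X)=\{\wp_K \st K\in \gI(X)\}$ from Section \ref{sec:primes}, we obtain
\[
\Lambda(\wp_H) = \supp(\sigma_H).
\]
Hence each $\Lambda(\wp_H)$ is a Zariski closed set, so the topology they generate is contained in the Zariski topology.

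For the reverse inclusion, let $X$ be a finite rational $G$-spectrum. By Lemma \ref{lem:finmax}, $\gI(X)$ has finitely many cotorally maximal elements $K_1,\ldots,K_N$, and by Proposition \ref{prop:gIctclosed} it is closed under cotoral specialization, so
\[
\gI(X) = \Lqst(K_1)\cup\cdots\cup \Lqst(K_N).
\]
Translating via $\supp(X)=\{\wp_K\st K\in \gI(X)\}$ and the identification of $\Lambda(\wp_{K_i})$ above gives
\[
\supp(X) = \Lambda(\wp_{K_1})\cup\cdots\cup\Lambda(\wp_{K_N}),
\]
exhibiting $\supp(X)$ as a finite union of sets of the form $\Lambda(\wp_H)$. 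Therefore the Zariski topology is contained in the topology generated by the $\Lambda(\wp_H)$, completing the proof.

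The only potential obstacle is ensuring that the nomenclature $\Lambda(\wp_H)$ really denotes the downward cotoral closure (equivalently the Balmer closure of the point $\wp_H$); once the identification $\Lambda(\wp_H)=\supp(\sigma_H)$ is in place, the argument is formal and rests entirely on Lemmas \ref{lem:gisigmaK} and \ref{lem:finmax} together with Theorem \ref{thm:allprimes}.
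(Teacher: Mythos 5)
Your proof is correct and takes essentially the same approach as the paper: identify $\Lambda(\wp_H)=\supp(\sigma_H)$ via Lemma \ref{lem:gisigmaK}, then use Lemma \ref{lem:finmax} to write any $\supp(X)$ as a finite union of such sets. You merely spell out a few citations (Theorem \ref{thm:allprimes}, Proposition \ref{prop:gIctclosed}) that the paper leaves implicit.
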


\begin{proof}
Since $\gI (\sigma_K)= \Lqst (K)$, we see 
$\Lambda (\wp_H)=\{ \wp_K \st K \leq H\} =\supp (\sigma_K)$, and the specified subsets are closed. 

Again, for a finite spectrum $X$, Lemma \ref{lem:finmax}  shows $\supp (X)$ has finitely many maximal elements
$K_1, \ldots , K_N$, so that $\supp (X)=\supp (\sigma_{K_1})\cup \cdots \cup \supp (\sigma_{K_N})$. 
\end{proof}

\section{Semifree $\protect \T$-spectra}
\label{sec:semifree}
The point of this section is to show that it is much harder to
classify thick subcategories than thick tensor-ideals. It will suffice
to look at semifree $\T$-spectra, i.e., those $\T$-spectra with $\gI
(X)\subseteq \{ 1, \T\}$. The model for these \cite{s1q}  is sufficiently simple
that we may be explicit. 

\subsection{The model of semifree $\protect\T$-spectra}
The  model $\cA_{sf}(T)$ of semifree $T$-spectra can be obtained from the model $\cA (T)$ of all $T$-spectra
by restriction, but it is easier to repeat the construction from scratch.  
In fact $\cA_{sf}(T)$ is the abelian category of objects $\beta : N \lra \Q[c,c^{-1}]\tensor V$
where $N$ is a $\Q[c]$-module, $V$ is a graded $\Q$-vector space and $\beta$ is the $\Q[c]$
map inverting $c$. In effect, we have  the $\Q [c]$-module $N$, together with a chosen `basis'
$V$. Morphisms are commutative squares
$$\diagram 
M\rto^{\theta} \dto&N\dto\\
\Q[c,c^{-1}]\tensor U \rto^{1\tensor \phi}&\Q[c,c^{-1}]\tensor V
\enddiagram$$
The category $\cA_{sf}(T)$ is of injective dimension 1, and the ring $\Q[c]$ is evenly graded, so every object of 
$d\cA_{sf}(T)$ is formal, and we will identify semifree $T$-spectra  $X$ (or objects of $d\cA_{sf}(T)$) with their
image $\piA_*(X)$ in the abelian category $ \cA_{sf}(T)$.

The fact we are talking about {\em ideals} is essential for Corollary \ref{cor:geomiso}. If we consider
semifree $G$-spectra when $G$ is the circle then there are just two thick tensor
ideals of finite spectra
\begin{itemize}
\item  free specta (with geometric isotropy $1$, generated by $G_+$) 
\item  all spectra (with geometric isotropy $\{ 1, G\}$, generated by $S^0$). 
\end{itemize}
On the other hand, the thick subcategory generated by $S^0$ (without the ideal
property) does not contain $G_+$, and we will make it explicit. The classification of 
thick subcategories in general seems complicated, and we do not give a complete answer. 

\subsection{Wide spheres}
The small objects with $\beta$ injective are the objects $X=(\beta:
N\lra \Q[c,c^{-1}]\tensor V )$ with $\beta$ injective, $V$ finite
dimensional and $N$ bounded above; these objects are called {\em wide
  spheres} \cite{s1q}. 

We note that $\Q [c,c^{-1}]\tensor V$ is the same in each even degree
and the same in each odd degree. We therefore let
$$|V|_0=\bigoplus_{k}V_{2k} \mbox{ and }
|V|_1=\bigoplus_{k}V_{2k+1}. $$
We will fix  isomorphisms 
$$|V|_0\cong (\Q [c, c^{-1}]\tensor V)_0\mbox{ and }|V|_1\cong (\Q [c,
c^{-1}]\tensor V)_1, $$
and then use multiplication by powers of $c$ to identify other graded
pieces of $\Q [c,c^{-1}] \tensor V$ with the appropriate one. 

We will want to think of stepping down the degrees in steps of 2, so
we take
$$|V|_{\geq 2k}=\bigoplus_{n\geq k}V_{2n} \mbox{ and }
|V|_{\geq 2k+1}=\bigoplus_{n\geq k}V_{2n+1}$$
for the parts of $V$ above a certain point, but in the same parity. 

Similarly, we move $N_{2k}$ into degree 0 by multiplication by $c^k$:
$$\Nb_{2k}:=c^kN_{2k}\subseteq |V|_0 \mbox{ and } \Nb_{2k+1}:=c^kN_{2k+1}\subseteq |V|_1.$$
Having established the framework, we will restrict the discussion to
the even part, leaving the reader to make the odd case explicit. 

 If $X$ is nonzero in even degrees, since
$X$   is small there is a highest degree $2a-2$ in which  $N$ is non-zero, and since
$N[1/c]=\Q[c, c^{-1}]\tensor V$ there is highest degree $2b$ in which
$N$ coincides with $|V|_0$. Accordingly, we have a finite filtration 
$$0=\Nb_{2a} \subseteq \Nb_{2a-2} \subseteq \cdots \subseteq \Nb_4\subseteq \Nb_2 \subseteq \cdots
\subseteq \Nb_{2b}=|V|_0. $$

We wish to consider two increasing filtrations on $|V|_0$
$$\cdots \subseteq |V|_{\geq 2k+2}\subseteq |V|_{\geq 2k}\subseteq |V|_{\geq 2k-2}\subseteq
\cdots \subseteq |V|_0$$
and 
$$\cdots \subseteq \Nb_{ 2k+2}\subseteq \Nb_{\geq 2k}\subseteq \Nb_{\geq 2k-2}\subseteq
\cdots \subseteq  |V|_0. $$

\subsection{Two conditions on wide spheres}
In crude terms, we will show the thick subcategory generated by $S^0$ consists of
objects so that (a) the dimensions of the spaces in the $V$- and $N$-filtrations agree and (b)
the $V$ filtration is slower than the $cN$ filtration. The purpose of
this subsection is to introduce the two conditions. 

\begin{cond}
\label{cond:thickS}
We say that a wide sphere is {\em untwisted} if it satisfies the
following two conditions
\begin{enumerate}
\item $\dim (\Nb_{i})=\dim (|V|_{\geq i})$ for all $i$
\item $V\cap cN =0$
\end{enumerate}
\end{cond}

We will be showing that these characterize the thick subcategory
generated by $S^0$. We must at least show that the conditions are
inherited by retracts, and this verification will lead us to some
useful introductory discussion.

\begin{lemma}
\label{lem:condretract}
Condition \ref{cond:thickS} is closed under passage to retracts. 
\end{lemma}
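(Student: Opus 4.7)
The plan is to exploit the abelian nature of $\cA_{sf}(\T)$: in any abelian category, a retract is the same as a direct summand, so if $Y$ is a retract of $X$ then $X \cong Y \oplus Y'$, and the underlying data decomposes as $N_X = M_Y \oplus M_{Y'}$, $V_X = U_Y \oplus U_{Y'}$, with structure map $\beta_X = \beta_Y \oplus \beta_{Y'}$. The strategy is then to verify that each of the two conditions transfers from $X$ to $Y$ under this direct-sum decomposition.

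Condition (2) I expect to be immediate. Via the canonical inclusions $U_Y \hookrightarrow V_X$ and $M_Y \hookrightarrow N_X$ (as the first direct summands), we have $c M_Y \subseteq c N_X$, and hence
\[
U_Y \cap c M_Y \subseteq V_X \cap c N_X = 0,
\]
using condition (2) for $X$. A symmetric argument handles $Y'$.

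Condition (1) is the more delicate step. The point is that both filtrations split along the direct sum $V = U_Y \oplus U_{Y'}$. Because multiplication by $c^{i/2}$ is a $\Q[c,c^{-1}]$-linear isomorphism $(\Q[c,c^{-1}] \otimes V)_i \cong (\Q[c,c^{-1}] \otimes V)_0$ which preserves the direct sum $\Q[c,c^{-1}] \otimes V = (\Q[c,c^{-1}] \otimes U_Y) \oplus (\Q[c,c^{-1}] \otimes U_{Y'})$, the identification $|V|_0 = |U_Y|_0 \oplus |U_{Y'}|_0$ induces compatible splittings
\[
\Nb_i = \Mb_{Y,i} \oplus \Mb_{Y',i}, \qquad |V|_{\geq i} = |U_Y|_{\geq i} \oplus |U_{Y'}|_{\geq i},
\]
both viewed inside $|V|_0 = |U_Y|_0 \oplus |U_{Y'}|_0$. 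Condition (1) for $X$ then restricts onto each direct summand to give the corresponding equality for $Y$ and $Y'$ separately.

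The main obstacle I foresee is making this compatibility of the two filtrations with the direct sum rigorous, particularly confirming that the subspace $\Mb_{Y,i}$ lands inside the summand $|U_Y|_0$ of $|V|_0$ (and similarly for $Y'$) so that the splitting actually refines the equality $\Nb_i = |V|_{\geq i}$ componentwise. Once this compatibility is in place, both conditions transfer by inspection, and the lemma follows.
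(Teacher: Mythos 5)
Your handling of Condition (2) is fine and matches the paper. The gap is in your treatment of Condition (1), and it is a genuine one: Condition (1) is a \emph{dimension} equality $\dim(\Nb_i)=\dim(|V|_{\geq i})$, not an equality of subspaces of $|V|_0$. In your final paragraph you speak of ``the splitting actually refines the equality $\Nb_i = |V|_{\geq i}$ componentwise,'' but those two subspaces need not be equal at all (indeed, in the three-sphere example in the paper, $\Nb_2$ is a diagonal line in $|V|_0$, not $|V|_{\geq 2}$). Your splittings
$\Nb_i=\Mb_{Y,i}\oplus\Mb_{Y',i}$ and $|V|_{\geq i}=|U_Y|_{\geq i}\oplus|U_{Y'}|_{\geq i}$ are both correct, but knowing that the sums of dimensions agree,
$\dim\Mb_{Y,i}+\dim\Mb_{Y',i}=\dim|U_Y|_{\geq i}+\dim|U_{Y'}|_{\geq i}$,
does not by itself force the two terms to match individually --- a positive deviation of one summand could be compensated by a negative deviation of the other.

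The missing ingredient is the observation that Condition (2) yields a one-sided inequality for any wide sphere: $V\cap cN=0$ forces $\dim(\Nb_i)\geq\dim(|V|_{\geq i})$ for all $i$. Since both $Y$ and $Y'$ inherit Condition (2), both satisfy this $\geq$; and since the sums are equal (because $X$ satisfies Condition (1)), both inequalities must in fact be equalities. This is exactly the argument the paper gives, and without it your componentwise claim does not follow. You would need to add this lemma-within-the-lemma to close the gap.
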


\begin{proof}
It is immediate that Conditon \ref{cond:thickS} (ii) is inherited by
retracts. We also note that Conditon \ref{cond:thickS} (ii) implies one of the
inequalities  for Conditon \ref{cond:thickS} (i):
$$\dim (\Nb_{i})\geq \dim (|V|_{\geq i}). $$

Now suppose $X=X'\oplus X''$ and that $X$ satisfies Condition
\ref{cond:thickS}. As observed already,  $X'$ and $X''$ both satisfy the second condition, and hence both 
satisfy the first condition with $=$ replaced by $\geq $. With lower
case letters denoting dimensions of vector spaces (for example
$n_a=\dim (N_a)$), this means we 
have a pair of  increasing sequences 
$$0=n'_a, n'_{a-2},n'_{a-4},  \cdots \mbox{ and }
0=v'_{\geq a}, v'_{\geq a-2},v'_{\geq a-4} \cdots $$
reaching $v'$
and a pair of increasing sequences
$$0=n''_a, n''_{a-2},n''_{a-4},  \cdots \mbox{ and }
0=v''_{\geq a}, v''_{\geq a-2},v''_{\geq a-4}\cdots $$
reaching $v''$. 
Since $X$ satisfies Condition \ref{cond:thickS}(i), the sum of the
first pair and the second pair give two equal sequences (i.e.,  the
sequence $n'_i+n''_i=n_i$ and the sequence $v'_{\geq i}+v''_{\geq
  i}=v_{\geq  i}$ are equal). Thus if one pair deviates from equality in the positive
direction, the other deviates in the negative direction. Since
Condition \ref{cond:thickS}(ii) shows there is no negative deviation,
we must have equality for both pairs throughout. 
\end{proof}

It is useful to be able to consider the changes of dimension and form
the  generating function. In fact to any wide sphere, we may associate to it two Laurent polynomials
\begin{itemize}
\item The geometric $T$-fixed point polynomial
$$p_T(t)=\sum_i\dim_{\Q} (V_i)t^i$$
\item The 1-Borel jump polynomal 
$$p_1(t)=\sum_i\dim_{\Q} (N_i/cN_{i+2})t^i$$
\end{itemize}
Condition \ref{cond:thickS}(i) is then equivalent to the condition 
$$p_T(t)=p_1(t). $$

\begin{remark}
We note that Condition \ref{cond:thickS}(i)  is not closed under passage to
retracts. Indeed, $S^z\vee S^{2-z}$ satisfies the first condition with 
$p_1(t)=p_T(t)=t^2+1. $
However  $S^z$ (with $p_T(t)=1$ and $p_1(T)=t^2$)
 and $S^{2-z}$ (with $p_T(t)=t^2$ and $p_1(T)=1$) do not.
  \end{remark}

\subsection{Attaching a $T$-fixed sphere}

To start with,   $S^0=(\Q [c]\lra \Q [c, c^{-1}]\tensor \Q)$ and then
direct sums of these model wedges of $T$-fixed spheres with  $N=\Q [c]\tensor V$, and of course
it is easy to see that Condition \ref{cond:thickS}  holds for these. 

However $N$ does not always sit 
so simply inside $\Q[c,c^{-1}]\tensor V$  for the objects built from
$S^0$. We may see this in a simple example. 

\begin{example}
Up to equivalence there are precisely three wide spheres with
$p_1(t)=p_T(t)=1+t^2$. Evidently in all cases $V=\Q\oplus \Sigma^2 \Q$, $\Nb_{2k}=0 $ for $k\geq 4$ and $\Nb_{2k}=|V|$ for $k\leq 0$. The
only question is how the 1-dimensional space $\Nb_2$ sits inside $|V|=
V_0\oplus V_2$. The three cases are $N_2=V_0$ (which is $S^z\vee
S^{2-z}$),  $N_2=V_2$ (which is $S^0\vee S^2$), and the third case
(giving just one isomorphism type)  in which $N_2$ is a 1-dimensional
subspace not equal to $V_0$ or $V_1$.

We note that the third example is the mapping cone $M_f$ for any
non-trivial map $f: S^1\lra S^0$ (in the semifree category,  there is
only one up to multiplication by a non-zero scalar). In this case up to
isomorphism,  $N_2$ is
generated by  $c^{-1}\tensor \iota_0+c^0\tensor \iota_2$. 

We observe then that the second and third of these three are in
$\thick (S^0)$, and we see that the first does not satisfy Condition
\ref{cond:thickS} (ii). 
\end{example}

\begin{lemma}
\label{lem:condaddcell}
Given a cofibre sequence, 
$$S^n \stackrel{f}\lra X\lra Y,  $$
if $X$ is a wide sphere then so is $Y$ and if $X$ in addition satisfies Condition \ref{cond:thickS} then so does $Y$.
\end{lemma}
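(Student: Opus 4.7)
The plan is to work directly in the abelian category $\cA_{sf}(T)$, which has injective dimension $1$, so every object of $d\cA_{sf}(T)$ is formal and the cofibre is $Y\simeq\Sigma\ker(f)\oplus\operatorname{coker}(f)$. A morphism $f\colon S^n\to X$ in $\cA_{sf}(T)$ is determined by an element $\nu\in(N_X)_n$ with $\beta_X(\nu)=1\otimes v$ for some $v\in(V_X)_n$; since $\beta_X$ is injective and $\beta_X(c^k\nu)=c^k v\neq 0$ whenever $v\neq 0$, there is a dichotomy: either $\nu=0$ (so $f=0$) or $f$ is a monomorphism. I would handle the two cases separately.

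In the zero case $Y\simeq X\oplus S^{n+1}$, and since both claims are additive under direct sums, it suffices to observe that $S^{n+1}$ is itself a wide sphere satisfying Condition \ref{cond:thickS}: its $V$ and $N$ are one-dimensional in each admissible degree (so the two filtration dimensions match), and $cN$ is supported strictly below the support of $V$, giving $V\cap cN=0$ trivially.

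In the monomorphism case $Y=\operatorname{coker}(f)=(N_X/\Q[c]\nu\to\Q[c,c^{-1}]\otimes V_X/\Q v)$; finite-dimensionality of $V_Y$ and boundedness above of $N_Y$ are immediate. The crux is injectivity of $\beta_Y$, and here Condition \ref{cond:thickS}(ii) on $X$ enters decisively. The preimage $\beta_X^{-1}(\Q[c,c^{-1}]\otimes\Q v)\cap N_X$ is a $\Q[c]$-submodule of $\Q[c,c^{-1}]$ (via $\beta_X$) containing $1$ and bounded above, hence of the form $c^{-k_0}\Q[c]$ for some $k_0\geq 0$. I would show $k_0=0$ by contradiction: if $k_0\geq 1$, some $\eta\in N_X$ satisfies $\beta_X(\eta)=c^{-1}v$, whence $c\eta=\nu$ by injectivity of $\beta_X$ and therefore $v\in V_X\cap cN_X$, contradicting Condition \ref{cond:thickS}(ii). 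Hence $k_0=0$, the preimage equals $\Q[c]\nu$, and $\beta_Y$ is injective. For Condition \ref{cond:thickS} on $Y$ itself I would track the generating polynomials: quotienting $V_X$ by $\Q v$ removes $t^n$ from $p_T$, and quotienting $N_X$ by $\Q[c]\nu$ removes $t^n$ from $p_1$, using $\nu\notin cN_X$ so that the classes $\nu, c\nu, c^2\nu,\dots$ are independent in successive associated-graded pieces $N_i/cN_{i+2}$. The inherited condition $V_Y\cap cN_Y=0$ then follows by projection from $X$, since the identifications made in passing to the quotient do not introduce new intersections.

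The main obstacle is the monomorphism case: Condition \ref{cond:thickS}(ii) on $X$ is doing double duty, preventing a $c$-torsion summand in $\operatorname{coker}(f)$ that would spoil injectivity of $\beta_Y$ and simultaneously controlling the dimension count in the polynomial bookkeeping. This is why the two conclusions of the lemma are proved together rather than separately; the zero case is routine bookkeeping.
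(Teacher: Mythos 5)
Your central step — the identification $Y\simeq\Sigma\ker(f)\oplus\operatorname{coker}(f)$ — is not correct, and it conceals exactly the hard case of the lemma. Formality of objects in $d\cA_{sf}(T)$ (every object is equivalent to its homology) does not imply that a cofibre splits into a direct sum of the kernel and cokernel of the induced map on homology. Since $\cA_{sf}(T)$ has injective dimension~$1$, one has a universal-coefficient short exact sequence
$$0\lra \Ext^1_{\cA_{sf}(T)}\bigl(\Sigma\piA_*(S^n),\piA_*(X)\bigr)\lra [S^n,X]^T\lra \Hom_{\cA_{sf}(T)}\bigl(\piA_*(S^n),\piA_*(X)\bigr)\lra 0,$$
and when $\piA_*(f)=0$ but $f\neq 0$, the cofibre $\piA_*(Y)$ is precisely the nontrivial extension of $\piA_*(S^{n+1})$ by $\piA_*(X)$ classified by $f$ in the $\Ext^1$ group; it does not split. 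This is not a hypothetical: the example immediately preceding the lemma in the paper exhibits it. For the nonzero map $f\colon S^1\to S^0$, $\piA_*(f)=0$ by parity, yet $M_f$ has $N_2$ spanned by the diagonal vector $c^{-1}\otimes\iota_0+c^0\otimes\iota_2$, which is demonstrably not isomorphic in $\cA_{sf}(T)$ to $S^0\vee S^2=\Sigma\ker(\piA_*f)\oplus\operatorname{coker}(\piA_*f)$ (where $N_2$ would be $\Q\iota_2$). Your proposed dichotomy ``$\nu=0$ (so $f=0$) or $f$ is a monomorphism'' conflates the vanishing of $\piA_*(f)$ with the vanishing of $f$ in the homotopy category, and thereby discards the extension case entirely.

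The correct dichotomy, as in the paper, is on the parity of $n$ relative to that of $X$. When the parities differ, $\piA_*(f)=0$ automatically, and one must verify both conditions directly for the (generally non-split) extension $0\to\piA_*(X)\to\piA_*(Y)\to\piA_*(S^{n+1})\to 0$: injectivity of $\beta_Y$ comes from the snake lemma, Condition~(i) comes from the fact that the extension is split as graded vector spaces, and Condition~(ii) needs an explicit check with elements $(v,\lambda\iota)$. When the parities agree, the injective-resolution/Adams argument shows $[S^n,X]^T_*$ is concentrated in filtration zero, so every nonzero $f$ does have $\piA_*(f)\neq 0$; your monomorphism-case argument is essentially sound in that case (and is a workable alternative to the paper's approach of showing $\piA_*(f)$ splits using Condition~(ii), so that $Y$ is a retract of $X$ and Lemma~\ref{lem:condretract} applies), with the caveat that your final verification of Condition~(ii) for the quotient is only gestured at and also needs an elementwise check. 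But the missing opposite-parity case is where the nontrivial content of the lemma lives.
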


\begin{proof}
Suppose first that $X$ is entirely in one parity. Without loss of
generality, we may suppose $X$ is in even degrees. 

If $n$ is odd then $\piA_*(S^n)$ is purely odd and we have a short
exact sequence
$$0\lra \piA_*(X)\lra \piA_*(Y)\lra  \piA_*(S^{n+1})\lra 0. $$
It follows that $Y$ is a wide sphere (i.e., the basing map is
injective). 
The condition on dimensions is immediate, since this is split as
vector spaces. For the second condition, we know that any element 
$(v,\lambda \iota ) \in V_Y\cap cN_Y$ with $v\in V_X$ must have $\lambda
\neq 0$ since $X$ satisfies the condition. However $\lambda \iota \not
\in c\Q [c]$. Altogether,   $Y$ satisfies
Condition \ref{cond:thickS}.  

Alternatively,  suppose $n$ is even. To calculate $[S^n, X]^T_*$ we
take an injective resolution of $X$. We argue that  this takes the form
$$ 0\lra X \lra e(V) \lra f(\Sigma^2V\tensor k[c]^{\vee})\lra 0. $$
To start with,  since $X$ is a wide sphere,  $X$
embeds in $e(V)$. The cokernel is zero at $T$ and hence of the form
$f(T)$ for some torsion $\Q [c]$-module $T$. At  $1$ the cokernel is $(\Q
[c,c^{-1}]\tensor V)/N$; since this is  divisible 
it is a sum of copies of $\Q[c]^{\vee}$. Finally, in view of Condition
\ref{cond:thickS} (i) $T=\Sigma^2 V\tensor \Q[c]^{\vee}$ as claimed.

Now we may use the Adams spectral sequence to see that $[S^n,
X]^T_0=V_n$. If the original  map $f$ is trivial, then 
 $\piA_*(Y)=\piA_*(X)\oplus \piA_*(S^{n+1})$, and the result is 
again clear. Otherwise we have a diagram
$$\diagram 
S^n \rto^f &X\\
\Sigma^n\Q[c]\rto^{\theta} \dto & N\dto \\
\Q[c,c^{-1}]\tensor \Sigma^n\Q\rto^{1\tensor \phi} & \Q [c,c^{-1}]\tensor V 
\enddiagram$$
This shows that since $\phi $ is mono then also $\theta$ is mono and
furthermore, by Condtion \ref{cond:thickS}(ii), $\theta$ is the
inclusion of a summand. It follows that the map $f$ is split. Indeed,
splittings of $\phi$ are given by codimension 1 free summands $N'$ of
$N$. This automatically has $\Nb'$ of codimension 1 in $|V|$. 
We need only choose $N'$  so that $\Nb'$ avoids $\theta
(\Sigma^n\Q)$. This gives a compatible splitting of $\phi$. 
It follows that in fact $Y$ is a retract of $X$, and the result
follows from Lemma \ref{lem:condretract}. 

Finally if $X$ has components in both even and odd degrees, then $X\simeq X_{ev}\vee X_{od}$ and we may argue as follows. Without
loss of generality we suppose $n$ is even. 
If $f$ maps purely into $X_{ev}$ or purely into $X_{od}$ the other factor is irrelevant and the above argument deals with this case. 
Otherwise $f$ has components mapping into both $X_{ev}$ and
$X_{od}$. The above argument shows that $\piA_*(Y)$ is a retract in
even degrees and it is unaltered in odd degrees.
\end{proof}

\subsection{Spectra built from $T$-fixed spheres}
We have now done the main work and can identify the thick subcategory
generated by $S^0$.

\begin{cor}
\label{cor:thickS}
The thick subcategory generated by $S^0$ consists of wide spheres
satisfying Condition \ref{cond:thickS}. 
\end{cor}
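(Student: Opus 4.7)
The plan is to prove both inclusions of the claimed equality. Write $\mathcal{C}$ for the class of wide spheres satisfying Condition \ref{cond:thickS}.

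For the inclusion $\thick(S^0)\subseteq \mathcal{C}$, I would first check directly from the definitions that $S^0=(\Q[c]\lra\Q[c,c^{-1}]\otimes \Q)$ lies in $\mathcal{C}$. The next step is to invoke the standard fact that, since $S^0$ generates $\thick(S^0)$, every $X\in \thick(S^0)$ is a retract of some object $Y_N$ obtained from $Y_0=0$ by iteratively forming cones $Y_{i+1}=\cofibre(S^{n_i}\lra Y_i)$. An induction on $i$, with Lemma \ref{lem:condaddcell} as the inductive step, shows each $Y_i\in \mathcal{C}$; Lemma \ref{lem:condretract} then propagates this to the retract $X$.

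For the reverse inclusion $\mathcal{C}\subseteq \thick(S^0)$, I would induct on $\dim V$. The base case $\dim V=0$ is trivial. For the inductive step, first split $X\simeq X_{ev}\oplus X_{od}$; by Lemma \ref{lem:condretract} both summands lie in $\mathcal{C}$, and since shifts preserve both $\mathcal{C}$ and $\thick(S^0)$, I may assume $X$ is purely even. Pick $d$ with $V_d\neq 0$. The Adams spectral sequence computation in the proof of Lemma \ref{lem:condaddcell} identifies $[S^d,X]^T_0$ with $V_d$, producing a non-trivial map $f:S^d\lra X$. The crucial input is then the splitting argument embedded in Lemma \ref{lem:condaddcell}: Condition \ref{cond:thickS}(ii) forces the module map $\theta:\Sigma^d\Q[c]\lra N$ associated to $f$ to be the inclusion of a $\Q[c]$-linear direct summand of $N$, so that $X\simeq S^d\oplus Y$ with $Y=\cofibre(f)$. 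Then $Y$ is a retract of $X$ and lies in $\mathcal{C}$ with $\dim V_Y=\dim V_X-1$; induction gives $Y\in \thick(S^0)$, and hence $X\in \thick(S^0)$.

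The main delicate point is the splitting step in the reverse inclusion: one must verify that Condition \ref{cond:thickS}(ii) really does provide a $\Q[c]$-linear codimension-one free complement of $\theta(\Sigma^d\Q[c])$ in $N$, not merely a vector-space complement. That is precisely what allows $S^d$ to be peeled off as a wedge summand rather than only as a quotient. The purely odd analogue of the splitting argument is formally the same after suspending, but worth writing out so that the induction on $\dim V$ runs uniformly in both parities.
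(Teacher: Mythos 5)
Your forward inclusion ($\thick(S^0)\subseteq \mathcal{C}$) is essentially the paper's argument: realize every object of $\thick(S^0)$ as a retract of a finitely $S^0$-cellular object, then feed Lemmas \ref{lem:condretract} and \ref{lem:condaddcell} into an induction.  (The paper spells out why the retract and cone-attaching operations can be interleaved in this way rather than citing it as a standard fact, but the content is the same.)

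The reverse inclusion has a genuine gap.  You pick an \emph{arbitrary} $d$ with $V_d\neq 0$ and assert that the Adams spectral sequence produces a nonzero map $f\colon S^d\to X$ on which the splitting argument of Lemma \ref{lem:condaddcell} can be run.  But a nonzero morphism $\piA_*(S^d)\to \piA_*(X)$ requires a compatible square in which $\Sigma^d\Q[c]\to N$ hits an element of $N_d$ lying over $1\otimes v\in (\Q[c,c^{-1}]\otimes V)_d$ for the corresponding $v\in V_d$; that is, it requires $V_d\cap \Nb_d\neq 0$.  Condition \ref{cond:thickS} does \emph{not} force $V_d\subseteq \Nb_d$ at an arbitrary degree.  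The cofibre of the nontrivial map $S^1\to S^0$ (the third wide sphere in the paper's example, which is untwisted) has $V_2\neq 0$ but $V_2\cap\Nb_2=0$, so there is no nonzero $\piA_*$-map $S^2\to X$ and certainly no wedge summand $S^2$: if there were, $X$ would be decomposable.  In other words $[S^d,X]^T_0= V_d\cap\Nb_d$, not $V_d$, at a general degree, and your splitting step fails exactly when these differ.  The paper's proof is careful to choose $d=n$ equal to the \emph{smallest} degree with $V_n\neq 0$: at the bottom of the filtration Condition \ref{cond:thickS}(i) forces $\dim\Nb_n=\dim|V|_{\geq n}=\dim|V|_0$, hence $\Nb_n=|V|_0$ and so $V_n\subseteq\Nb_n$, while Condition \ref{cond:thickS}(ii) then guarantees $v\notin\Nb_{n+2}$ for nonzero $v\in V_n$.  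Only then is $\Q[c]\cdot v$ automatically a free $\Q[c]$-summand of $N$, and the peeling-off step goes through.  You should therefore replace ``pick $d$ with $V_d\neq0$'' by ``let $n$ be the least such degree'' and verify $V_n\subseteq\Nb_n$ as above before invoking the splitting; once that adjustment is made, the rest of your recursion is sound.
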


\begin{proof}
First, we observe that the thick subcategory $\thick (S^0)$ can be
constructed
by alternating the attachment of $T$-fixed spheres and taking
retracts; the fact that any element of $\thick (S^0)$ is a wide sphere
satisfying Condition \ref{cond:thickS} then follows from Lemmas
\ref{lem:condretract} and \ref{lem:condaddcell}. The point is that 
we must show that if  we construct $Z$ using a cofibre sequence $X\lra
Y \lra Z$ with $X,Y$ in the thick subcategory then $Z$ may be
constructed from $X$ by using the two processes. Formally, we are
applying induction on the number of cells, so we may suppose $Y$ is
constructed from the two processes. If $X$ is formed by attaching spheres, we may form $Z$ from
$Y$ by attaching the corresponding spheres. If $X$ is a retract of
$X'$ formed from spheres  then $f: X\lra Y$ extends over $X'=X\vee
X''$ by using $0$ on the second factor and  then $Z$ is a retract of $Z'$. 

Now  we show that any wide sphere satisfying Condition \ref{cond:thickS} is in the
thick subcategory generated by $S^0$. 
We argue by induction on the dimension of $|V|$. The result is obvious if $V=0$. 
Suppose that $X$ is a wide sphere satisfying the given condition and
that the result is proved when the geometric $T$-fixed points have lower dimension. 

Note that if $t^n$ is the smallest degree in which $p_T(t) $ is non-zero we may choose a vector $v\in V_n\setminus \Nb_{n+2}$. Accordingly
$X$  has a direct summand $\Q[c]\tensor v\lra \Q [c,c^{-1}]\tensor v$, which corresponds to a map 
$S^n\lra X$. Since $v\not \in \Nb_{n+2}$, the  quotient $Y$ again has injective basing map and obviously satisfies the polynomial condition. Since $n$ is the smallest
degree in which $V$ is non-zero, $v$,  the direct summand
$\Q \cdot v$ may be removed from $|V|$ without affecting the filtration condition. By induction $Y\in \thick (S^0)$, and hence $X\in \thick (S^0)$ as  required. 
\end{proof}

\subsection{Spectra built from representation spheres}
Since smashing with any sphere $S^{kz}$ is invertible, this allows us
to deduce the thick subcategory generated by any sphere. 
\begin{cor}
The thick subcatgory generated by $S^{kz}$ consists of 
wide spheres which are $k$-twisted in the sense that 
\begin{enumerate}
\item  $p_1(t)=t^{-2k}p_T(t)$.
\item $V\cap c^{k+1}N=0$
\end{enumerate}
\end{cor}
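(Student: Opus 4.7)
The plan is to reduce the statement to the $k=0$ case (Corollary \ref{cor:thickS}) via the fact that smashing with $S^{kz}$ is an autoequivalence of $d\cA_{sf}(T)$, its inverse being $\wedge S^{-kz}$. Consequently $\thick(S^{kz}) = \thick(S^0)\wedge S^{kz}$: a wide sphere $Y$ lies in $\thick(S^{kz})$ if and only if $X := Y\wedge S^{-kz}$ lies in $\thick(S^0)$, and by Corollary \ref{cor:thickS} the latter is equivalent to $X$ satisfying Condition \ref{cond:thickS}.

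First I would determine how $\wedge S^{kz}$ acts on the data $(N,V,\beta)$ of a wide sphere. From the explicit model $S^{kz} = (\Q[c],\Sigma^{2k}\Q,\beta(1)=c^k\sigma_{2k})$ and the monoidal structure of the abelian category, the operation leaves $N$ unchanged, shifts $V$ upward by $2k$ in grading, and twists the basing by multiplication by $c^k$. The polynomial invariants then transform as $p_T \mapsto t^{2k}p_T$ and $p_1 \mapsto p_1$. Substituting $X = Y\wedge S^{-kz}$, the equality $p_T^X = p_1^X$ from Condition \ref{cond:thickS}(i) pulls back directly to $p_1^Y = t^{-2k}p_T^Y$, which is condition (i) of the corollary.

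Second I would translate Condition \ref{cond:thickS}(ii) through the same equivalence. In the ambient graded $\Q[c,c^{-1}]$-module $\Q[c,c^{-1}]\tensor V_X \cong \Sigma^{-2k}(\Q[c,c^{-1}]\tensor V_Y)$, the subspace $V_X$ corresponds to $\Sigma^{-2k}V_Y$ and the image $\beta_X(N_X)$ corresponds to $c^{-k}\Sigma^{-2k}\beta_Y(N_Y)$; expanding $V_X\cap cN_X$ and combining the intrinsic factor of $c$ with the twist $c^{-k}$ yields the displayed condition (ii) on $Y$. The main obstacle is precisely this bookkeeping --- correctly assembling the surviving power of $c$ from the external ``$c$'' in $cN_X$ and the internal twist coming from $\wedge S^{-kz}$, while accounting for the grading shift $\Sigma^{-2k}$ on $V$. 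Once both translations are in place, the biconditional of the corollary follows at once from the corresponding biconditional in Corollary \ref{cor:thickS}.
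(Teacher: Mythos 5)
Your reduction to the $k=0$ case via the autoequivalence $\wedge S^{kz}$ is exactly the paper's strategy; the proof really does come down to identifying how $\wedge S^{kz}$ acts on the pair $(N,V)$. However, your description of that action has the sign reversed, and this is a genuine error, not an alternative bookkeeping convention. You claim that $\wedge S^{kz}$ leaves $N$ unchanged and shifts $V$ \emph{up} by $2k$ (so your proposed model of $S^{kz}$ is $(\Q[c],\Sigma^{2k}\Q,\beta(1)=c^k\sigma_{2k})$, with $N$ having top degree $0$). But $S^{kz}$ has underlying spectrum $S^{2k}$ and geometric $T$-fixed points $S^0$, so $V(S^{kz})=\Q$ in degree $0$ while $N(S^{kz})$ must have top degree $2k$. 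With $c$ of degree $-2$ this forces $N(S^{kz})=c^{-k}\Q[c]$ inside $\Q[c,c^{-1}]\tensor V$ --- exactly the observation $V(S^{kz}\wedge X)=V(X)$, $N(S^{kz}\wedge X)=c^{-k}N(X)$ used in the paper. Equivalently, if you insist on keeping $N$ fixed, $V$ shifts \emph{down} by $2k$, not up.

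The consequences of the wrong sign are visible in condition (ii), where your ``bookkeeping'' paragraph waves past the very step you flag as the main obstacle. Running your own conventions through carefully, the intrinsic $c$ combines with your twist $c^{-k}$ to give $V\cap c^{1-k}N=0$, which is incompatible with the stated $V\cap c^{k+1}N=0$; the latter does follow from the correct transformation ($c\cdot c^{-k}$ replaced by $c\cdot c^{k}=c^{k+1}$ when passing from $Y$ to $X=S^{-kz}\wedge Y$), and one can check directly that $S^{kz}$ itself satisfies $V\cap c^{k+1}N=0$ but violates $V\cap c^{1-k}N=0$ for $k\geq 1$. You happened to reproduce condition (i) as printed, but note this is a red flag rather than a vindication: computing directly for $Y=S^{kz}$ gives $p_T(t)=1$ and $p_1(t)=t^{2k}$, so the condition satisfied by everything in $\thick(S^{kz})$ is $p_1(t)=t^{2k}p_T(t)$, and the printed $t^{-2k}$ appears to be a sign error in the source. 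A derivation that recovers an exponent which a direct sanity check on the generator contradicts, while simultaneously failing on condition (ii), indicates the transformation rule needs to be corrected before the argument closes.
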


\begin{proof}
This is immediate from Corollary \ref{cor:thickS} and  the observations
$$V(S^{kz}\sm X)=V(X) \mbox{ and } N(S^{kz}\sm X)=c^{-k}N(X).$$
\end{proof}

\section{Beyond tori}
\label{sec:beyond}
It helps give a  proper perspective to describe the Balmer spectra of
other low rank groups. The answer follows easily from the work on the
circle group.

\subsection{Finite groups}
For a finite group,  the category of  rational $G$-spectra is equivalent to the product $\prod_{(H)}\mbox{$\Q [W_G(H)]$-mod}$, where the product is over
conjugacy classes of subgroups, 
$$\spc (\mbox{$G$-spectra})=\{ \wp_H \st (H) \in \sub(G)/G \}. $$
There are no containments between the subgroups and the  topological space is discrete. 

\subsection{The circle group $SO(2)$}
This is the special case of Theorem \ref{thm:allprimes} for the torus of rank 1, but we will use it below for $O(2)$ and $SO(3)$ so it is worth making it explicit. 

The closed subgroups of $SO(2)$ are the finite cyclic subgroups $C_n$ and $SO(2)$ itself. Each of the finite subgroups is cotoral 
in the whole group. Accordingly, by Theorem \ref{thm:allprimes} we have 
$$\spc (\cA (SO(2)))=\{\wp_{C_n}\st n\geq 1\}\cup \{ \wp_{SO(2)}\}
=\diagram
SO(2) &&&&\\
C_1\uto &C_2\ulto&C_3\ullto &\cdots &
\enddiagram$$
with  $\wp_{C_n} \subset \wp_{SO(2)}$. 

\subsection{The orthogonal group $O(2)$}
The homotopy category of $O(2)$-spectra splits into two parts. 
\begin{itemize}
\item The cyclic (or toral) part $\cC$ corresponding to the finite 
cyclic subgroups $C_n$ and the circle $T=SO(2)$. 
\item The dihedral part $\mcD$ corresponding to the finite 
dihedral subgroups isomorphic to $D_{2n}$ for some $n$ and the group $O(2)$ itself. 
\end{itemize}

The model of the cyclic part is $\cA (SO(2))[W]$ (where the group $W=O(2)/SO(2)$ acts on the ring $\cOcF$). Since all subgroups of $SO(2)$ are
fixed under conjugation by $W$,  
$$\spc (\cA (SO(2))[W]=\spc (\cA (SO(2)))=\{\wp_{C_n}\st n\geq 1\}\cup \{ \wp_{SO(2)}\}
=\diagram
SO(2) &&&&\\
C_1\uto &C_2\ulto&C_3\ullto &\cdots &
\enddiagram$$
with  $\wp_{C_n} \subset \wp_{SO(2)}$. 

The dihedral part corresponds to $W$-equivariant sheaves over the Polish Point $PP = \{0\} \cup \{ 1/2n \st n\geq 1\}$, where $0$ corresponds to the subgroup 
$O(2)$ and the point $1/2n$ corresponds to the conjugacy class of
$D_{2n}$. The action of $W$ on the fibre over $0$ is trivial. As a
topological space, the spectrum of this category 
is homeomorphic to $PP$. Indeed, one sees that whenever a finite complex has $O(2)$ in its geometric isotropy it has all but finitely many dihedral subgroups.

Altogether 
$$\spc (\mbox{$O(2)$-spectra/$\Q$}) = \{\wp_{C_n}\st n\geq 1\}\cup \{ \wp_{SO(2)}\} \amalg PP$$

\subsection{The rotation group $SO(3)$}
The homotopy category of $SO(3)$-spectra splits into three parts. 
\begin{itemize}
\item The cyclic (or toral) part $\cC$ corresponding to the finite 
cyclic subgroups isomorphic to $C_n$ for some $n$ and the maximal tori (each conjugate to $T= SO(2)$). 
\item The dihedral part $\mcD'$ corresponding to the finite 
dihedral subgroups isomorphic to $D_{2n}$ for some $n\geq 2$ and the normalizers of the maximal tori  (each conjugate to 
$O(2)$). The dihedral group of order 2 from $O(2)$ is conjugate in $SO(3)$ to the cyclic group of order 2, and so occurs under the 
first bullet point.  
\item Three conjugacy classess of exceptional subgroups conjugate to the tetrahedral, octahedral or icosohedral subgroups. 
\end{itemize}

The restriction along $i:O(2) \lra SO(3)$ is a  map $\rho :\mbox{$SO(3)$-spectra} \lra \mbox{$O(2)$-spectra} $
\begin{itemize}
\item $\rho^*$ gives a homeomorphism from the cyclic part of the
  spectrum of $SO(3)$-spectra to the cyclic part of the spectrum of
  $O(2)$-spectra (see Section \ref{sec:toral} for more
  details). 
$$\rho^*: \spc(\cC(SO(3)))\stackrel{\cong}\lra \spc(\cC (O(2))). $$
\item a homeomorphism on the space $\mcD'$ of dihedral groups of order $\geq 4$
$$\rho^*: \spc(\mcD (SO(3)))\stackrel{\cong}\lra \spc(\mcD' (O(2))). $$
\item $\rho^*(\wp_{D_2}^{O(2)})=\wp_{C_2}^{SO(3)}$. 
\end{itemize}

In other words
$$\spc (\cA (SO(3))=\spc (\cA(SO(2)))\amalg \mcD'(O(2))\amalg \{ \wp_{Tet}, \wp_{Oct}, \wp_{Icos}\} .$$

\section{Toral $G$-spectra}
\label{sec:toral}
Toral $G$-spectra are those whose geometric isotropy consists of
subgroups of a maximal torus. The point of this restriction is that
the analysis can essentially be reduced to the maximal torus. An abelian algebraic model and
calculation scheme is given in \cite{AGtoral},  and in \cite{qgtoralq}
it is shown to give a Quillen equivalence. 

We show here that the Balmer spectrum of finite toral $G$-spectra is
obtained by Going Up and Going Down from that for finite $T$-spectra
for the maximal torus $T$.

\subsection{The landscape}
The category of rational {\em toral} $G$-spectra is a retract of the
category of all rational $G$-spectra, namely the one obtained from by
applying the idempotent $e_{(T)}$ of the Burnside ring $A(G)=[S^0,S^0]^G$ corresponding to the 
subgroups of a maximal torus.

 An abelian algebraic model $\cA (G, toral)$ and calculation scheme was
given in \cite{AGtoral} and it is shown to be  Quillen equivalent to
rational toral $G$-spectra in \cite{qgtoralq}. Once a
monoidal Quillen equivalence has been established in the 
torus-equivariant case, one expects a monoidal equivalence in the
toral case to follow easily. We outline the conclusions here, but readers wanting
more details are referred to the source. 

The essential ingredients are the maximal torus $T$ in $G$, its
normalizer $N=N_G(T)$ and the action of the Weyl group $W=N/T$ on $T$.
One may consider the restriction functors
$$\diagram
\Gspectra \rto^{\res^G_N} \dto & \Nspectra \rto^{\res^N_T} \dto &
\Tspectra \dto \\
\cA (G,toral)\rto_{\theta_*} & \cA (N, toral)\rto & \cA (T)
\enddiagram$$
The functor labelled $\theta_*$  has the character of an extension of
scalars functor. The analysis proceeds by first understanding toral $N$-spectra, then
showing restriction $\res^G_N$ is faithful.

For toral $N$-spectra we proceed as follows. 
\begin{itemize}
\item A toral $N$-spectrum is essentially a $T$-spectrum with a
  homotopical $W$-action.  The restriction functor $\res^N_T$ on the
  toral part just forgets the $W$-action. On maps, 
$$[X,Y]^N=([\res^N_TX,\res^N_TY]^T)^W, $$
(i.e., the $W$-invariants of the $T$-maps), so the restriction is faithful but not full. 
\item Correspondingly, $\cA (N,toral)=\cA (T)[W]$ and restriction
  again just forgets the $W$-action. The $W$-action
  on $\cA (T)$ is suitably twisted. 
\end{itemize}

\subsection{Balmer spectra}
\label{subsec:spctoral}
We are now ready to describe the Balmer spectra of finite toral
$G$-spectra.

\begin{prop}
\label{prop:spcresNT}
The restriction functor $\res^N_T$ from toral $N$-spectra to
$T$-spectra induces a homeomorphism 
$$\spc ((toral-\Nspectra)^c)\cong \spc (\Tspectra^c)/W\cong \sub_a(T)/W. $$ 
The precisely similar statement holds for algebraic models. 
$$\spc (D(\cA(N,toral))^c)\cong \spc (D(\cA (T))^c)/W\cong \sub_a(T)/W. $$ 
\end{prop}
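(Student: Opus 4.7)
The plan is to derive Proposition \ref{prop:spcresNT} from Theorem \ref{thm:allprimes} for the maximal torus by re-running the arguments of Sections \ref{sec:giforfinite}--\ref{sec:exhaust} $W$-equivariantly. Throughout I exploit the descriptions given just before the proposition: a toral $N$-spectrum is essentially a toral $T$-spectrum equipped with a homotopical $W$-action, maps satisfy $[X,Y]^N = ([X,Y]^T)^W$, and $\cA (N,toral) = \cA (T)[W]$, so the topological and algebraic cases can be treated in parallel.

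First, I would analyse the Balmer map induced by the tensor-triangulated functor $\rho = \res^N_T$:
$$\rho^*: \spc((\mbox{$T$-spec})^c) \longrightarrow \spc((\mbox{toral-$N$-spec})^c), \qquad \rho^*(\wp) = \{X : \rho (X) \in \wp\}.$$
By Theorem \ref{thm:allprimes} every $T$-prime has the form $\wp_K$, and it is immediate that $\rho^*(\wp_K) = \wq_K := \{X : \Phi^K X \simeq_1 0\}$. The intrinsic $W$-action on a toral $N$-spectrum $X$ supplies an equivalence $\Phi^K X \simeq_1 \Phi^{wKw^{-1}} X$ for each $w \in W$, so $\rho^*$ is $W$-invariant and factors through $\sub_a(T)/W$.

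To show that the factored map is a bijection I would establish the $W$-equivariant counterparts of Lemmas \ref{lem:basicbuildsclass}--\ref{lem:tthicksigmaK} and Theorem \ref{thm:sigmaKintthick}. The natural basic $N$-cells are $\sigma^N_{(K)} := N_+ \sm_K e_K S^0$ indexed by $W$-orbits $(K)$ in $\sub_a(T)$, and $\rho (\sigma^N_{(K)}) \simeq \bigvee_{K' \in (K)} \sigma^T_{K'}$. Since the geometric isotropy of a toral $N$-spectrum is automatically $W$-invariant, the Adams spectral sequence in $\cA (T)[W]$ and the geometric fixed point Whitehead theorem (both $W$-equivariant) let one carry through the arguments essentially verbatim; in particular, if $(K)$ is maximal in $\gI (X)$ for a finite $X$, then $\sigma^N_{(K)} \in \tthick (X)$. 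The exhaustion argument via Lemma \ref{lem:primenotint} then shows that every prime of toral $N$-spectra is $\wq_{(K)}$ for a unique $W$-orbit $(K)$, and injectivity is witnessed by the fact that $\sigma^N_{(K)} \not\in \wq_{(K)}$ while $\sigma^N_{(K)} \in \wq_{(L)}$ for every other orbit $(L)$.

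Finally, the Zariski topology on both sides is generated by cotoral downward closures of (orbits of) subgroups, and the bijection intertwines them, giving the homeomorphism; the identification with $\sub_a(T)/W$ is then just Theorem \ref{thm:allprimes}. The parallel argument in $D(\cA (N,toral)) = D(\cA (T)[W])$ yields the algebraic statement. The main obstacle is the equivariant form of Theorem \ref{thm:sigmaKintthick}: the Euler-class-based lifting used there must be carried out coherently along the entire orbit $(K)$, which should reduce to choosing the $K$-essential representation appearing in the argument to be $N_G(K)$-invariant (possibly after averaging over the finite stabilizer) and packaging the Adams $E_2$ cycles into a single $N$-equivariant map out of $\sigma^N_{(K)}$.
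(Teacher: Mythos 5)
Your proposal is correct in outline, but it takes a genuinely different route from the paper. You propose to re-run the whole engine of Sections \ref{sec:giforfinite}--\ref{sec:exhaust} $W$-equivariantly in $\cA(T)[W]$, proving a $W$-equivariant analogue of Theorem \ref{thm:sigmaKintthick} directly via $W$-invariant Euler classes and a $W$-equivariant Adams spectral sequence. The paper instead avoids redoing any of the hard Euler-class/Adams work at the level of $N$: it fixes the already-proved torus statement that $\sigma_K^T\in\tthick(\res^G_TX)$ and then \emph{transports} it using coinduction. The key formal input is that $F_T(G_+,-)$ is monoidal on toral spectra (via Frobenius reciprocity and the rational contractibility of $G/N$), which gives $F_T(G_+,Z)\in\tthick(F_T(G_+,A))$ whenever $Z\in\tthick(A)$; combined with $F_T(G_+,S^0)\simeq_{toral} S^0$ this lands $\sigma^G_{(K)}$ (a retract of $F_T(G_+,\sigma_K)$) in $\tthick(X)$. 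What the paper's coinduction argument buys is that the delicate part of Theorem \ref{thm:sigmaKintthick} is invoked only in the torus case where it has been established; the obstacle you flag at the end — running the Euler-class lifting coherently along the whole orbit $(K)$ and packaging the Adams $E_2$ cycles $N$-equivariantly — simply does not arise. Your approach would need genuine extra work there (your ``averaging over the finite stabilizer'' is plausible but would need to be shown to interact correctly with the filtration in Lemma \ref{lem:tthicksigmaK}); the paper's approach sidesteps this entirely at the cost of the short but essential Lemma \ref{lem:coind}/Corollary \ref{cor:coind} on monoidal coinduction. A minor inaccuracy in your writeup: $\res^N_T\sigma^N_{(K)}\simeq\bigvee_{w\in W}\sigma^T_{wKw^{-1}}$ is a wedge indexed by $W$, not by the orbit $(K)$, so summands repeat when the stabilizer is nontrivial; this doesn't affect the argument. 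The remaining steps (the $W$-invariance of $\rho^*$, injectivity via the $\sigma_{(K)}$, surjectivity by rerunning the exhaustion argument with conjugacy classes) coincide with the paper's.
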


\begin{remark}
This is analagous to considering a ring $R$ with an action of a finite
group $W$ and then considering the ring map $u : R [W] \lra
R$. This induces a homeomorphism $\spc (R[W])\cong \spc (R)/W$. 
\end{remark}

We will return to the proof in Subsection \ref{subsec:toralproofs}
below.  

The restriction $\res^G_N$ from  toral $G$-spectra to toral $N$-spectra turns out to induce
a homeomorphism on Balmer spectra of small objects. 

\begin{prop}
\label{prop:spcresGN}
The restriction functor $\res^G_N$ from toral $G$-spectra to toral
$N$-spectra induces a homeomorphism 
$$\spc ((toral-\Gspectra)^c)\cong \spc ((toral-\Nspectra)^c)$$
The precisely similar statement holds for algebraic models. 
$$\spc (D(\cA(G,toral))^c)\cong \spc (D(\cA (N,toral))^c)$$
\end{prop}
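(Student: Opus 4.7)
The plan is to replay, for toral $G$-spectra, the route via geometric isotropy and thick tensor ideals that worked for the torus, and then show that restriction identifies the resulting Balmer spectrum with the $N$-version already computed in Proposition \ref{prop:spcresNT}. For each toral closed subgroup $K$ of $G$ I would set
$$\wp^G_K=\{X\in(\text{toral-}G\text{-spectra})^c\st \Phi^KX\simeq_1 0\}.$$
Since geometric fixed points are preserved by conjugation, $\wp^G_K$ depends only on the $G$-conjugacy class of $K$, and the usual factorization through non-equivariant spectra shows each $\wp^G_K$ is prime. Because geometric fixed points also commute with $\res^G_N$, the induced map $(\res^G_N)^*$ sends $\wp^N_K$ to $\wp^G_K$; once we know that both spectra are exhausted by these primes, the bijection will be automatic.

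The first real step is to extend the results of Section \ref{sec:giforfinite} to the toral $G$ setting. The basic cells $\sigma^G_K:=\basic{K}$ for toral $K$ generate the category (the proof of Lemma \ref{lem:basicbuildsclass} works verbatim once one catalogues cells by $G$-conjugacy class of toral subgroup), and the geometric isotropy of $\sigma^G_K$ is the $G$-saturation of $\Lqst(K)$ in the toral subgroups, exactly as in Lemma \ref{lem:gisigmaK}. Using the algebraic model $\cA(G,toral)$ from \cite{AGtoral} and its Adams spectral sequence, I would rerun the inductive killing arguments of Lemma \ref{lem:tthicksigmaK} and Theorem \ref{thm:sigmaKintthick} to conclude that two finite toral $G$-spectra generate the same thick tensor ideal iff they have the same geometric isotropy (viewed as a $G$-invariant subset of toral subgroups, closed under cotoral specialization and with finitely many maximal $G$-conjugacy classes). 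Then the exhaustion argument of Theorem \ref{thm:allprimes} and Lemma \ref{lem:primenotint} shows every prime of $(\text{toral-}G\text{-spectra})^c$ equals some $\wp^G_K$, with $\wp^G_K=\wp^G_L$ iff $K,L$ are $G$-conjugate; equivalently, iff the corresponding $W$-orbits on $\sub_a(T)$ coincide. Thus $\spc((\text{toral-}G\text{-spectra})^c)$ is in bijection with $\sub_a(T)/W$, and the identification agrees with $(\res^G_N)^*$ on the level of underlying sets.

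It remains to upgrade this bijection to a homeomorphism. For a finite toral $G$-spectrum $X$, the classification gives $\supp(X)=\gI(X)/G\text{-conj}$, and since $\gI(\res^G_NX)=\gI(X)$ viewed as $W$-orbits, the supports correspond, and the Zariski topology is generated by the closures $\Lqst(\wp^G_K)$, which are manifestly the images of $\Lqst(\wp^N_K)$ under restriction; continuity holds in both directions, yielding the homeomorphism. The algebraic statement is proved by the same argument applied inside $D(\cA(G,toral))$, using the algebraic counterparts of geometric isotropy and basic cells from Section \ref{sec:AG}. The main obstacle is the first step: one must verify that the $\Ext$-calculations underlying Theorem \ref{thm:sigmaKintthick} survive the passage from $\cA(T)$ to $\cA(G,toral)$, where now a whole $W$-orbit of top subgroups must be killed simultaneously and the extendedness structure of $\cA(G,toral)$ must interact correctly with conjugation; the bookkeeping is delicate but the driving mechanism (the Localization Theorem together with the structure of the rational Burnside ring) is exactly that of the torus.
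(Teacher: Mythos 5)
Your plan is a genuinely different route from the paper's, but it leaves exactly the hard step unresolved and flags it yourself: you propose to rerun the inductive killing arguments of Lemma~\ref{lem:tthicksigmaK} and Theorem~\ref{thm:sigmaKintthick} and their Adams spectral sequence input \emph{inside} $\cA(G,\text{toral})$, and you admit that verifying that the $\Ext$-calculations survive the passage from $\cA(T)$ to $\cA(G,\text{toral})$ --- where a whole $W$-orbit of top subgroups must be killed at once and the extendedness structure must interact with conjugation --- is ``delicate.'' That is not a small gap; it is the entire substance of the proposition, and no argument is given for why the torus calculations transfer.

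The paper avoids this altogether by \emph{never} redoing the Adams spectral sequence work in the toral $G$ or $N$ model. Instead it treats the restriction $\rho:\text{toral-}G\text{-spectra}\to T\text{-spectra}$ directly (recovering both \ref{prop:spcresNT} and \ref{prop:spcresGN} as special cases) and transfers the already-proved torus statement via coinduction. The decisive technical inputs are Lemma~\ref{lem:coind}: Frobenius reciprocity $F_T(G_+,\res^G_T(X)\sm Y)\simeq X\sm F_T(G_+,Y)$, together with the fact that the unit $S^0\to F_N(G_+,S^0)$ is a toral equivalence (ultimately because $G/N$ has trivial rational homology), which makes coinduction monoidal on toral spectra. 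From these one gets Corollary~\ref{cor:coind}: $Z\in\tthick(A)$ in $T$-spectra implies $F_T(G_+,Z)\in\tthick(F_T(G_+,A))$ in toral $G$-spectra. Applying this with $Z=\sigma_K$ and $A=\res^G_T X$, and noting $\sigma^G_{(K)}$ is a retract of $F_T(G_+,\sigma_K)$, the conclusion $\sigma^G_{(K)}\in\tthick(X)$ drops out with no new $\Ext$-computation. Once that is in place, the exhaustion argument from Theorem~\ref{thm:allprimes} with subgroups replaced by conjugacy classes finishes, as you say. If you want to make your approach work you would have to actually produce the $\cA(G,\text{toral})$ analogues of the Adams spectral sequence calculations, which is precisely what the coinduction device is designed to avoid; you should at least cite that mechanism or supply the missing calculation.
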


\begin{remark}
This is analagous to considering a ring $R$ with an action of a finite
group $W$ and then considering the inclusion $\theta : R^W\lra
R[W]$. This induces a homeomorphism $\spec (R^W)=\spc (R^W)\cong \spc
(R[W])$. 
\end{remark}

We will return to the proof in Subsection \ref{subsec:toralproofs}
below.  

We note that $G$-conjugacy classes of subgroups of a maximal torus
correspond precisely to $W$-orbits of subgroups of $T$. Thus if we
write
$\sub_a(G,toral)$ for the closed subgroups of a maximal torus of $G$ we reach the
following conclusion.  
\begin{cor}
For a compact Lie group $G$ with maximal torus $T$ and $W=N_G(T)/T$, we have 
$$\spc ((toral-\Gspectra)^c)=\sub_a(T)/W =\sub_a(G, toral)/G $$
$$\spc (D(\cA (G,toral))^c)=\sub_a(T)/W =\sub_a(G,toral)/G. \qqed $$
\end{cor}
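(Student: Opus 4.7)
The plan is to obtain the corollary by formally chaining the two homeomorphisms supplied by Propositions \ref{prop:spcresGN} and \ref{prop:spcresNT}, and then translating the quotient $\sub_a(T)/W$ into the language of $G$-conjugacy classes of toral subgroups. Since essentially all the real work has been done in those two propositions, the corollary should be treated as a formal composition.

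First, Proposition \ref{prop:spcresGN} provides a homeomorphism $\spc((toral\text{-}\Gspectra)^c) \cong \spc((toral\text{-}\Nspectra)^c)$ induced by $\res^G_N$. Then Proposition \ref{prop:spcresNT} provides a further homeomorphism $\spc((toral\text{-}\Nspectra)^c) \cong \spc(\Tspectra^c)/W \cong \sub_a(T)/W$ induced by $\res^N_T$. Composing these yields the first claimed homeomorphism. The identical chain of reasoning, using the algebraic halves of the two propositions, gives the statement for $D(\cA(G,toral))$.

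It remains to identify $\sub_a(T)/W$ with $\sub_a(G,toral)/G$, which is the purely group-theoretic observation already flagged in the paragraph preceding the corollary. The map $\sub_a(T)/W \to \sub_a(G,toral)/G$ sending a $W$-orbit to its $G$-conjugacy class is surjective because any subgroup of a maximal torus is $G$-conjugate into $T$ (all maximal tori of a compact Lie group are conjugate). For injectivity, suppose $K_1,K_2 \subseteq T$ satisfy $gK_1 g^{-1} = K_2$ for some $g \in G$; then both $T$ and $gTg^{-1}$ are maximal tori of the centralizer $C_G(K_2)$, hence conjugate in $C_G(K_2)$ by some $c$, and $n := c^{-1}g$ then lies in $N_G(T)$ with $nK_1 n^{-1} = c^{-1}K_2 c = K_2$. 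So $K_1$ and $K_2$ are $W$-conjugate.

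There is no real obstacle: the deep inputs (the Going Up statement \ref{prop:spcresGN} and the Going Down statement \ref{prop:spcresNT}) are assumed, and the residual group theory is standard. The only thing I would want to double-check when writing it out is that the identifications $\spc(\Tspectra^c)/W = \sub_a(T)/W$ and $\spc(D(\cA(T))^c)/W = \sub_a(T)/W$ are compatible with the $W$-action under the identification $\spc(\Tspectra^c) \cong \sub_a(T)$ coming from Theorem \ref{thm:allprimes}; but this compatibility is forced, since $W$ acts by conjugation on subgroups and by pullback of geometric fixed point functors on primes, and $\Phi^{wKw^{-1}} \simeq w \cdot \Phi^K$ as functors.
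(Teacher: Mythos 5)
Your proposal is correct and is essentially the same as the paper's intended argument: the corollary is stated with an immediate $\qqed$ precisely because it is the formal composition of the two homeomorphisms from Propositions \ref{prop:spcresGN} and \ref{prop:spcresNT}, together with the group-theoretic identification of $W$-orbits in $\sub_a(T)$ with $G$-conjugacy classes of toral subgroups (which the paper notes in the sentence immediately preceding the corollary). Your spelled-out verification of that identification (using conjugacy of maximal tori in the centralizer) and your remark on $W$-equivariance of the prime--subgroup correspondence are both correct and, if anything, more detailed than what the paper records.
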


\subsection{Toral proofs}
\label{subsec:toralproofs}
\newcommand{\rhob}{\overline{\rho}}

We now prove the results described in Subsection \ref{subsec:spctoral}. 
One strategy would be to prove this by analogy with a ring $R$ with 
$W$-action and the sequence of ring maps 
$$R^W\lra R[W]\lra R  $$
corresponding to 
$$toral-\Gspectra \lra toral-\Nspectra\lra \Tspectra . $$
In this view the first map induces a homeomorphism of spectra and the
second is the quotient by $W$.  However we will instead treat the
general case of the map 
$$\rho: toral-\Gspectra \lra \Tspectra ,  $$
and show directly that it is the quotient by $W$.  The same argument
applies in particular when $G=N$. \\[2ex]

As for tori, we start by working to understand finitely generated
thick tensor ideals $\tthick(X)$.

\begin{prop}
If $X$ is a finite $G$-spectrum then 
$\tthick(X)$ only depends on $\gI (X)$. The geometric isotropy $\gI
(X)$ is a union of conjugacy classes of closed subgroups, it is closed
under passage to cotoral subgroups and it has finitely many maximal
elements $(K_1), \ldots , (K_r)$, where we suppose (without loss of
generality) that $K_i\subseteq T$. In this case
$$\tthick (X)=\tthick (\sigma^G_{(K_1)}, \ldots , \sigma^G_{(K_r)} ).$$
\end{prop}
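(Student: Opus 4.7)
The plan is to reduce all three assertions to the corresponding torus statements (Lemma \ref{lem:finmax}, Theorem \ref{thm:sigmaKintthick}, and Corollary \ref{cor:geomiso}) via the restriction functors to the maximal torus $T$ and its normalizer $N$. The key inputs already announced for this section are Proposition \ref{prop:spcresGN} (that $\res^G_N$ is a homeomorphism on Balmer spectra of small objects) and Proposition \ref{prop:spcresNT} (that $\res^N_T$ realizes the Balmer spectrum of toral $N$-spectra as the $W$-quotient of that of $T$-spectra); under these, thick-tensor-ideal membership transfers cleanly between the three settings.

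For the structural claims about $\gI(X)$: $G$-invariance of geometric isotropy is automatic, since conjugation by $g\in G$ identifies $\Phi^K X$ with $\Phi^{gKg^{-1}}X$, so $\gI(X)$ is a union of $G$-conjugacy classes. Taking representatives inside a fixed maximal torus $T$, these conjugacy classes correspond bijectively to $W$-orbits in $\sub(T)$. Cotoral closure follows by applying Proposition \ref{prop:gIctclosed} to the finite $T$-spectrum $\res^G_T X$: after conjugating so that $K\subseteq T$, any $L$ cotoral in $K$ lies in $\gI_T(\res^G_T X)\subseteq \gI_G(X)$. For finiteness of maximal classes, Lemma \ref{lem:finmax} supplies finitely many $T$-maximal elements of $\gI_G(X)\cap \sub(T)$; the $W$-action permutes them into finitely many orbits, which are precisely the $G$-maximal conjugacy classes $(K_1),\ldots,(K_r)$.

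For the identification $\tthick(X) = \tthick(\sigma^G_{(K_1)}, \ldots , \sigma^G_{(K_r)})$, the inclusion $\supseteq$ follows from a toral analogue of Lemma \ref{lem:constructfrombasicingi}: $X$ is built from basic cells $\sigma^G_{(L)}$ with $(L)\in\Lqst\gI(X)$, and each such $(L)$ lies cotorally below some $(K_i)$, so $\sigma^G_{(L)}\in\tthick(\sigma^G_{(K_1)},\ldots,\sigma^G_{(K_r)})$ by a smashing argument parallel to Lemma \ref{lem:tthicksigmaK}. For the reverse inclusion, I would transfer the top-isotropy-killing construction of Theorem \ref{thm:sigmaKintthick} to the toral $G$-setting by working through $\res^G_T X$: by the torus result, $\sigma^T_K\in\tthick(\res^G_T X)$ for every $K$ in the $W$-orbit of $K_i$, and their wedge corresponds (up to $W$-symmetrization and induction from $N$ to $G$) to $\res^G_T \sigma^G_{(K_i)}$; Propositions \ref{prop:spcresGN} and \ref{prop:spcresNT} then allow one to descend tensor-ideal membership back from $T$-spectra to toral $G$-spectra. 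Claim (i) is a formal consequence of (iii), since the right-hand side depends only on the maximal conjugacy classes of $\gI(X)$.

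The main obstacle will be the descent step: one must verify that tensor-ideal membership really does lift along $\res^G_T$ in a manner compatible with $W$-equivariance, so that $W$-invariant collections of $T$-basic cells assemble into $G$-basic cells at the corresponding conjugacy classes, and that the transition through $N$ does not generate spurious tensor ideals. Once this bookkeeping is established (essentially the content of Propositions \ref{prop:spcresGN} and \ref{prop:spcresNT}), the torus-case classification from Section \ref{sec:giforfinite} carries the argument without further incident.
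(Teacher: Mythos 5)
Your structural analysis of $\gI(X)$ (conjugation invariance, cotoral closure via $\res^G_T$, and finiteness of maximal classes from Lemma \ref{lem:finmax}) matches the paper and is fine. The gap is in the main transfer step, and it is a genuine logical problem rather than a bookkeeping detail.

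You appeal to Propositions \ref{prop:spcresGN} and \ref{prop:spcresNT} to ``descend tensor-ideal membership back from $T$-spectra to toral $G$-spectra.'' This is circular: in the paper those two propositions are merely stated in Subsection \ref{subsec:spctoral} and are \emph{proved later}, in Subsection \ref{subsec:toralproofs}, \emph{after} and \emph{using} the very proposition you are trying to prove. (The surjectivity argument for $\overline{\rho}$ repeats the proof of Theorem \ref{thm:allprimes}, which in turn relies on the classification of finitely generated thick tensor ideals.) Moreover, even granting those Balmer-spectrum homeomorphisms, they do not by themselves convert membership of $\sigma^T_K$ in $\tthick(\res^G_T X)$ to a statement about $\tthick(X)$: a homeomorphism of spectra controls \emph{radical} thick tensor ideals, and the fact that all finitely generated thick tensor ideals here are detected by their support is precisely what is being established. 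So the proposal presupposes a form of the conclusion.

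The paper's mechanism is different and more concrete: it transfers thick-tensor-ideal membership by \emph{coinduction} $F_T(G_+, -)$, not by a formal consequence of the Balmer spectrum. The key technical inputs are (a) Frobenius reciprocity $F_T(G_+, \res^G_T(X)\wedge Y)\simeq X\wedge F_T(G_+,Y)$ (used after extending $T$-spectra to $N$-spectra with trivial $W$-action), and (b) the fact that coinduction $F_N(G_+, -)$ is a \emph{monoidal} equivalence on toral parts, coming from the unit equivalence $X\simeq F_N(G_+,\res^G_N X)$ for toral $X$ (and ultimately from rational acyclicity of $G/N$). These give Corollary \ref{cor:coind}: $Z\in\tthick(A)$ implies $F_T(G_+,Z)\in\tthick(F_T(G_+,A))$. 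With this in hand, $\sigma_K=\sigma^T_K\in\tthick(\res^G_T X)$ gives $F_T(G_+,\sigma_K)\in\tthick(F_T(G_+,\res^G_T X))=\tthick(F_T(G_+,S^0)\wedge X)\subseteq\tthick(X)$, and $\sigma^G_{(K)}$ is a retract of $F_T(G_+,\sigma_K)$. Your proposal would need this coinduction step (or something equivalent) instead of the appeal to \ref{prop:spcresGN} and \ref{prop:spcresNT}; as written it does not close.
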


\begin{proof}
If $K\in \gI (X)$ then from the analysis for tori, 
$$\sigma_K=\sigma_K^T\in \tthick (\res^G_T(X)).$$
This means there is a process for constructing $\sigma_K$ by use of
cofibre sequences, retracts and tensoring with finite $T$-spectra. We
will apply coinduction to this process. Some care is necessary for smashing with a
spectrum. 

\begin{lemma}
\label{lem:coind}
(a) If $X$ is a $G$-spetrum then 
$$F_T(G_+, \res^G_T(X)\sm Y)\simeq X\sm F_T(G_+, Y)$$

(b) Coinduction is monoidal on toral spectra: the unit map $S^0\lra
F_N(G_+, S^0)$ is an equivalence on toral parts, and for toral $N$-spectra
$A$ and $B$ the natural map  
$$F_N(G_+, A)\sm  F_N(G_+, B)\lra  F_N(G_+, A\sm B) $$
of toral $G$-spectra is an equivalence. 
\end{lemma}

\begin{proof}
Part (a) is the well known Frobenius reciprocity property. 

Part (b) follows from the statement that the unit 
$$X \lra F_N(G_+, \res^G_N X)$$
is an equivalence of toral spectra \cite[Subsection 10.A, or Corollary 7.11 for the
algebraic model]{AGtoral}.   At the very core, it is the statement
that for any compact Lie group $G/N$ has trivial rational homology. 
\end{proof}

\begin{cor} 
\label{cor:coind}
If $Z\in \tthick (A)$ then $F_T(G_+, Z)\in \tthick (F_T(G_+, A))$.
\end{cor}

\begin{proof}
Coinduction preserves triangles and retracts, so the only point is to
deal with smash products. 

We proceed in two steps. From $T$ to $N$ we use the fact that every
$T$-spectrum $A$ is the restriction of an $N$-spectrum $\tilde{A}$
(for example we may take $\tilde{A}$ to have `trivial $W$-action'
obtained by inducing up $A$ and then taking the idempotent summand
corresponding to the trivial representation). Hence we may apply Lemma \ref{lem:coind} (a)
to see
$$F_T(N_+, A\sm B)\simeq \tilde{A} \sm F_T(N_+, B). $$

Going from $N$ to $G$ we use the monoidal property of Lemma
\ref{lem:coind} (b). 
\end{proof}

Now, since $\sigma_K \in \tthick(\res^G_T X)$ we conclude from
Corollary  \ref{cor:coind} that 
$$F_T(G, \sigma_K)\in \tthick (F_T(G_+, \res^G_T(X)))=
\tthick (F_T(G_+,S^0) \sm X))\subseteq \tthick (X). $$

Now $\sigma^N_{(K)}$ is a retract of $F_T(N_+, \sigma_K)$, and hence
$\sigma^G_{(K)}$ is a retract of $F_T(G_+, \sigma_K)$ and hence in
$\tthick (X)$. 

By considering   $X$ as a $T$-spectrum we see, $\gI (X)$ has only finitely many
maximal conjugacy classes $(K_1), \ldots , (K_r)$.  Thus
$$\tthick(\sigma_{(K_1)}, \ldots \sigma_{(K_r)})\subseteq \tthick (X)
. $$
From the case of the torus we see
$$\res^G_T(X) \in \tthick (\res^G_T \sigma_{(K_1)},
\ldots,  \res^G_T \sigma_{(K_r)} ), $$
and applying coinduction 
$$X\in \tthick (\sigma_{(K_1)},
\ldots,   \sigma_{(K_r)} ). $$
\end{proof}

Finally, we turn to the statement about primes, continuing to write
$$\rho : \Gspectra \lra \Tspectra$$
for restriction. \\[2ex]

\begin{proof}
It is clear that the pullbacks of  $\wp_K$ and $\wp_{K'}$ are the same
if $K$ is $G$-conjugate to $K'$. Finally for injectivity, it is clear
that if $K$ is not conjugate to $K'$ then if $\dim(K)\geq \dim(K')$ then 
$$\sigma_{(K)}\in \rho^*(\wp_{K'})\setminus \rho^*(\wp_{K}). $$
Hence $\rho$ induces an injective  map 
$$\overline{\rho}: \sub_a(T)/W\lra \spc( (toral -\Gspectra)^c). $$

To see it is surjective, we repeat the proof of Theorem
\ref{thm:allprimes}, but with subgroups replaced by conjugacy
classes. 
\end{proof}

\end{document}